\documentclass[10pt]{amsart}
\usepackage{ amssymb,latexsym, amscd,pb-diagram}
\usepackage{sseq}
\usepackage[all]{xy}
\usepackage[square, numbers]{natbib}
\usepackage{graphicx}
\usepackage{mathrsfs}
\usepackage{color}
\usepackage{amsmath}
\usepackage{tikz}
\usepackage{dsfont}
\usetikzlibrary{matrix}


\vfuzz2pt 

 \newtheorem{theorem}{Theorem}[section]
 \newtheorem{cor}[theorem]{Corollary}
  
 \newtheorem{lemma}[theorem]{Lemma}
 \newtheorem{proposition}[theorem]{Proposition}
 \theoremstyle{definition}
 
 \theoremstyle{definition}

 \newtheorem{rem}[theorem]{Remark}
 \numberwithin{equation}{section}

\newcommand{\ben}{\begin{equation}}
\newcommand{\een}{\end{equation}}


\newcommand{\integer}{\ensuremath{{\mathbb Z}}}

\newcommand{\complex}{\ensuremath{{\mathbb C}}}

\newcommand{\Cx}{\ensuremath{{\mathbb C}^*}}



\newcommand{\DD}{{\mathcal D}}

\newcommand{\ZZ}{{\mathcal Z}}

\newcommand{\VV}{{\mathcal V}}

\newcommand{\CC}{\mathcal{C}}

\newcommand{\MM}{\mathcal{M}}

\newcommand{\Hom}{\mathrm{Hom}}

\newcommand{\Maps}{\mathrm{Maps}}

\newcommand{\To}{\longrightarrow}

\newcommand{\IF}{{\mathbb{F}}}

\newcommand{\trl}{{\vartriangleleft}}
\newcommand{\trr}{{\vartriangleright}}

\newcommand{\Map}{\ensuremath{{\mathrm{Map}}}}

\newcommand{\tensor}{\ensuremath{\otimes}}



\newcommand{\Tot}{\mbox{\rm Tot\,}}

\newcommand{\Obj}{\mbox{\rm Obj\,}}

\newcommand{\Fun}{\operatorname{Fun}}

\newcounter{commentcounter}



\begin{document}

\title[On the Classification of Pointed Fusion Categories]{On the Classification of Pointed Fusion Categories up to weak Morita Equivalence}

\thanks{The author acknowledges the financial support of the Max Planck Institute of Mathematics in Bonn, Germany, and 
of COLCIENCIAS through grant number FP44842-617-2014 of the Fondo Nacional de Financiamiento para la Ciencia, la Tecnolog\'ia
 y la Inovaci\'on.}
\author{Bernardo Uribe}
\address{Departamento de Matem\'{a}ticas y Estad\'istica, Universidad del Norte, Km.5 V\'ia Antigua a Puerto Colombia, 
Barranquilla, Colombia.}
\email{bjongbloed@uninorte.edu.co}
\subjclass[2010]{
(primary) 18D10, (secondary) 20J06}
\keywords{Tensor Category, Pointed Tensor Category, Weak Morita Equivalence, Fusion Category.}
\begin{abstract}
A pointed fusion category is a rigid tensor category with finitely many isomorphism classes of simple objects which moreover
are invertible.  Two tensor categories $\CC$ and $\DD$ are {weakly Morita equivalent} if there exists an
  indecomposable right module category $\MM$ over $\CC$ such that $\Fun_\CC(\MM,\MM)$ and $\DD$
   are tensor equivalent. We use the Lyndon-Hochschild-Serre spectral sequence associated
   to abelian group extensions to give necessary and sufficient conditions in terms of cohomology classes for two pointed fusion categories
   to be weakly Morita equivalent. This result may permit to classify the equivalence classes of pointed fusion categories of any given global dimension.

\end{abstract}

\maketitle

\section*{Introduction}

Pointed fusion categories are rigid tensor categories with finitely many isomorphism classes of simple objects with the property
that all simple objects are invertible. Any pointed fusion category $\CC$ is equivalent to the fusion category $Vect(G,\omega)$ of
complex vector spaces graded by the finite group $G$ together with the associativity constraint defined by the 3-cocycle
$\omega \in Z^3(G,\Cx)$. Whenever we have a right module category $\MM$ over $\CC$ we can define
the dual category $\CC_\MM^*:= \Fun_\CC(\MM,\MM)$ which becomes a tensor category via composition of functors. Whenever
$\CC$ is a fusion category and $\MM$ is an indecomposable fusion category, the dual category $\CC_\MM^*$ is also a fusion category \cite[\S 2.2]{Ost-2}. An indecomposable module category $\MM$ of $Vect(G,\omega)$ may be defined by $\MM=\MM(K, \mu)$
 where $K$ is the space of cosets $K :=A \backslash G$ for $A$ a subgroup of $G$ and 
   $\mu \in C^2(G, \Map(K, \Cx))$ is a cochain that satisfies the equation
  $   \delta_G \mu^{-1} = \omega$.
  Two tensor categories $\CC$ and $\DD$ are {{weakly Morita equivalent}} if there exists an
  indecomposable right module category $\MM$ over $\CC$ such that $\CC_\MM^*$ and $\DD$
   are tensor equivalent \cite[Def 4.2]{MugerI}. 
   
   Now, if we have two pointed fusion categories $Vect(G,\omega)$ and $Vect(\widehat{G},\widehat{\omega})$, what are the necessary and
   sufficient conditions for them to be weakly Morita equivalent? This question was raised in \cite{Davydov, Movshev}, it was
   answered by Davydov \cite[Cor. 6.2]{Davydov} for the case on which both $\omega$ and $\widehat{\omega}$ were trivial, and the general case was answered by Naidu in \cite[Theorem 5.8]{Naidu}  in terms of the properties that $A$, $\omega$ and $\mu$ need to satisfy. Nevertheless these conditions were given 
   in equations that a priori had no interpretation in terms of known cohomology classes.    

We continue the work started by Naidu in \cite{Naidu} and frame all the calculations done there in the language of the double 
complex associated to an abelian group extension which induces the Lyndon-Hochschild-Serre spectral sequence. By doing so we are able
to obtain in Corollary \ref{omega in 2,1 and 3,0}
cohomological conditions on $\omega$ in order for the tensor category $Vect(G,\omega)_{\MM(A \backslash G, \mu)}^*$ to be
 pointed, namely that $\omega$ must be cohomologous to a cocycle appearing
 in $C^{2,1}\oplus C^{3,0}$ of the double complex which induces
 the Lyndon-Hochschild-Serre spectral sequence associated to the extension
    $1 \to A \to G \to K \to 1$.

With the previous result at hand, we construct explicit representatives of $\omega$ and $\mu$ in terms of coordinates and 
we determine explicitly the groups $\widehat{G}$ and the cocycles $\widehat{\omega}$. The main result of this paper is Theorem
\ref{main theorem} in which we give the necessary and sufficient conditions for the categories
$Vect(H,\eta)$ and $Vect(\widehat{H},\widehat{\eta})$ to be weakly Morita equivalent. We may summarize
the conditions as follows: $Vect(H,\eta)$ and $Vect(\widehat{H},\widehat{\eta})$ are weakly Morita equivalent if and only if
there exist isomorphisms of groups 
 $\phi : A \rtimes_F K \stackrel{\cong}{\to} H$ and 
$ \widehat{\phi} :  K \ltimes_{\widehat{F}} {{\mathbb{A}}} \stackrel{\cong}{\to} \widehat{H}$
 for some finite group $K$ acting on the abelian group $A$,
 with $F \in Z^2(K, A)$ and $\widehat{F} \in Z^2(K, {{\mathbb{A}}})$ where ${{\mathbb{A}}} := \Hom(A, \Cx)$, such that
 both $[\widehat{F}]$ and $[F]$ survive respectively the LHS spectral sequence for the groups $A \rtimes_F K$ and $K \ltimes_{\widehat{F}} {{\mathbb{A}}}$, and such that $\phi^* \eta$ is cohomologous to
 $$  \omega((a_1,k_1),(a_2,k_2),(a_3,k_3)) := \widehat{F}(k_1,k_2)(a_3) \ \epsilon(k_1,k_2,k_3)$$
  and 
 $\widehat{\phi}^*\widehat{\eta}$ is cohomologous to 
$$  \widehat{\omega}((k_1, \rho_1),  (k_2,\rho_2),(k_3 ,\rho_3)) := \epsilon(k_1,k_2,k_3) \ \rho_1(F(k_2,k_3))$$
where $\epsilon: K^3 \to \Cx$ satisfies $\delta_K \epsilon = \widehat{F} \wedge F$. 

Theorem \ref{main theorem} may be used to determine the weak Morita equivalence classes of pointed fusion categories of a given
global dimension but the cohomological calculations can become very elaborate and are beyond the scope of this article.
Nevertheless in
section \S \ref{section examples} we include a calculation on which we show how Theorem
\ref{main theorem} can be used to show that there are only seven weak Morita equivalence classes of pointed fusion categories of
global dimension four, and in order to calculate the pointed fusion categories which are weakly Morita equivalent to $Vect(Q_8,\eta)$
for the quaternion group $Q_8$.

\section{Preliminaries} \label{section Preliminaries}
\subsection{Abelian group extensions} \label{subsection Abelian group extensions}
Consider the short exact sequence of finite groups
\begin{eqnarray} \label{extension of G by A and K}
1 \To A \To G \To K \To 1
\end{eqnarray}
with $A$ abelian. Consider $u : K \to G$ any section
of the projection map $p: G \to K, p(g)=(Ag)$ such that $u(1_K)=1_G$ and denote the right $G$-action on $K$ 
by $$k \trl g:= p((u(k)g)$$ for $k \in $ and $g \in G$. The elements $u(k)g$ and $u(k \trl g)$ differ by an element $\kappa_{k,g} \in A$ satisfying the equation
\begin{align}
u(k)g = \kappa_{k,g} u(k \trl g) \label{equation of kappa}
\end{align}
which furthermore satisfies the relation
$$\kappa_{k,g_1g_2}= \kappa_{k,g_1} \kappa_{k \trl g_1,g_2}$$
for $ k \in K$ and $g_1,g_2 \in G$.
  Since $A$ is an abelian normal subgroup $G$, there is an induced $K$-left action on $A$ by conjugation: $${}^ka: = u(k)a u(k)^{-1}$$ for $k \in K$ and $a \in A$.

Since the isomorphism class of the extension \eqref{extension of G by A and K} can be classified by the cohomology
class of the cocycle $F \in Z^2(K, A)$, i.e. a map $F : K \times K \to A$ such that
$$\delta_K F (k_1,k_2,k_3)= {}^{k_1}F(k_2,k_3)F(k_1k_2,k_3)^{-1}F(k_1,k_2k_3) F(k_1,k_2)^{-1}=1,$$
without loss of generality we will further assume that
$$G := A \rtimes_F K$$
where the product structure of $G$ is given by the formula
$$(a_1,k_1) (a_2,k_2) := (a_1 ({}^{k_1}a_2) F(k_1,k_2),k_1k_2).$$

With this explicit choice of the group $G$, we choose the function $u: K \to G$ to be $u(k):=(1_A,k)$ and therefore
we have that $$\kappa_{k_1,(a,k_2)}= {}^{k_1}aF(k_1,k_2)$$ thus obtaining 
$F(k_1,k_2)= \kappa_{k_1,(1,k_2)}$. We furthermore have that for $x \in K$ and $g=(a,k) \in G$ 
$$x \trl g= x \trl (a,k)=xk.$$

Denote the dual group ${{\mathbb{A}}} := \Hom(A, \Cx)$ and note that there is an induced
$K$-right action on ${{\mathbb{A}}}$  defined as $\rho^k(a):= \rho({}^ka)$  for $\rho \in {{\mathbb{A}}}$ and $k \in K$.

\subsection{Cohomology of groups and the Lyndon-Hochschild-Serre spectral sequence}
\label{subsection Cohomology of groups and the Lyndon-Hochschild-Serre spectral sequence}

In what follows we will construct an explicit double complex whose cohomology calculates the cohomology of the group $G$, and whose associated spectral sequence recovers the Lyndon-Hochschild-Spectral (LHS) spectral sequence of the extension
\eqref{extension of G by A and K}.

Endow the set $\Map(K, \Cx)$ with the left $G$-action $(g \trr f) (k):= f(k \trl g)$ where $g \in G$, $k \in K$ and $f : K \to \Cx$, and consider the complex $C^*(G,\Map(K, \Cx))$ with elements
normalized chains
$$C^q(G,\Map(K, \Cx)):= \{ f : K \times G^q \to \Cx | f(k;g_1,...,g_q)=1 \ \ \mbox{whenever some} \ \ g_i=1 \}$$
and boundary map
\begin{align}
(\delta_G f)(k ; g_1,...,g_q) = f(k \trl g_1;g_2,...,g_q) \prod_{i=1}^{q-1}f(k;g_1 ,..,g_ig_{i+1} &,...,g_q)^{(-1)^i} \nonumber \\
& f(k; g_1,...,g_{q-1})^{(-1)^{q}}. \label{differential G}
\end{align}

Since the natural morphism of groupoids, defined by the inclusion of the group $A$ into the action groupoid defined by the right action of $G$ on $K$,  is an equivalence of categories, we have that the restriction map
$$\psi : C^*(G,\Map(K, \Cx)) \to C^*(A, \Cx), \ \ \psi(f)(a_1,...,a_q):= f(1_K;a_1,...,a_q)$$
is a morphism of complexes which induces an isomorphism in cohomology
$$\widetilde{\psi}: H^*(G,\Map(K, \Cx)) \stackrel{\cong}{\to} H^*(A, \Cx).$$
The inverse map could be constructed at the level of cocycles  as follows
\begin{lemma} \label{lemma varphi}
The map $\varphi: C^q(A, \Cx) \to C^q(G,\Map(K, \Cx))$
$$\varphi(\alpha)(k;g_1,...,g_q):= \alpha(\kappa_{k,g_1}, \kappa_{k \trl g_1,g_2},...,\kappa_{k\trl g_1g_2...g_{q-1},g_q})$$
defines a map of complexes inducing an isomorphism in cohomology $\widetilde{\varphi}: H^*(A, \Cx) \stackrel{\cong}{\to} H^*(G,\Map(K, \Cx))$
which is the inverse of the map $\widetilde{\psi}$.
\end{lemma}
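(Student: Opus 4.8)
The plan is to check by hand that $\varphi$ is a morphism of complexes, that $\psi\circ\varphi$ is the identity already at the level of cochains, and then to deduce the cohomological statement from the fact --- recorded just before the lemma --- that $\widetilde\psi$ is an isomorphism. \emph{Well-definedness.} First I would verify that $\varphi(\alpha)$ is normalized: if some $g_i=1_G$, then writing $k'=k\trl g_1\cdots g_{i-1}$ we have $k'\trl 1_G=k'$, so \eqref{equation of kappa} reads $u(k')=u(k')\,1_G=\kappa_{k',1_G}\,u(k')$ and hence $\kappa_{k',1_G}=1_A$; since $\alpha$ is normalized this forces $\varphi(\alpha)(k;g_1,\dots,g_q)=1$.

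\emph{Chain map.} This is the heart of the argument. Fix $\alpha\in C^q(A,\Cx)$, fix $k\in K$ and $g_1,\dots,g_{q+1}\in G$, and abbreviate $a_i:=\kappa_{k\trl g_1\cdots g_{i-1},\,g_i}$ for $1\le i\le q+1$ (empty product $=1_G$), so that $\varphi(\alpha)(k;g_1,\dots,g_q)=\alpha(a_1,\dots,a_q)$. Unwinding the definition, $\varphi(\delta_A\alpha)(k;g_1,\dots,g_{q+1})=(\delta_A\alpha)(a_1,\dots,a_{q+1})$, and I would match its three types of factors with the three types of factors appearing in \eqref{differential G} for $\delta_G\varphi(\alpha)$. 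The leading factor $\alpha(a_2,\dots,a_{q+1})$ equals $\varphi(\alpha)(k\trl g_1;g_2,\dots,g_{q+1})$, since for $j\ge2$ the element $a_j$ is precisely the $(j-1)$-st $\kappa$-entry based at $k\trl g_1$. The trailing factor $\alpha(a_1,\dots,a_q)$ is $\varphi(\alpha)(k;g_1,\dots,g_q)$, with matching sign $(-1)^{q+1}$. For the middle factor $\alpha(a_1,\dots,a_ia_{i+1},\dots,a_{q+1})$, the cocycle identity $\kappa_{k',g_ig_{i+1}}=\kappa_{k',g_i}\,\kappa_{k'\trl g_i,\,g_{i+1}}$ gives $a_ia_{i+1}=\kappa_{k\trl g_1\cdots g_{i-1},\,g_ig_{i+1}}$, while the right-action axiom for $\trl$ (i.e. $k'\trl(g_ig_{i+1})=(k'\trl g_i)\trl g_{i+1}$) shows the remaining entries are unchanged by the merge; hence this factor equals $\varphi(\alpha)(k;g_1,\dots,g_ig_{i+1},\dots,g_{q+1})$, and the signs $(-1)^i$ line up. Therefore $\varphi(\delta_A\alpha)=\delta_G\varphi(\alpha)$.

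\emph{Left inverse and conclusion.} Next I would compute, for $a_1,\dots,a_q\in A$, that $\psi(\varphi(\alpha))(a_1,\dots,a_q)=\varphi(\alpha)(1_K;a_1,\dots,a_q)=\alpha\bigl(\kappa_{1_K,a_1},\,\kappa_{1_K\trl a_1,a_2},\,\dots\bigr)$. For $a\in A$ one has $1_K\trl a=p(u(1_K)a)=p(a)=1_K$, and then $u(1_K)a=a=\kappa_{1_K,a}\,u(1_K)$ forces $\kappa_{1_K,a}=a$; an immediate induction shows that every $\kappa$-entry above equals the corresponding $a_i$, so $\psi\circ\varphi=\id_{C^*(A,\Cx)}$. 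Passing to cohomology, $\widetilde\psi\circ\widetilde\varphi=\id$; since $\widetilde\psi$ is an isomorphism by the discussion preceding the lemma, it follows that $\widetilde\varphi=\widetilde\psi^{-1}$, which is exactly the assertion.

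I expect the chain-map identity to be the only genuine obstacle: it is pure bookkeeping, but one must be careful that the $K$-base points thread correctly through the non-homogeneous bar differential, which is precisely where the cocycle identity for $\kappa$ and the right-action axiom for $\trl$ enter. (Alternatively, one can bypass the explicit computation entirely by observing that $\varphi$ is the map on normalized cochains induced by the equivalence of groupoids $A\hookrightarrow(K/\!\!/G)$ invoked just above the lemma, so that functoriality of groupoid cohomology delivers both $\varphi\circ\delta_A=\delta_G\circ\varphi$ and $\psi\circ\varphi=\id$ at once.)
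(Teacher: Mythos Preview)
Your proof is correct and follows essentially the same route as the paper: verify $\varphi\circ\delta_A=\delta_G\circ\varphi$ by matching the bar-differential factors term-by-term via the cocycle identity $\kappa_{k',g_ig_{i+1}}=\kappa_{k',g_i}\kappa_{k'\trl g_i,g_{i+1}}$, and then conclude from $\psi\circ\varphi=\id$ (using $\kappa_{1_K,a}=a$) together with the already-established fact that $\widetilde\psi$ is an isomorphism. Your write-up is in fact slightly more thorough than the paper's, since you also check normalization explicitly and spell out why $\widetilde\varphi$ must be the inverse of $\widetilde\psi$.
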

\begin{proof}
On the one hand we have
\begin{align*}
\delta_G \varphi(\alpha)(k;g_1,...,g_p) = &   \varphi(\alpha)(k \trl g_1;g_2,...,g_q) \prod_{i=1}^{q-1} \varphi(\alpha)(k;g_1 ,..,g_ig_{i+1} ,...,g_q)^{(-1)^i}\\
&  \varphi(\alpha)(k; g_1,...,g_{q-1})^{(-1)^{q}}\\
=& \alpha(\kappa_{k \trl g_1,g_2}, \kappa_{k \trl g_1g_2,g_3},..., \kappa_{k \trl g_1...g_{q-1}, g_q})\\
&  \prod_{i=1}^{q-1} \alpha(\kappa_{k,g_1},\kappa_{k \trl g_1,g_2} ,...,\kappa_{k \trl g_1...g_{i-1},g_ig_{i+1}},..., \kappa_{k \trl g_1...g_{q-1},g_q})^{(-1)^i}\\
 & \alpha((\kappa_{k,g_1},\kappa_{k \trl g_1,g_2} ,...,\kappa_{k \trl g_1...g_{q-2},g_{q-1}})^{(-1)^{q}}
\end{align*}
and on the other
\begin{align*}
\varphi( \delta_G  & \alpha)(k;g_1,...,g_p) =   \delta_G  \alpha (\kappa_{k,g_1}, \kappa_{k \trl g_1,g_2},...,\kappa_{k\trl g_1g_2,...,g_{q-1},g_q})\\
=& \alpha(\kappa_{k \trl g_1,g_2}, \kappa_{k \trl g_1g_2,g_3},..., \kappa_{k \trl g_1...g_{q-1}, g_q})\\
&  \prod_{i=1}^{q-1} \alpha(\kappa_{k,g_1},\kappa_{k \trl g_1,g_2} ,...,
\kappa_{k \trl g_1...g_{i-1},g_i}\kappa_{k \trl g_1...g_{i-1}g_i,g_{i+1}},..., \kappa_{k \trl g_1...g_{q-1},g_q})^{(-1)^i}\\
 & \alpha((\kappa_{k,g_1},\kappa_{k \trl g_1,g_2} ,...,\kappa_{k \trl g_1...g_{q-2},g_{q-1}})^{(-1)^{q}}.
\end{align*}
The equality $\delta_G \varphi(\alpha)=\varphi( \delta_G   \alpha)$ follows from the identity
$$\kappa_{k \trl g_1...g_{i-1},g_ig_{i+1}}=\kappa_{k \trl g_1...g_{i-1},g_i}\kappa_{k \trl g_1...g_{i-1}g_i,g_{i+1}}.$$
Finally, the composition $\psi(\varphi(\alpha))=\alpha$ follows from the equation $\kappa_{1,a}=a$ for $a \in A$.
\end{proof}

The complex $C^*(A, \Cx)$ can be endowed with the structure of a right K-module by setting for $\alpha \in
C^q(A, \Cx)$ and $k \in K$ 
$$\alpha^k (a_1,...,a_q) := \alpha(u(k)a_1u(k)^{-1},...,u(k)a_qu(k)^{-1}),$$
and the complex  $C^*(G,\Map(K, \Cx))$ can also be endowed with the structure of a right $K$-module by setting
for $f \in C^q(G,\Map(K, \Cx))$ and $k \in K$
$$(f \trl k)(x; g_1,...,g_q):= f(kx;g_1,...,g_q).$$

The map $\varphi$ fails to be a $K$-module map; nevertheless it induces a $K$-module map at the level of cohomology

\begin{lemma} \label{lemma iso varphi}
The isomorphism $\widetilde{\varphi}: H^*(A, \Cx) \stackrel{\cong}{\to} H^*(G,\Map(K, \Cx))$ is an isomorphism of
$K$-modules.
\end{lemma}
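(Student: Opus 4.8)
The plan is to derive the lemma from Lemma~\ref{lemma varphi} together with the classical triviality of conjugation on (groupoid) cohomology. Since $\widetilde{\varphi}$ and $\widetilde{\psi}$ are mutually inverse, and the inverse of a $K$-module isomorphism is again a $K$-module isomorphism, it suffices to prove that $\widetilde{\psi}$ is $K$-equivariant, i.e.\ that for every $f\in Z^q(G,\Map(K,\Cx))$ the cohomology classes of $\psi(f\trl k)$ and $\psi(f)^k$ coincide. Unravelling the definitions, $\psi(f\trl k)(a_1,\dots,a_q)=f(k;a_1,\dots,a_q)$ while $\psi(f)^k(a_1,\dots,a_q)=f(1_K;{}^ka_1,\dots,{}^ka_q)$, so this is precisely the statement that moving the base object of the action groupoid from $1_K$ to $k$ and conjugating by the section is trivial on cohomology.

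The conceptual argument I would write out runs as follows. Recall that $C^*(G,\Map(K,\Cx))$ is the cochain complex of the action groupoid $\mathcal{G}$ of the right $G$-action on $K$ (objects $K$, morphisms $K\times G$ with source $(x,g)\mapsto x$ and target $(x,g)\mapsto x\trl g$), and that the inclusion $\iota\colon BA\hookrightarrow\mathcal{G}$ of the one-object groupoid $BA=\Aut_{\mathcal G}(1_K)$ is an equivalence inducing $\widetilde{\psi}$. The right $K$-action $f\mapsto f\trl k$ is the map induced on cochains by the functor $F_k\colon\mathcal{G}\to\mathcal{G}$ with $F_k(x)=kx$ and $F_k(x,g)=(kx,g)$; this is a well-defined functor because $kx\trl g=k(x\trl g)$, which in the model $G=A\rtimes_F K$ is just associativity in $K$. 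Now the morphism $u(k)^{-1}\colon k\to 1_K$ of $\mathcal{G}$ is a natural isomorphism between the two functors $BA\to\mathcal{G}$ given by $F_k\circ\iota$ (sending the object to $k$ and $a\in A$ to $(k,a)$) and $\iota\circ{}^k(-)$ (sending the object to $1_K$ and $a$ to $(1_K,{}^ka)$); the single naturality square is the identity $a\cdot u(k)^{-1}=u(k)^{-1}\cdot{}^ka$ in $G$, immediate from ${}^ka=u(k)au(k)^{-1}$. Hence these two functors induce the same map $H^*(A,\Cx)\to H^*(\mathcal{G})=H^*(G,\Map(K,\Cx))$, which says exactly $\widetilde{\psi}\circ F_k^{*}=\bigl(\alpha\mapsto\alpha^k\bigr)\circ\widetilde{\psi}$, the desired equivariance. (Equivalently one may note $\Map(K,\Cx)\cong\mathrm{Ind}_A^G\Cx$ and invoke the $K$-equivariance of Shapiro's isomorphism, with $K=G/A$ acting on $H^*(A,\Cx)$ by conjugation.)

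If instead one wants a cochain-level proof in the spirit of Lemma~\ref{lemma varphi}, I would produce an explicit chain homotopy between the chain maps $\alpha\mapsto\varphi(\alpha^k)$ and $\alpha\mapsto\varphi(\alpha)\trl k$ from $C^*(A,\Cx)$ to $C^*(G,\Map(K,\Cx))$. The whole computation rests on the identity
$$\kappa_{kx,g}\;=\;{}^k(\kappa_{x,g})\;\kappa_{k,u(x\trl g)}\;\kappa_{k,u(x)}^{-1},$$
which follows from $u(kx)=\kappa_{k,u(x)}^{-1}u(k)u(x)$, from the defining relation \eqref{equation of kappa}, and from normality of $A$. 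Setting $\lambda_k(x):=\kappa_{k,u(x)}$ and $y_i:=x\trl g_1\cdots g_i$, this yields
$$(\varphi(\alpha)\trl k)(x;g_1,\dots,g_q)=\alpha\!\left({}^k\kappa_{y_0,g_1}\,\tfrac{\lambda_k(y_1)}{\lambda_k(y_0)},\ \dots,\ {}^k\kappa_{y_{q-1},g_q}\,\tfrac{\lambda_k(y_q)}{\lambda_k(y_{q-1})}\right),$$
whereas $\varphi(\alpha^k)(x;g_1,\dots,g_q)=\alpha\bigl({}^k\kappa_{y_0,g_1},\dots,{}^k\kappa_{y_{q-1},g_q}\bigr)$; the two differ only by the telescoping factors $\lambda_k(y_i)$, and the standard ``conjugation homotopy'' for bar complexes (insert $u(k)$ in the $i$-th slot, conjugate the first $i$ entries, alternate signs) exhibits this difference as a coboundary. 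The main obstacle in either route is bookkeeping rather than ideas: in the conceptual argument, fixing the groupoid dictionary and the composition-order conventions exactly; in the explicit route, tracking which $\kappa$'s and which shifted base points $y_i$ appear in each face of the homotopy and checking the telescoping cancellations, just as in the proof of Lemma~\ref{lemma varphi}.
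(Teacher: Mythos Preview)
Your proposal is correct, but the paper's own proof is considerably shorter and more direct than either of your two routes. The paper does not prove $K$-equivariance of $\widetilde{\psi}$ for arbitrary cocycles, nor does it build a chain homotopy. Instead it simply computes $\psi(\varphi(\alpha)\trl k)$ on cochains: since $k\trl a=k$ for $a\in A$ (normality) and $\kappa_{k,a}={}^ka$, one gets
\[
\psi(\varphi(\alpha)\trl k)(a_1,\dots,a_q)=\varphi(\alpha)(k;a_1,\dots,a_q)=\alpha(\kappa_{k,a_1},\dots,\kappa_{k,a_q})=\alpha^k(a_1,\dots,a_q),
\]
an \emph{equality} of cochains, not merely of cohomology classes. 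Combined with $\psi(\varphi(\alpha^k))=\alpha^k$ from Lemma~\ref{lemma varphi} and the injectivity of $\widetilde{\psi}$ on cohomology, this forces $[\varphi(\alpha)\trl k]=[\varphi(\alpha^k)]$.

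Your groupoid argument is a clean conceptual explanation of \emph{why} this works (the natural isomorphism $u(k)^{-1}\colon k\to 1_K$ identifies $F_k\circ\iota$ with $\iota\circ{}^k(-)$), and it proves the stronger statement that $\widetilde{\psi}$ itself is $K$-equivariant, at the price of invoking the invariance of groupoid cohomology under natural isomorphism. Your second approach via an explicit homotopy on $\varphi(\alpha)\trl k$ versus $\varphi(\alpha^k)$ is correct in outline but substantially more work than needed: the telescoping $\lambda_k(y_i)$ factors you identify vanish upon applying $\psi$ (i.e.\ setting $x=1_K$), which is exactly the shortcut the paper exploits.
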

\begin{proof}
Take $\alpha \in Z^q(A, \Cx)$ and $k \in K$. We claim that $\psi(\varphi(\alpha) \trl k)= \alpha^k$, and since
$\psi(\varphi(\alpha^k))=\alpha^k$, we conclude that $\varphi(\alpha) \trl k$ and $\varphi(\alpha^k)$ are cohomologous.
Now, let us calculate
\begin{align*}
\psi(\varphi(\alpha) \trl k)(a_1,...,a_q) =& (\varphi(\alpha) \trl k)(1;a_1,...,a_q)\\
=& \varphi(\alpha) (k;a_1,...,a_q)\\
=& \alpha(\kappa_{k,a_1}, \kappa_{k \trl a_1,a_2},...,\kappa_{k\trl a_1a_2,...,a_{q-1},a_q})\\
=& \alpha(\kappa_{k,a_1}, \kappa_{k ,a_2},...,\kappa_{k ,a_q})\\
=& \alpha(u(k)a_1u(k)^{-1}, u(k)a_2u(k)^{-1},...,u(k)a_qu(k)^{-1})\\
=& \alpha^k(a_1,a_2,...,a_q);
\end{align*}
the lemma follows.
\end{proof}

\subsubsection{Double complex} \label{subsubsection Double complex} Since $C^*(G,\Map(K, \Cx))$ is a complex of right $K$-modules, we can consider the complexes
$$C^*(K, C^q(G,\Map(K, \Cx)))$$
with $C^p(K, C^q(G,\Map(K, \Cx))$ consisting of normalized cochains
$$ \{f : K^p \to C^q(G,\Map(K, \Cx))| f(k_1,...,k_p)=1 \ \ \mbox{whenever some} \ \ k_i=1 \}$$ and whose differentials 
\begin{align*}
(\delta_K f)(k_1,...,k_p) =& f(k_2,...,k_p) \prod_{i=1}^{p-1} f(k_1,...,k_ik_{i+1},...,k_p)^{(-1)^i}(f(k_1,...,k_{p-1}) \trl k_p)^{(-1)^p}.
\end{align*}

These complexes assemble into a double complex $C^{p,q} :=C^p(K, C^q(G,\Map(K, \Cx)))$. Let us denote by
$\Tot(C^{*,*})$ the total complex associated to the double complex and let $\delta_{Tot}:=\delta_K \oplus (\delta_G)^{(-1)^p}$
be its differential.

We may filter the total complex by the degree of the $G$ cochains, thus obtaining a spectral sequence
whose first page becomes
$$E_1^{p,q}= H^p(K, C^q(G,\Map(K, \Cx))).$$
Since the $K$-modules $C^q(G,\Map(K, \Cx))$ are free $K$-modules, we conclude that the first page localizes on
the $y$-axis,
$$E_1^{0,q} = H^0(K, C^q(G,\Map(K, \Cx))) = C^q(G,\Map(K, \Cx))^K \cong C^q(G, \Cx)$$
and  $E_1^{p,q}=0$ for $p>0$. The spectral sequence collapses at the second page, with the only surviving
elements on the $y$-axis
$$E_2^{0,q}= H^q(G, \Cx).$$
Hence we have
\begin{proposition}
The inclusion of $K$-invariant cochains
$$C^*(G,\Map(K, \Cx))^K \hookrightarrow \Tot(C^*(K, C^*(G,\Map(K, \Cx))))$$
is a quasi-isomorphism. Therefore the cohomology groups
$$H^*(G,\Cx) \stackrel{\cong}{\to} H^*(\Tot(C^*(K, C^*(G,\Map(K, \Cx))))$$
are canonically isomorphic.
\end{proposition}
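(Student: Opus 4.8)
The plan is to build the proof out of the two structural facts already established: first, that $C^\bullet(G,\Map(K,\Cx))$ computes $H^\bullet(A,\Cx)$ as a complex of $K$-modules (Lemmas \ref{lemma varphi} and \ref{lemma iso varphi}), and second, that each $C^q(G,\Map(K,\Cx))$ is a \emph{free} (hence cohomologically trivial) $K$-module. The claim is a standard ``double complex vs.\ edge complex'' statement, so I would phrase the argument as a comparison of the two filtrations of $\Tot(C^{\bullet,\bullet})$.

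\medskip
\textbf{Step 1: Set up the spectral sequence filtered by $G$-degree.} Filter $C^{p,q}=C^p(K,C^q(G,\Map(K,\Cx)))$ by the column index $q$ (equivalently by the degree of the $G$-cochains), so that the $E_1$-page is obtained by first taking cohomology with respect to $\delta_K$. This gives $E_1^{p,q}=H^p(K,C^q(G,\Map(K,\Cx)))$. Since each $C^q(G,\Map(K,\Cx))$ is a free $\Z[K]$-module (this is exactly the observation that $\Map(K,\Cx)$ is coinduced from the trivial subgroup, and tensoring/homming a free module stays free), its higher group cohomology vanishes: $E_1^{p,q}=0$ for $p>0$, and $E_1^{0,q}=C^q(G,\Map(K,\Cx))^K$.

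\medskip
\textbf{Step 2: Identify the surviving column and compute $E_2$.} The only nonzero column of $E_1$ is the $0$-th one, carrying the complex $\big(C^\bullet(G,\Map(K,\Cx))^K,\ \delta_G\big)$; since the differential $\delta_K$ has vanished on $E_1$ for degree reasons the spectral sequence already degenerates beyond $E_2$, and $E_2^{0,q}=H^q\big(C^\bullet(G,\Map(K,\Cx))^K\big)$. Now I invoke Lemma \ref{lemma iso varphi}: $C^\bullet(G,\Map(K,\Cx))\cong C^\bullet(A,\Cx)$ as complexes of $K$-modules up to the cochain-level maps $\psi,\varphi$, and taking $K$-invariants of a (cohomologically trivial in positive degrees) resolution-type complex computes $H^\bullet$ of the invariants; concretely one checks $C^q(G,\Map(K,\Cx))^K\cong C^q(G,\Cx)$ directly, since a $K$-invariant normalized cochain $f(kx;g_1,\dots,g_q)$ independent of $k$ is the same datum as a function of $x$ alone. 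Hence $E_2^{0,q}=H^q(G,\Cx)$ and all other entries vanish, so the spectral sequence collapses and $H^n(\Tot(C^{\bullet,\bullet}))\cong H^n(G,\Cx)$.

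\medskip
\textbf{Step 3: Recognize the isomorphism as induced by the edge inclusion.} Collapse at $E_2$ on a single column means the edge map $E_\infty^{0,n}\hookrightarrow H^n(\Tot)$ is an isomorphism, and unwinding what this edge map is: it is precisely induced by the inclusion of the $0$-th column $C^\bullet(G,\Map(K,\Cx))^K\hookrightarrow \Tot(C^{\bullet,\bullet})$ of cochains concentrated in $K$-degree $0$. Therefore this inclusion is a quasi-isomorphism, which is the statement of the Proposition; chasing through $C^q(G,\Map(K,\Cx))^K\cong C^q(G,\Cx)$ identifies the source cohomology with $H^\bullet(G,\Cx)$ and gives the canonical isomorphism $H^\bullet(G,\Cx)\xrightarrow{\cong}H^\bullet(\Tot)$.

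\medskip
The only genuinely substantive point — everything else is the formalism of a degenerate first-quadrant spectral sequence — is the vanishing $H^p(K,C^q(G,\Map(K,\Cx)))=0$ for $p>0$, i.e.\ that these $K$-modules are acyclic. I would justify it by exhibiting $\Map(K,\Cx)$ as the coinduced module $\mathrm{Coind}_1^K \Cx$ (the $G$-action factoring through the quotient action of $K$ on $K$ by right translation, which is free), so that $C^q(G,\Map(K,\Cx))$ is a coinduced, hence cohomologically trivial, $K$-module by Shapiro's lemma; alternatively one writes down an explicit contracting homotopy for $\delta_K$ using a $\Cx$-point of $K$ or the regular-representation splitting. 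If one wants to avoid spectral sequences entirely, the same conclusion follows from the more elementary fact that a first-quadrant double complex all of whose rows (or columns) are exact except in a single spot has its total complex quasi-isomorphic to that spot, applied row-by-row to the $\delta_K$-direction.
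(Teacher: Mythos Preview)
Your argument is correct and is essentially the same as the paper's: filter $\Tot(C^{*,*})$ by $G$-degree, use that the $K$-modules $C^q(G,\Map(K,\Cx))$ are free to get $E_1^{p,q}=0$ for $p>0$ and $E_1^{0,q}=C^q(G,\Map(K,\Cx))^K\cong C^q(G,\Cx)$, and conclude by collapse at $E_2$. You supply more detail than the paper does (the Shapiro/coinduced justification for acyclicity and the explicit identification of the edge map with the inclusion), but the route is identical; note that the appeal to Lemma~\ref{lemma iso varphi} in your Step~2 is unnecessary, since the identification $C^q(G,\Map(K,\Cx))^K\cong C^q(G,\Cx)$ is the direct one you also give.
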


Filtering the double complex by the degree of the $K$ cochains we obtain the Lyndon-Hochschild-Serre
spectral sequence associated to the group extension $1 \to A \to G \to K \to 1$ (see \cite[\S 7.2]{Evens} and references therein). The first page becomes
$$E_1^{p,q} = C^p(K, H^q(G,\Map(K, \Cx)))$$
and the second page becomes
$$E_2^{p,q} = H^p(K, H^q(G,\Map(K, \Cx))).$$
Since the projection map $\widetilde{\psi}:  H^q(G,\Map(K, \Cx)) \stackrel{\cong}{\to} H^q(A,\Cx)$
is an isomorphism of $K$-modules, we conclude
\begin{proposition}[LHS spectral sequence] Filtering the total complex by the degree of the $K$-chains,
we obtain a spectral sequence whose second page is 
$$E_2^{p,q} \cong H^p(K, H^q(A,\Cx))$$
and that converges to $H^*(G,\Cx)$.
\end{proposition}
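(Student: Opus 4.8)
The plan is to run the standard spectral sequence of the double complex $C^{p,q} = C^p(K, C^q(G,\Map(K,\Cx)))$, this time with the filtration by the $K$-degree $p$, and then translate the $E_1$ and $E_2$ pages using Lemma \ref{lemma varphi} and Lemma \ref{lemma iso varphi}. For convergence: the double complex sits in the first quadrant, so the filtration $F^m\Tot(C^{*,*}) = \bigoplus_{p\ge m} C^{p,*}$ is bounded in each total degree, hence the associated spectral sequence converges to $H^*(\Tot(C^{*,*}))$. By the preceding Proposition this target is canonically $H^*(G,\Cx)$, which gives the claimed abutment with no further work.

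Next I would compute the low pages. With the column (i.e.\ $K$-degree) filtration, the $E_0$-differential on $E_0^{p,q}=C^{p,q}$ is, up to the sign $(-1)^p$ already recorded in $\delta_{Tot}$, the vertical differential $\delta_G$. The one point that needs a word of justification is that $E_1$ may be computed row by row: for a cochain complex $M^\bullet$ of $K$-modules, $C^p(K, M^\bullet)$ is just the product of copies of $M^\bullet$ indexed by the non-degenerate tuples $(k_1,\dots,k_p)\in K^p$, and since $K$ is finite this is a \emph{finite} product, so cohomology commutes with it. Hence
\[
E_1^{p,q} = H^q\bigl(C^p(K, C^*(G,\Map(K,\Cx))),\,\delta_G\bigr) \cong C^p\bigl(K, H^q(G,\Map(K,\Cx))\bigr).
\]
Because $\delta_K$ and $\delta_G$ commute in the double complex, $\delta_K$ is a chain map for $\delta_G$ and therefore descends to $\delta_G$-cohomology; under the identification above this induced map is exactly the differential $\delta_K$ of the complex $C^*(K, H^q(G,\Map(K,\Cx)))$ (here we use that the $K$-action on $H^q(G,\Map(K,\Cx))$ is the one induced from the $K$-action $(f\trl k)(x;g_1,\dots,g_q)=f(kx;g_1,\dots,g_q)$ on cochains). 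Consequently
\[
E_2^{p,q} = H^p\bigl(K, H^q(G,\Map(K,\Cx))\bigr).
\]

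It then remains to identify the coefficient $K$-module. By Lemma \ref{lemma varphi} the restriction map gives an isomorphism $\widetilde{\psi}: H^q(G,\Map(K,\Cx)) \stackrel{\cong}{\to} H^q(A,\Cx)$, and by Lemma \ref{lemma iso varphi} this is an isomorphism of $K$-modules, where $K$ acts on $H^*(A,\Cx)$ through the conjugation action $\alpha\mapsto\alpha^k$. Substituting this into the formula for $E_2$ yields
\[
E_2^{p,q} \cong H^p(K, H^q(A,\Cx)),
\]
which is the assertion, the convergence having already been dealt with.

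The argument is essentially bookkeeping, so I do not anticipate a real obstacle; the only place that demands a little care is the "row by row" computation of $E_1$ (equivalently, that vertical $\delta_G$-cohomology of the double complex can be taken columnwise), for which one invokes finiteness of $K$ together with the fact that $C^p(K,-)$ on normalized cochains is a product of copies of the coefficient module — no hypothesis beyond what is already in force is required. Finally, the statement in the surrounding text that this is literally the classical Lyndon–Hochschild–Serre spectral sequence, rather than merely a spectral sequence converging to $H^*(G,\Cx)$ with the stated $E_2$-page, is the content of the comparison with \cite[\S 7.2]{Evens}, which I would simply cite.
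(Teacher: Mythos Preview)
Your proposal is correct and follows essentially the same route as the paper: compute the column filtration's $E_1$ and $E_2$ pages as $C^p(K,H^q(G,\Map(K,\Cx)))$ and $H^p(K,H^q(G,\Map(K,\Cx)))$, then invoke the $K$-equivariant isomorphism $H^q(G,\Map(K,\Cx))\cong H^q(A,\Cx)$ from Lemmas \ref{lemma varphi} and \ref{lemma iso varphi}, with convergence coming from the preceding Proposition. The paper simply states these identifications (citing \cite[\S 7.2]{Evens}), whereas you spell out the first-quadrant convergence and the finite-product justification for the row-by-row computation of $E_1$; these are welcome details but not a different argument.
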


We will denote $d_i: E_i^{p,q} \to E_i^{p+i,q-i+1}$ the differentials of this spectral sequence.

\subsection{Tensor categories}
Following \cite[\S 1]{Bakalov-Kirillov}, a tensor category consist of $(\CC, \otimes, 1_\CC, \alpha, \lambda, \rho)$
where $\CC$ is a category, $\otimes : \CC \times \CC \to \CC$ is a bifunctor, $\alpha$ is the associativity constraint i.e.
a functorial isomorphism $\alpha_{UVW}: (U \otimes V) \otimes W \stackrel{\sim}{\to} U \otimes (V \otimes W)$
of functors $\CC \times \CC \times \CC \to \CC$, $1_{\CC} \in {\rm{Ob}}(\CC)$ is a unit element and $\lambda, \rho$ are functorial
isomorphisms $\lambda_V : 1_\CC \otimes V \stackrel{\sim}{\to} V$,  $\rho_V : V \otimes 1_\CC  \stackrel{\sim}{\to} V$, satisfying the pentagon axiom
$$\xymatrix{
&  ((V_1 \otimes V_2) \otimes V_3) \otimes V_4 \ar[ld]_{\alpha_{1,2,3} \otimes id_4} \ar[dr]^{\alpha_{12,3,4}} & \\
(V_1 \otimes (V_2 \otimes V_3)) \otimes V_4 \ar[d]^{\alpha_{1,23,4}} && ( V_1 \otimes V_2) \otimes (V_3 \otimes V_4) \ar[d]^{\alpha_{1,2,34}}\\
V_1 \otimes ((V_2 \otimes V_3) \otimes V_4) \ar[rr]_{id_1 \otimes \alpha_{2,3,4}} && ( V_1 \otimes (V_2 \otimes (V_3 \otimes V_4))) 
}$$
 and the triangle axiom 
$$\xymatrix{(V_1 \otimes 1_\CC) \otimes V_2 \ar[dr]_{\rho \otimes id}
\ar[rr]^\alpha && V_1 \otimes (1_\CC \otimes V_2) \ar[ld]^{id \otimes \lambda}\\
& V_1 \otimes V_2. &
}$$

\subsection{The fusion category $Vect(G, \omega)$}
A fusion category over $\complex$  is a rigid semi-simple $\complex$-linear tensor category, with only finitely many isomorphism classes of simple objects, such that the endomorphisms of the unit object is $\complex$ (see \cite{ENO}). 

For $G$ a finite group and a 3-cocycle $\omega \in Z^3(G, \Cx)$, define the category $Vect(G, \omega)$ by setting that its objects are $G$-graded complex vector spaces  $V = \bigoplus_{g \in G} V_g$, whose tensor product is
$$(V \otimes W)_g : = \bigoplus_{hk=g}V_h \otimes W_k,$$
whose associativity constraint is
$\alpha_{V_g,V_h,V_k} = \omega(g,h,k) \gamma$ with $\gamma((x \otimes y) \otimes z)= x \otimes (y \otimes z)$,
and whose left and right unit isomorphisms are $\lambda_{V_g} =\omega(1,1,g)^{-1}id_{V_g}$ and $\rho_{V_g} =\omega(g,1,1)id_{V_g}$. 
The category $Vect(G, \omega)$ is a fusion category where the simple objects are the 1-dimensional vector spaces.

 We will assume that all group cochains are normalized, and hence the left and right unit isomorphisms become identities.

 For convenience we will work with a category $\VV(G,\omega)$ which is {\it{skeletal}}, i.e. one on which isomorphic objects are equal,  and which is equivalent to $Vect(G,\omega)$.  The category $\VV(G,\omega)$ has for simple objects the elements $g$ of the group $G$, the tensor product is
 $g \otimes h =gh$ and the associativity isomorphisms are $\omega(g,h,k)id_{ghk}$.
 
 A finite tensor category is called {\it{pointed}} if all its simple objects are invertible. It is thus easy to see that any finite tensor category which is pointed is equivalent
 to $Vect(G, \omega)$ for some finite group $G$ and some 3-cocycle $\omega$.

 \subsection{Module Categories} Following \cite[\S 2.3]{Ost}, a right {\it{module category}} over the tensor category $(\CC, \otimes, 1_\CC, \alpha, \lambda, \rho)$ consists of $(\MM, \otimes , \mu, \tau)$ where $\MM$ is a category, $\otimes : \MM \times \CC \to \MM$  is an exact
 bifunctor, $\mu_{M,X,Y}: M \otimes (X\otimes Y) \stackrel{\sim}{\to}
 (M \otimes X) \otimes Y$ is a functorial associativity and $\tau_M: M \otimes 1_\CC \stackrel{\sim}{\to} M$ is a unit isomorphism for any $X,Y \in \CC$, $M \in \MM$, satisfying the pentagon axiom
\begin{align}\xymatrix{
& M \otimes ((X \otimes Y) \otimes Z) \ar[ld]_{id_M  \otimes \alpha_{X,Y,Z}} \ar[dr]^{\mu_{M,X \otimes Y, Z}} & \\
 M \otimes (X \otimes (Y \otimes Z)) \ar[d]^{\mu_{M,X,Y \otimes Z}} && ( M \otimes (X \otimes Y)) \otimes Z\ar[d]^{\mu_{M,X,Y} \otimes id_Z}\\
 (M \otimes X) \otimes ( Y \otimes Z) \ar[rr]_{\mu_{M \otimes X,Y,Z}} &&  ((M \otimes X) \otimes Y) \otimes Z
} \label{pentagon axiom module category}\end{align}
 and the triangle axiom 
\begin{align}\xymatrix{M \otimes (1_\CC \otimes Y) \ar[dr]_{id_M \otimes \lambda_Y}
\ar[rr]^{\mu_{M, 1_\CC, Y}} && (M \otimes 1_\CC) \otimes Y \ar[ld]^{\tau_M \otimes id_Y}\\
& M \otimes Y. &
}\label{triangle axiom module category}\end{align}

A {\it{module functor}} $(F,\gamma): (\MM_1 , \mu^1, \tau^1)\to (\MM_2 , \mu^2, \tau^2)$ 
between two module categories consist of a functor $F : \MM_1 \to \MM_2$ and a functorial isomorphism
$\gamma_{M,X}: F(M \otimes X) \to F(M)\otimes X$ for any $X \in \CC$, $M \in \MM$, satisfying the pentagon
axiom
$$\xymatrix{
&F( M \otimes (X \otimes Y)) \ar[ld]_{F(\mu^1_{M,X,Y})} \ar[dr]^{\gamma_{M,X \otimes Y}} & \\
 F(( M \otimes X) \otimes Y) \ar[d]^{\gamma_{M \otimes X, Y}} &&F( M )\otimes (X \otimes Y)\ar[d]^{\mu^2_{F(M),X,Y} }\\
 F( M \otimes X) \otimes Y \ar[rr]_{\gamma_{M, X \otimes id_Y}} && (F( M ) \otimes X) \otimes Y
}$$
 and the triangle axiom 
$$\xymatrix{F(M \otimes 1_\CC) \ar[dr]_{\gamma_{M, 1_\CC}}
\ar[rr]^{F(\tau^1_M)} && F(M) \\ 
& F(M) \otimes 1_\CC. \ar[ru]_{\tau^1_{F(M)}}  &
}$$
Two module categories $\MM_1$ and $\MM_2$ over $\CC$ are {\it{equivalent}} if there exist a module functor between the two which
is moreover an equivalence of categories. The {\it{direct sum}} $\MM_1 \oplus \MM_2$ is the module category with the obvious structure. A module
category is {\it{indecomposable}} if it is not equivalent to the direct sum of
two non-trivial module categories.

   A {\it{natural module transformation}} $\eta: (F^1, \gamma^1) \to (F^2, \gamma^2)$ consist of a natural transformation $\eta: F^1 \to F^2$
   such that the square
   $$\xymatrix{
   F^1(M \otimes X) \ar[r]^{\eta_{M \otimes X }}  \ar[d]_{ \gamma^1_{M, X}} &  F^2(M \otimes X) \ar[d]^{\gamma^2_{M,X}}\\
   F^1(M) \otimes X \ar[r]_{\eta_M \otimes id_X} & F^2(M) \otimes X.
   }$$
   commutes for all $M \in \MM$ and $X \in \CC$.
   
   \subsection{Indecomposable module categories over $\VV(G, \omega)$}
   \label{subsection indecomposable module categories}
   Let $\MM$ be a skeletal right module category over $\VV(G, \omega)$. The set of simple objects of $\MM$ is a transitive right $G$-set and therefore it
   can be identified with the coset $K :=A \backslash G$ for $A$ a subgroup of $G$. The isomorphisms $\mu_{k,g_1,g_2}$ for $k \in K$ and $g_1,g_2 \in G$ are scalars, and we can assemble these scalars as an element
   $$\mu \in C^2(G, \Map(K, \Cx)), \ \ \mu(k;g_1,g_2):= \mu_{k,g_1,g_2}.$$
   The pentagon axiom \eqref{pentagon axiom module category} translates into the equation
   $$\omega(g_1,g_2,g_3) \mu(k;g_1,g_2g_3) \mu(k \trl g_1; g_2, g_3)= \mu(k; g_1g_2,g_3)\mu(k;g_1,g_2),$$
   which in view of the definition of the differential $\delta_G$ in \eqref{differential G} becomes
   \begin{align} \label{delta mu = omega}
   \delta_G \mu^{-1} = \pi^*\omega
   \end{align}
   where $\pi^* \omega \in C^3(G,\Map(K, \Cx))^K$ is the $K$-invariant cocycle defined by $\omega$, i.e.
   $$\pi^* \omega (k;g_1,g_2,g_3) : = \omega (g_1,g_2,g_3).$$
   
   Since $\mu$ is normalized and the unit constraint in $\VV(G, \omega)$ is trivial, we have that the triangle axiom \eqref{triangle axiom module category} implies that the unit constraint in $\MM$ is trivial.
   
   Denote this skeletal module category $\MM = \MM(A \backslash G, \mu)$. Note that two $\VV(G, \omega)$-module categories
   $\MM_1 = \MM(A_1 \backslash G, \mu_1)$ and 
   $\MM_2 = \MM(A_2 \backslash G, \mu_2)$ are equivalent if
   and only if there exist a right $G$-equivariant isomorphism   
   $F: A_1 \backslash G \stackrel{\cong}{\to} A_2 \backslash G$
      and an element $\gamma \in C^1(G, \Map(A_1 \backslash G, \Cx))$ such that
      $$\gamma(A_1g ; g_1g_2) \mu_2(F(A_1g);g_1,g_2)= \mu_1(A_1g;g_1,g_2) \gamma(A_1gg_1;g_2) \gamma(A_1g;g_1).$$  
   This information implies that $A_1$ and $A_2$ are conjugate
   subgroups of $G$ and that 
$$   \delta_G \gamma = \frac{F^*\mu_2}{\mu_1}.$$

In the case that $A=A_1=A_2$, the $G$-equivariant isomorphisms
are parameterized by the elements of the group $A \backslash N_G(A)$, and the equation $\delta_G \gamma = \frac{F^*\mu_2}{\mu_1}$
    implies that $\frac{F^*\mu_2}{\mu_1}$ is trivial in $H^2(G,\Map(A \backslash G, \Cx))$. Since we know that $\widetilde{\psi}:H^2(G,\Map(A \backslash G, \Cx)) \stackrel{\cong}{\to} H^2(A, \Cx)$ is an isomorphism, we can
    conclude that the isomorphism classes of module categories over
    $\VV(G, \omega)$ may be parameterized (in a non-canonical manner) by pairs $([A],[\psi(\mu)])$
    where $[A]$ is a conjugacy class of subgroups of $G$, and $[\psi(\mu)]$ is a representative of a cohomology class in the group of invariants
    $H^2(A, \Cx)/{N_G(A)}$.

 \subsection{Dual category} \label{subsection Dual category} Let $\CC$ be a tensor category and $\MM$ an indecomposable right module category. The dual category
 $\CC^*_\MM := \Fun_\CC(\MM,\MM)$ is the category whose objects are module
 functors from $\MM$ to itself and whose morphisms are natural module transformations.
 
 The category $\CC_\MM^*$ becomes a tensor category by composition of functors, namely for 
 $(\gamma^1, F_1), (\gamma^2, F_2) \in \Obj(\CC_\MM^*)$ where $\gamma^1, \gamma^2$
  represent the module structures on the functors $F_1$ and $F_2$
 respectively, we define the tensor structure by
 $(\gamma^1, F_1)\otimes (\gamma^2, F_2):= (\gamma, F_1 \circ F_2)$ where the module structure $\gamma$ is defined by $\gamma_{M,X}:=\gamma^1_{F_2(M),X} \circ F_1(\gamma^2_{M,X})$ for $M \in \MM$ and $X \in \CC$. For two morphisms $\eta : (\gamma^1, F_1) \to (\gamma^2, F_2)$ and
$\eta' : (\gamma'^1, F'_1) \to (\gamma'^2, F'_2)$ in $\CC_\MM^*$ their tensor product is 
$(\eta \otimes \eta' )(M): = \eta_{F_2'(M)} \circ F_1(\eta'_M)$.
 
 Whenever $\CC$ and $\MM$ are semisimple, the dual category $\CC_\MM^*$ is semisimple \cite[\S 2.2]{Ost-2}. Moreover, since $\MM$ is itself a left module category over $\CC_\MM^*$ it has been shown 
in  \cite[Cor. 4.1]{Ost} that the double dual is tensor equivalent to the original category, i.e.  $(\CC^*_\MM)^*_\MM \simeq \CC$. Furthermore, the module categories of $\CC$ and of $\CC_\MM^*$ are
 in canonical bijection \cite[Prop. 2.1]{Ost-2} by the following maps.
  For $\MM_1$ a module category over $\CC$, the category $\Fun_{\CC}(\MM_1,\MM)$ of module functors from $\MM_1$
  to $\MM$ is a left module category of $\CC_\MM^*=\Fun_{\CC}(\MM,\MM)$ via the composition of functors. Conversely, if $\MM_2$
  is a left module category over $\CC_\MM^*$, then  
  $\Fun_{\CC_\MM^*}(\MM,\MM_2)$ is a right module category
  over $\Fun_{\CC_\MM^*}(\MM,\MM)=(\CC_\MM^*)_\MM^* \simeq \CC$ via composition of functors. These maps are inverse from each other.
 
 \subsection{Center of a tensor category} The {\it{center}} $\ZZ(\CC)$
 of the tensor category $\CC$  is the category whose objects are pairs $(X, \eta)$ where $X$ is an object in $\CC$ and $\eta$
 is a functorial set of isomorphisms $\eta_Y : X \otimes Y \to Y \otimes X$ such that the hexagon diagram
 $$\xymatrix{
 (X \otimes Y) \otimes Z \ar[r]^{\alpha} \ar[d]^{\eta_Y \otimes 1}& X \otimes (Y \otimes Z) \ar[r]^{\eta_{Y \otimes Z}} & (Y \otimes Z) \otimes X \ar[d]^\alpha \\
(Y \otimes X) \otimes Z \ar[r]^\alpha & Y \otimes (X \otimes Z) \ar[r]^{1 \otimes \eta_{Z}} & Y \otimes ( Z \otimes X)
  }$$
 and the triangle diagram
 $$\xymatrix{
 X \otimes 1_\CC \ar[rr]^{\eta_{1_\CC}} \ar[rd]_\rho && 1_\CC \otimes X \ar[dl]^\lambda\\
 & X &
 }$$
 are commutative. A morphism $f:(X, \eta) \to (Y , \nu)$ consists of a morphism $f:X \to Y$ for which the diagram 
 $$\xymatrix{
 X \otimes Z \ar[r]^{\eta_Z} \ar[d]_{f \otimes 1} & Z \otimes X \ar[d]^{1 \otimes f} \\
 Y \otimes Z \ar[r]_{\nu_Z} & Z \otimes Y 
 }$$
 commutes for any object $Z$ in $\CC$.  The tensor structure is defined as $(X , \eta) \otimes (Y, \nu): = (X \otimes Y, \gamma)$
 where $\gamma_Z$ is defined as the composition
 $$\xymatrix{(X \otimes Y) \otimes Z  \ar[r]^\alpha &
 X \otimes (Y \otimes Z) \ar[r]^{1 \otimes \nu_Z} &
 X \otimes (Z \otimes Y ) \ar[dl]_{\alpha^{-1}} & \\
& (X \otimes Z ) \otimes Y \ar[r]^{\eta_Z \otimes 1} &
 (Z \otimes X) \otimes Y \ar[r]^{\alpha} &
 Z \otimes (X \otimes Y).
  }$$
  
  The center $\ZZ(\CC)$ is moreover {\it{braided}} and the braiding for the pair $(X, \eta), (Y , \nu)$ is precisely 
  the map $\eta_Y$.
  
  The center $\ZZ(Vect(G, \omega))$ of the tensor category
  $Vect(G, \omega)$ contains the information necessary for constructing the quasi-Hopf algebra that is known as the Twisted
  Drinfeld Double $D^\omega(G)$ of the group $G$ twisted by $\omega$ (see \cite[\S 3.2]{Dijkgraaf}).
 
 \subsection{Weak Morita equivalence of tensor categories}
 \label{subsection Weak Morita equivalence of tensor categories}
 
 Two tensor categories $\CC$ and $\DD$ are {\it{weakly Morita equivalent}} if there exists an
  indecomposable right module category $\MM$ over $\CC$ such that $\CC_\MM^*$ and $\DD$
   are tensor equivalent \cite[Def 4.2]{MugerI}. In \cite[Prop. 4.6]{MugerI} it is shown that weak
    Morita equivalence is an equivalence relation, and
     in \cite[Thm. 3.1]{W-GT-fusion-cat} it is shown that two tensor categories are weak Morita
      equivalent if and only if their centers are braided equivalent.
 In particular we have that for $\MM$ an indecomposable module
 category over $\CC$ there is a canonical equivalence of braided tensor categories
 $\ZZ(\CC) \simeq \ZZ(\CC_\MM^*)$ \cite[Prop. 2.2]{Ost-2}.
 
  \section{The dual of $\VV(G, \omega)$ with respect to $\MM(A \backslash G, \mu)$}
 Let us consider the tensor category $\CC = \VV(G, \omega)$ and the right module
 category $\MM=\MM(A \backslash G, \mu)$ described in \S \ref{subsection indecomposable module categories}. In this chapter we will review the main results of \cite {Naidu} where explicit conditions
 are stated under which the dual category $\CC_\MM^*$ is pointed. For the sake of completeness and clarity we will review the constructions done in \S 3 and \S 4 of \cite{Naidu} and we will
 reinterpret the equations  given there in the terminology that we have set up in \S \ref{subsection Abelian group extensions} and \S \ref{subsection Cohomology of groups and the Lyndon-Hochschild-Serre spectral sequence}.
  
 \subsection{Conditions for $\CC_\MM^*$ to be pointed} Let us setup some notation for this section:
let  $K := A \backslash G$, $u : K \to G$ satisfy $p \circ u =1_G$ and $u(p(1_G))=1_G$ for $p:G \to K$ the projection, $\kappa: K \times G \to A$ satisfy  $u(k)g = \kappa_{k,g} u(k \trl g)$
and $K^A$ the elements of $K$ fixed under the conjugation by elements of $A$.
 The module category $\MM(A \backslash G, \mu)$ is the skeletal category whose simple objects
 are the elements of $K=A \backslash G$, whose tensor structure is $k \otimes g := k \trl g$ for $k\in K$ and $g \in G$ and whose associativity constraint $\mu$ satisfies $\delta_G \mu^{-1} = \pi^* \omega$, see \eqref{delta mu = omega}
In what follows  we will focus on parametrizing the
invertible objects of $\CC_\MM^*$.
 
 Following \cite[Lemma 3.2]{Naidu} any invertible module functor in $\CC_\MM^*$ is of the form $(F_y, \gamma)$
 where the functor $F_y: \MM \to \MM$ is the one that extends the $G$-equivariant map $f_y : K \to K$,   $f_y(k)=p(u(y)u(k))$ for $y \in K^A$, and $\gamma$ is a functorial isomorphism 
 $\gamma_{k,g} : F_y(k \otimes g) \stackrel{\cong}{\to} F_y(k) \otimes g$
 that satisfies the pentagon axiom. Writing $\gamma_{k,g} := \gamma(k;g) id_{p(u(y)u(k \trl g))}$
for $\gamma \in C^1(G, \Map(K, \Cx))$ we have that the pentagon axiom of a module functor translates into the equation
$$ \mu(k;g_1,g_2) \gamma(k \trl g_1; g_2) \gamma(k;g_1) = \gamma(k; g_1g_2) \mu(f_y(k); g_1,g_2)$$
which can also be written as
$$\delta_G \gamma (k; \gamma_1,\gamma_2) = \frac{\mu(f_y(k); g_1,g_2)}{\mu(k;g_1,g_2) }.$$
The inverse of $(F_y, \gamma)$ is the module functor $(F_{p(u(y)^{-1})}, \bar{\gamma})$
with $$\bar{\gamma}(k; g):= \gamma(p(u(y)^{-1}u(x))^{-1}; g)^{-1}.$$

Defining for each $ y \in K^A$ the set
$$Fun_y := \left\{ \gamma \in C^1(G, \Map(K,\Cx)) | \delta_G \gamma(k; g_1,g_2) =\frac{\mu(f_y(k); g_1,g_2)}{\mu(k;g_1,g_2) } \right\}$$
for all $k \in K$ and $g_1, g_2 \in G$, we have that the set of invertible objects of $\CC_\MM^*$ are precisely the module functors $(F_y, \gamma)$ where $y \in K^A$ and $\gamma \in Fun_y$. To simplify the notation we will denote such module functor by the pair $(y,\gamma)$.

 Two invertible module functors $(y_1, \gamma^1)$ and $(y_2, \gamma^2)$ in $\CC_\MM^*$  
 are isomorphic if and only if $y_1=y_2$ and if there exists natural transformation parameterized by a map $\eta \in C^0(G,\Map(K,\Cx))$ satisfying the equation
 \begin{align}
 \gamma^1(k;g) \eta(k)=\eta(k \trl g) \gamma^2(k;g) \label{equation isomorphic module functors}
 \end{align}
 for all $k \in K$ and $g \in G$. These equations can be rewritten as the equation
 $$\delta_G \eta = \frac{\gamma^2}{\gamma^1}$$
 in $C^1(G, \Map(K,\Cx))$. Therefore for each $y \in K^A$ we may define an equivalence
 relation on the elements $\gamma^1, \gamma^2 \in Fun_y$ by setting that $\gamma^2 \simeq \gamma^1$ whenever there exist $\eta$ such that $\delta_G \eta = \frac{\gamma^2}{\gamma^1}$; denote the by $\overline{Fun}_y$ the associated set of equivalence classes.  

For each $y  \in K^A$ let us choose an element $\gamma_y \in Fun_y$, and note that the maps
$$Fun_y \to Z^1(G, \Map(K,\Cx)), \ \beta \mapsto \frac{\beta}{\gamma_y}, \ \ \ \ Z^1(G, \Map(k,\Cx)) \to Fun_y, \ \epsilon \mapsto \epsilon \gamma_y$$
are inverse to each other. Therefore we obtain bijections
$$\overline{Fun}_y \cong H^1(G, \Map(K, \Cx)) \cong H^1(A, \Cx) ={{\mathbb{A}}}$$
which are realized by the maps
\begin{align}
\zeta_y : &{{\mathbb{A}}} \to Fun_y, & \zeta_y(\rho):=& \gamma_y \varphi(\rho) \nonumber\\
\theta_y: &Fun_y \to {{\mathbb{A}}}, & \theta_y(\beta) :=& \psi(\beta / \gamma_y). \label{maps from widehatA to Funy}
\end{align}

  Recall from \cite[Def. 2.2]{ENO} that the {\it{ global dimension}} $dim(\CC)$ of a fusion 
 category $\CC$ is the sum of the squared norms of its simple objects, and note that by \cite[Thm. 2.15 ]{ENO} we have  $dim(\CC_\MM^*)=dim(\CC)$ whenever $\CC$ is a fusion category and $\MM$ is an indecomposable module category over $\CC$. 
 
  Let us suppose now that the dual category $\CC_\MM^*=\VV(G,\omega)_{\MM(A \backslash G, \mu)}^*$
  is pointed. Therefore its global dimension 
  $$dim(\CC_\MM^*)= | {{\mathbb{A}}}||K^A|$$ must be equal to the number
  of isomorphic classes of invertible objects, since on pointed categories all simple objects are invertible. On the other hand, by \cite[Thm. 2.15 ]{ENO} we have that
  $dim(\CC_\MM^*)=dim(\CC)$ and $dim(\CC)=|G|$. Therefore in order for the category
  $\CC_\MM^*$ to be pointed it is necessary that $| {{\mathbb{A}}}||K^A|=|G|$. Since $|G|=|A||K|$, $|{{\mathbb{A}}}| \leq |A|  $ and $|K^A| \leq |K|$, the equality holds if and only if $A$ is abelian, thus having that $|{{\mathbb{A}}}|= |A|$, and if $A$ is normal in $G$ and $K^A=K$.
  
  On the other hand, if $A$ is abelian and normal on $G$, then the number of isomorphism classes of
  invertible objects in $\CC_\MM^*$ is $|{{\mathbb{A}}}||K|=|G|$. Since $dim(\CC_\MM^*)=dim(\CC)=|G|$ then we have that the set of isomorphism classes of invertible objects exhaust the set of simple elements, and therefore $\CC_\MM^*$ must be pointed.
  
  Summarizing we have
  \begin{theorem} \cite[Thm 3.4]{Naidu}  \label{Theorem conditions for pointed}
  The tensor category $\CC_\MM^*=\VV(G,\omega)_{\MM(A \backslash G, \mu)}^*$ is pointed if and only if $A$ is abelian and normal in $G$ and the cohomology class $[\frac{\mu \trl y}{\mu}]$ is trivial in $H^2(G, \Map(K, \Cx))$ for all $y \in K$.

    \end{theorem}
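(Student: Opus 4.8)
Nearly all the ingredients needed here were already assembled in the parametrization of the invertible objects of $\CC_\MM^*$ above, and the proof amounts to combining them with the equality $dim(\CC_\MM^*)=dim(\CC)=|G|$ furnished by \cite[Thm. 2.15]{ENO}. First I would record the bookkeeping statement: the isomorphism classes of invertible objects of $\CC_\MM^*$ are in bijection with $\bigsqcup_{y\in K^A}\overline{Fun}_y$; the set $\overline{Fun}_y$ is empty precisely when $Fun_y$ is empty, and whenever $Fun_y\neq\emptyset$ the maps $\zeta_y,\theta_y$ of \eqref{maps from widehatA to Funy} descend to a bijection $\overline{Fun}_y\cong H^1(A,\Cx)=\mathbb{A}$, so that $|\overline{Fun}_y|=|\mathbb{A}|$. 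Hence the number of isomorphism classes of invertible objects of $\CC_\MM^*$ equals $|\mathbb{A}|\cdot|\{y\in K^A : Fun_y\neq\emptyset\}|$.

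Then I would isolate two elementary facts. First, writing a coset as $k=Ag$ one computes $k\trl a = Aga$, which equals $k$ for every $a\in A$ exactly when $gag^{-1}\in A$ for all $g$; thus $K^A=K$ if and only if $A$ is normal in $G$. Second, when $A$ is normal the map $f_y$ is left translation by $y$ on $K=G/A$, so the defining condition $\delta_G\gamma(k;g_1,g_2)=\mu(f_y(k);g_1,g_2)/\mu(k;g_1,g_2)$ of $Fun_y$ reads simply $\delta_G\gamma=(\mu\trl y)/\mu$; since $\pi^*\omega$ is $K$-invariant, applying $\trl y$ to the identity $\delta_G\mu^{-1}=\pi^*\omega$ of \eqref{delta mu = omega} gives $\delta_G(\mu^{-1}\trl y)=\pi^*\omega$, so $(\mu\trl y)/\mu$ is a $2$-cocycle in $C^*(G,\Map(K,\Cx))$ and $Fun_y\neq\emptyset$ if and only if the class $[(\mu\trl y)/\mu]$ vanishes in $H^2(G,\Map(K,\Cx))$.

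With this in hand both implications follow from the dimension count. If $\CC_\MM^*$ is pointed, then every simple object is invertible and contributes $1$ to the global dimension, so
\[
|G|=dim(\CC_\MM^*)=|\mathbb{A}|\cdot|\{y\in K^A : Fun_y\neq\emptyset\}|\le |\mathbb{A}|\,|K^A|\le |A|\,|K|=|G|,
\]
and equality throughout forces $|\mathbb{A}|=|A|$ (so $A$ is abelian), $K^A=K$ (so $A$ is normal, by the first fact), and $Fun_y\neq\emptyset$ for every $y\in K^A=K$ (so, by the second fact, $[(\mu\trl y)/\mu]$ is trivial for all $y\in K$). Conversely, if $A$ is abelian and normal and each $[(\mu\trl y)/\mu]$ is trivial, then $K^A=K$ and every $Fun_y$ is nonempty, so the invertible simple objects of $\CC_\MM^*$ number $|\mathbb{A}|\,|K|=|A|\,|K|=|G|=dim(\CC_\MM^*)$; since they already account for the whole global dimension there can be no further simple object, and $\CC_\MM^*$ is pointed. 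The one step I expect to require genuine care is the second fact: verifying that $(\mu\trl y)/\mu$ is automatically $\delta_G$-closed, and that its class in $H^2(G,\Map(K,\Cx))$ is exactly the obstruction to solving the pentagon equation for the candidate invertible functor $F_y$; once that is in place, everything else is the dimension bookkeeping above.
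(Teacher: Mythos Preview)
Your proposal is correct and follows essentially the same argument as the paper: both parametrize the invertible objects by $\bigsqcup_{y\in K^A}\overline{Fun}_y$, invoke the dimension equality $dim(\CC_\MM^*)=|G|$ from \cite{ENO}, and deduce that $A$ must be abelian and normal with every $Fun_y$ nonempty. If anything, your version is slightly more careful in explicitly separating out the condition $Fun_y\neq\emptyset$ (equivalently, the triviality of $[(\mu\trl y)/\mu]$) in the count, whereas the paper folds this into the discussion somewhat implicitly and records it in the remark following the theorem.
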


  Note that since $A$ is normal in $G$, we may use the notation introduced in \S \ref{subsection Cohomology of groups and the Lyndon-Hochschild-Serre spectral sequence} so that $\mu(f_y(k);g_1,g_2)=\mu(yk;g_1,g_2)= (\mu \trl y)(k;g_1,g_2).$
Since we have that $\delta_G \mu^{-1} = \pi^* \omega = \delta_G (\mu^{-1} \trl y)$, the quotient $\frac{\mu \trl y}{\mu}$ defines a cocycle in $Z^2(G, \Map(K, \Cx))$. The equation $\delta_G \gamma_y  = \frac{ \mu \trl y}{\mu }$ implies that the quotient is trivial in cohomology.

  \subsection{The Grothendieck ring of the pointed category  $\CC_\MM^*$} \label{Subsection The Grothendieck ring of the pointed category}
  From now on we will assume that the dual category $\CC_\MM^*$ is pointed. Therefore we 
  have that $A$ is abelian and normal in $G$ and that we can choose elements $\gamma_y \in 
  C^1(G, \Map(K, \Cx))$ for each $y \in K$ such that $\delta_G \gamma_y = \frac{\mu \trl y}{\mu}$.
  
  The Grothendieck ring $K_0(\CC_\MM^*)$ of the category $\CC_\MM^*$ is the ring defined by the semi-ring whose elements are the isomorphism classes of objects and whose product is the one induced by the tensor product. Since
  $\CC_\MM^*$ is pointed then $K_0(\CC_\MM^*)$ is isomorphic to the group ring $\integer[\Lambda]$ for some finite group $\Lambda$. In this section we will recall the construction of this isomorphism carried out in \cite[Thm. 4.5]{Naidu}.
  
  The tensor product of two invertible elements $(y_1,\gamma^1)$,  $(y_2,\gamma^2)$ in $\CC_\MM^*$ as defined in \S \ref{subsection Dual category} is
  $$(y_1,\gamma^1) \otimes (y_2,\gamma^2) = (y_1y_2,(\gamma^1 \trl y_2)\gamma^2).$$
  This tensor product defines a group structure on the set of isomorphism classes of invertible objects
  $$\Lambda:= \bigcup_{y \in K } \{y\} \times \overline{Fun}_y $$ 
  by the equation $(y_1,[\gamma^1]) \star (y_2,[\gamma^2]) = (y_1y_2, [(\gamma^1 \trl y_2)\gamma^2])$ where $[\gamma]$ denotes the equivalence class of $\gamma$ in $Fun_y$.
  
  Define the element $\gamma \in C^1(K, C^1(G, \Map(K, \Cx)))$ by the equation
  $$\gamma(y):=\gamma_y$$ and note that the equations $\delta_G \gamma_y = \frac{\mu \trl y}{\mu}$
  are equivalent to the equation
  $$\delta_G \gamma = \delta_K \mu.$$
  Define the element $\tilde{\nu} := \delta_K \gamma$, i.e. $\tilde{\nu}(y_1,y_2)= \frac{\gamma(y_2) \gamma(y_1)\trl y_2}{\gamma(y_1y_2)}$, and note that 
  $$\delta_K \tilde{\nu} = \delta_K^2 \gamma = 1 \ \ \mbox{and} \ \ 
\delta_G \tilde{\nu} = \delta_G \delta_K \gamma= \delta_K \delta_G \gamma = \delta_K^2 \mu=1.$$  
  Hence $\tilde{\nu} \in Z^2(K, Z^1(G,\Map(K,\Cx)))$ and we may define
  \begin{align} \label{definition nu}{\nu} := \psi \circ \tilde{\nu} \in Z^2(K, Z^1(A,\Cx))= Z^2(K, {{\mathbb{A}}})
  \end{align}
  thus having ${\nu}(y_1,y_2)(a):= \tilde{\nu}(y_1,y_2)(1;a)$.
  
  With this 2-cocycle ${\nu}$ we may define the crossed product $K \ltimes_{{\nu}} {{\mathbb{A}}}$
  by setting on pairs of elements of the set $K \times {{\mathbb{A}}}$ 
  $$(y_1, \rho_1) \cdot (y_2, \rho_2) := (y_1y_2, \rho_1^{y_2} \rho_2 {{\nu}}(y_1,y_2)).$$
  
 Using the notation of \eqref{maps from widehatA to Funy} we have
  
  \begin{theorem} \cite[Thm. 4.5]{Naidu}
  The map $$T : K \ltimes_{{\nu}}{{\mathbb{A}}} \to \Lambda, \ \ \ T((y, \rho))= (y,[\zeta_y(\rho)])$$
  is an isomorphism of groups. Hence $K_0(\CC_\MM^*) \cong \integer [ K \ltimes_{{\nu}} {{\mathbb{A}}}]$.
  \end{theorem}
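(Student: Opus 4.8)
The plan is to show that $T$ is a set-theoretic bijection with inverse assembled from the fibrewise maps $\theta_y$ of \eqref{maps from widehatA to Funy}, and then to verify the homomorphism property by a direct cochain computation which, via the multiplicative chain maps $\psi,\varphi$ of Lemma \ref{lemma varphi} and the $K$-equivariance established in Lemma \ref{lemma iso varphi}, reduces to the two defining relations $\tilde\nu=\delta_K\gamma$ and $\nu=\psi\circ\tilde\nu$.

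For bijectivity, fix $y\in K$. The map $\zeta_y$ lands in $Fun_y$, since $\varphi(\rho)$ is a $\delta_G$-cocycle and hence $\delta_G\zeta_y(\rho)=\delta_G\gamma_y=\frac{\mu\trl y}{\mu}$, and $\theta_y(\zeta_y(\rho))=\psi(\varphi(\rho))=\rho$ by Lemma \ref{lemma varphi}. Moreover $\theta_y$ is constant on equivalence classes: if $\gamma^2/\gamma^1=\delta_G\eta$ with $\eta\in C^0(G,\Map(K,\Cx))$ then $\psi(\delta_G\eta)=\delta_A(\psi\eta)=1$, because $\psi$ is a chain map and $A$ acts trivially on $\Cx$ so that every $0$-cochain of $A$ has trivial coboundary; conversely each $\beta\in Fun_y$ is equivalent to $\gamma_y\,\varphi(\psi(\beta/\gamma_y))=\zeta_y(\theta_y(\beta))$ again by Lemma \ref{lemma varphi}. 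Hence $\zeta_y$ and $\theta_y$ descend to mutually inverse bijections $\mathbb{A}\cong\overline{Fun}_y$. As sets, $K\ltimes_\nu\mathbb{A}$ and $\Lambda$ are the disjoint unions over $y\in K$ of $\{y\}\times\mathbb{A}$ and $\{y\}\times\overline{Fun}_y$, and $T$ acts as the identity on the $y$-coordinate and fibrewise as the descent of $\zeta_y$, so $T$ is a bijection with inverse $(y,[\gamma])\mapsto(y,\theta_y(\gamma))$.

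To see that $T$ is a homomorphism, recall from \S\ref{Subsection The Grothendieck ring of the pointed category} that $T((y_1,\rho_1)\cdot(y_2,\rho_2))=(y_1y_2,[\zeta_{y_1y_2}(\rho_1^{y_2}\rho_2\,\nu(y_1,y_2))])$ and $T(y_1,\rho_1)\star T(y_2,\rho_2)=(y_1y_2,[(\zeta_{y_1}(\rho_1)\trl y_2)\,\zeta_{y_2}(\rho_2)])$; since $\theta_{y_1y_2}$ is injective on equivalence classes it suffices to check that it sends the second representative to $\rho_1^{y_2}\rho_2\,\nu(y_1,y_2)$. Expanding $\zeta_{y_i}(\rho_i)=\gamma_{y_i}\varphi(\rho_i)$ and using that $\trl y_2$ is multiplicative,
$$ (\zeta_{y_1}(\rho_1)\trl y_2)\,\zeta_{y_2}(\rho_2)=\bigl[(\gamma_{y_1}\trl y_2)\,\gamma_{y_2}\bigr]\,(\varphi(\rho_1)\trl y_2)\,\varphi(\rho_2)=\gamma_{y_1y_2}\,\tilde\nu(y_1,y_2)\,(\varphi(\rho_1)\trl y_2)\,\varphi(\rho_2), $$
the last step being the identity $\tilde\nu=\delta_K\gamma$. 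Dividing by $\gamma_{y_1y_2}$ and applying the multiplicative chain map $\psi$ yields $\psi(\tilde\nu(y_1,y_2))\cdot\psi(\varphi(\rho_1)\trl y_2)\cdot\psi(\varphi(\rho_2))$; here the first factor is $\nu(y_1,y_2)$ by \eqref{definition nu}, the third is $\rho_2$ by Lemma \ref{lemma varphi}, and the middle one is $\rho_1^{y_2}$ by the identity $\psi(\varphi(\alpha)\trl k)=\alpha^k$ established in the proof of Lemma \ref{lemma iso varphi}. Since $\mathbb{A}$ is abelian, the product is $\rho_1^{y_2}\rho_2\,\nu(y_1,y_2)$, as required.

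Finally, since $\CC_\MM^*$ is pointed its simple objects are exactly its invertible objects, whose isomorphism classes form the group $\Lambda$; hence $K_0(\CC_\MM^*)=\integer[\Lambda]\cong\integer[K\ltimes_\nu\mathbb{A}]$ through $T$. I expect the only delicate point to be the use of the identity $\psi(\varphi(\alpha)\trl k)=\alpha^k$ at the level of cochains, rather than merely in cohomology: this is precisely what makes $\nu=\psi\circ\tilde\nu$, and not some cohomologous cocycle, the correct twisting datum for the crossed product, and fortunately it is already available from the explicit computation in the proof of Lemma \ref{lemma iso varphi}. Everything else is routine bookkeeping with the multiplicative maps $\psi$, $\varphi$ and the right action $\trl$.
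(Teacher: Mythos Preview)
Your proof is correct and follows essentially the same route as the paper: both verify the homomorphism property by showing that $\theta_{y_1y_2}\bigl((\zeta_{y_1}(\rho_1)\trl y_2)\,\zeta_{y_2}(\rho_2)\bigr)=\rho_1^{y_2}\rho_2\,\nu(y_1,y_2)$, with the paper carrying this out by direct evaluation at $(1;a)$ and you by invoking the multiplicativity of $\psi$ together with the cochain identity $\psi(\varphi(\alpha)\trl k)=\alpha^k$ from Lemma~\ref{lemma iso varphi}. Your additional paragraph on bijectivity merely spells out what the paper already recorded just above the theorem, so there is no genuine difference in strategy.
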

  \begin{proof}On the one hand we have
  \begin{align*}
  T((y_1, \rho_1) \cdot (y_2,\rho_2))=&T( (y_1y_2, \rho_1^{y_2} \rho_2 {{\nu}}(y_1,y_2))) \\ = &(y_1y_2,[\zeta_{y_1y_2}(\rho_1^{y_2} \rho_2 {{\nu}}(y_1,y_2))])
  \end{align*}
  and on the other
  \begin{align*}
  T((y_1, \rho_1)) \star T((y_2,\rho_2)) =& (y_1,[\zeta_{y_1}(\rho_1)]) \star (y_2,[\zeta_{y_2}(\rho_2)]) \\
  =& (y_1y_2, [( \zeta_{y_1}(\rho_1) \trl y_2) \zeta_{y_2}(\rho_2)])
  \end{align*}
  The result follows if we check the equality $$\theta_{y_1y_2}((\zeta_{y_1}(\rho_1) \trl y_2) \zeta_{y_2}(\rho_2)) = \rho_1^{y_2} \rho_2 {{\nu}}(y_1,y_2)$$
  since this implies that $\zeta_{y_1y_2}((\rho_1 \trl y_2) \rho_2 {{\nu}}(y_1,y_2))$ and $(\zeta_{y_1}(\rho_1) \trl y_2) \zeta_{y_2}(\rho_2)$ are cohomologous; hence we have
 
  \begin{align*}
  \theta_{y_1y_2}((\zeta_{y_1}(\rho_1) \trl y_2) \zeta_{y_2}(\rho_2))(a) =&  \frac{((\zeta_{y_1}(\rho_1) \trl y_2) (1;a)) \zeta_{y_2}(\rho_2) (1;a)}{\gamma(y_1y_2)(1;a)}\\
  =& \frac{(\gamma(y_1)\trl y_2 \varphi(\rho_1) \trl y_2)(1;a) (\gamma(y_2) \varphi(\rho_2))(1;a)}{\gamma(y_1y_2)(1;a)}\\
  =& \delta_K \gamma (y_1,y_2)(1;a) \rho_1^{y_2}(a) \rho_2(a)\\
  =& ({{\nu}}(y_1,y_2) \rho_1^{y_2} \rho_2)(a).
  \end{align*}
  \end{proof}
  
    \subsection{A skeleton of the pointed category  $\CC_\MM^*$}
  A skeleton $sk(\CC_\MM^*)$ of $\CC_\MM^*$ is a full subcategory of $\CC_\MM^*$ on which each object of 
  $\CC_\MM^*$ is isomorphic to only one object in $sk(\CC_\MM^*)$. Let us choose for objects
  $$ob(sk(\CC_\MM^*)):= \{(y, \zeta_y(\rho)) | (y,\rho) \in K \ltimes_\nu {{\mathbb{A}}} \}$$  
  and define its tensor product $\bullet$ by the one induced by $\star$, i.e.
  $$ ((y_1, \zeta_{y_1}(\rho_1)) \bullet (y_2,\zeta_{y_2}(\rho_2)) : = (y_1y_2, \zeta_{y_1y_2}({{\nu}}(y_1,y_2) \rho_1^{y_2}\rho_1)).$$
  
  For each pair of objects, choose isomorphisms in $\CC_\MM^*$
  \begin{align*}
  f((y_1, \zeta_{y_1}(\rho_1)),  (y_2,\zeta_{y_2}(\rho_2)) : (y_1, \zeta_{y_1}(\rho_1)) \bullet (y_2,\zeta_{y_2}(\rho_2) )\stackrel{\sim}{\to} (y_1, \zeta_{y_1}(\rho_1)) \tensor (y_2,\zeta_{y_2}(\rho_2) )
  \end{align*}
  which by equation \eqref{equation isomorphic module functors} satisfy
  \begin{align*}
 (( \zeta_{y_1}(\rho_1) \trl y_2) \zeta_{y_1}(\rho_1)) (k;g) = \frac{ f((y_1, \zeta_{y_1}(\rho_1)),  (y_2,\zeta_{y_2}(\rho_2))(k \trl g)}{ f((y_1, \zeta_{y_1}(\rho_1)),  (y_2,\zeta_{y_2}(\rho_2))(k)}&\\ \times \zeta_{y_1y_2}&({{\nu}}(y_1 ,y_2) \rho_1^{y_2}\rho_1)(k;g).
  \end{align*}
  
  The tensor product $\otimes$ in $\CC_\MM^*$ is associative since it is defined by the composition of functors, but the tensor product $\bullet$ in its skeleton $sk(\CC_\MM^*)$ may fail to be associative.
  The associativity constraint for $sk(\CC_\MM^*)$ is then
  \begin{align*}
  \widehat{\omega}'((y_1, \zeta_{y_1}(\rho_1)),  (y_2,\zeta_{y_2}(\rho_2)),(y_3 &,\zeta_{y_3}(\rho_3))) \\
  =& \frac{f((y_1, \zeta_{y_1}(\rho_1)),  (y_2,\zeta_{y_2}(\rho_2)) \otimes Id_{(\zeta_{y_3}(\rho_3),y_3)}}{f((y_1, \zeta_{y_1}(\rho_1)),  (y_2,\zeta_{y_2}(\rho_2))\bullet (y_3,\zeta_{y_3}(\rho_3)))} \\ &\times  \frac{f((y_1, \zeta_{y_1}(\rho_1))\bullet  (y_2,\zeta_{y_2}(\rho_2)),(y_3,\zeta_{y_3}(\rho_3)))}{ Id_{(\zeta_{y_1}(\rho_1),y_1)} \otimes f((y_2,\zeta_{y_2}(\rho_2)),(y_3,\zeta_{y_3}(\rho_3))). }
  \end{align*}
  
  In \cite[Thm. 4.9]{Naidu} it is shown that $\widehat{\omega}'$ is $K$-invariant and moreover that it can be given in explicit form by the equation
   \begin{align*}
     \widehat{\omega}'((y_1, \zeta_{y_1}(\rho_1)),  (y_2,\zeta_{y_2}(\rho_2)),(y_3 &,\zeta_{y_3}(\rho_3))) = \tilde{\nu}(y_1,y_2)(1;u(y_3)) \ \rho_1(\kappa_{y_2,u(y_3)}).
   \end{align*}
   Therefore we may define the 3-cocycle on $K \ltimes_{{\nu}} {{\mathbb{A}}}$ by the equation
    \begin{align*}  \widehat{\omega}((y_1, \rho_1),  (y_2,\rho_2),(y_3 ,\rho_3)) = \tilde{\nu}(y_1,y_2)(1;u(y_3)) \ \rho_1(\kappa_{y_2,u(y_3)}),
    \end{align*}
  and choosing $G = A \rtimes_F K$ and $u(y)=(1,y)$ as it was done at the end of \S \ref{subsection Abelian group extensions}, the 3-cocycle on $K \ltimes_{{\nu}} {{\mathbb{A}}}$ becomes
    \begin{align}  \label{definition widehat omega}
     \widehat{\omega}((y_1, \rho_1),  (y_2,\rho_2),(y_3 ,\rho_3)) = \tilde{\nu}(y_1,y_2)(1;(1,y_3)) \ \rho_1(F(y_2,y_3)).
   \end{align}
   Therefore the skeleton $sk(\CC_\MM^*)$ of $\CC_\MM^*$ becomes isomorphic to $\VV(K \ltimes_{{\nu}} {{\mathbb{A}}}, \widehat{\omega})$ which is equivalent to $Vect(K \ltimes_{{\nu}} {{\mathbb{A}}}, \widehat{\omega})$. Therefore we can conclude with
   \begin{theorem} \cite[Thm. 4.9]{Naidu}
   The fusion categories $\CC_\MM^*=\VV(G,\omega)_{\MM(A \backslash G, \mu)}^*$ and
   $Vect(K \ltimes_{{\nu}} {{\mathbb{A}}}, \widehat{\omega})$ are equivalent.
   \end{theorem}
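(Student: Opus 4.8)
The plan is to realize $\CC_\MM^*$ explicitly as $Vect(K \ltimes_\nu \mathbb{A}, \widehat\omega)$ through the skeleton $sk(\CC_\MM^*)$ built above. A fusion category is tensor equivalent to any of its skeleta, and $\VV(\Gamma,\theta) \simeq Vect(\Gamma,\theta)$ for every finite group $\Gamma$ and $\theta \in Z^3(\Gamma,\Cx)$, so it is enough to exhibit a tensor isomorphism $\VV(K\ltimes_\nu\mathbb{A},\widehat\omega) \stackrel{\sim}{\to} sk(\CC_\MM^*)$. For this I would take the bijection $(y,\rho)\mapsto(y,\zeta_y(\rho))$ of \cite[Thm. 4.5]{Naidu}, which identifies the underlying set of $K\ltimes_\nu\mathbb{A}$ with $ob(sk(\CC_\MM^*))$ and which, by the definition of $\bullet$, already intertwines the group law of $K\ltimes_\nu\mathbb{A}$ with the tensor product. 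Thus everything reduces to showing that, for a suitable choice of the structure isomorphisms $f$, the associativity constraint $\widehat\omega'$ of $sk(\CC_\MM^*)$ coincides with $\widehat\omega$ of \eqref{definition widehat omega}.

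To that end I would first pin down the $f\big((y_1,\zeta_{y_1}(\rho_1)),(y_2,\zeta_{y_2}(\rho_2))\big)$. Each is a natural module transformation, so by \eqref{equation isomorphic module functors} it is encoded by a normalized cochain $\eta\in C^0(G,\Map(K,\Cx))$ whose coboundary $\delta_G\eta$ is the ratio of the module structure on the honest tensor product $1\otimes 2$ to that on the $\bullet$-product $1\bullet 2$; such an $\eta$ exists because $\psi$ of that ratio is trivial (the computation inside the proof of \cite[Thm. 4.5]{Naidu}), and it is unique up to a global constant since $Z^0(G,\Map(K,\Cx))$ is just the constants ($G$ acts transitively on $K$). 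I would normalize so that $\psi(\eta)=\eta(1_K)=1$ for every pair. By its definition, $\widehat\omega'$ is the scalar obtained by composing $f((1\bullet 2),3)$, $f(1,2)\otimes Id$ and the inverses of $Id\otimes f(2,3)$ and $f(1,(2\bullet 3))$; using the composition rule $(\eta\otimes\eta')(M)=\eta_{F_2'(M)}\circ F_1(\eta'_M)$ for natural module transformations, and the fact that in the pointed case $F_y$ extends $k\mapsto yk$, its value at an object $k\in\MM$ works out (with $\eta_{(\cdot,\cdot)}$ the cochain attached to the corresponding $f$) to
$$ \eta_{(1,(23))}(k)^{-1}\,\eta_{(2,3)}(k)^{-1}\,(\eta_{(1,2)}\trl y_3)(k)\,\eta_{((12),3)}(k). $$

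Next I would show this is $K$-invariant, hence a single scalar: applying $\delta_G$ and replacing every $\delta_G\eta$ by its value, all $\zeta$-terms cancel and the two surviving arguments of $\zeta_{y_1y_2y_3}$ agree by the cocycle identity $\delta_K\nu=1$, so $\delta_G[\widehat\omega']=1$; transitivity of the $G$-action on $K$ forces $\widehat\omega'$ to be constant. Evaluating at $k=1_K$, the normalization kills three of the four factors, and the surviving one, $(\eta_{(1,2)}\trl y_3)(1_K)=\eta_{(1,2)}(1_K\trl u(y_3))$, is read off from $\delta_G\eta_{(1,2)}$ at $(1_K;u(y_3))$ using $\zeta_y(\rho)(k;g)=\gamma_y(k;g)\rho(\kappa_{k,g})$, $\kappa_{1_K,u(y_3)}=1_A$ and $\tilde\nu=\delta_K\gamma$, giving
$$ \widehat\omega'\big((y_1,\zeta_{y_1}(\rho_1)),(y_2,\zeta_{y_2}(\rho_2)),(y_3,\zeta_{y_3}(\rho_3))\big)=\tilde\nu(y_1,y_2)(1;u(y_3))\,\rho_1(\kappa_{y_2,u(y_3)}). $$
This depends on the three objects only through the labels in $K\ltimes_\nu\mathbb{A}$, hence descends to a normalized $3$-cocycle $\widehat\omega$ there (automatically, being an associativity constraint); specializing to $G=A\rtimes_F K$, $u(y)=(1_A,y)$ turns $\kappa_{y_2,(1_A,y_3)}$ into $F(y_2,y_3)$ and recovers exactly \eqref{definition widehat omega}. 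Composing the resulting tensor isomorphism $\VV(K\ltimes_\nu\mathbb{A},\widehat\omega)\cong sk(\CC_\MM^*)$ with $\CC_\MM^*\simeq sk(\CC_\MM^*)$ and $\VV(K\ltimes_\nu\mathbb{A},\widehat\omega)\simeq Vect(K\ltimes_\nu\mathbb{A},\widehat\omega)$ completes the proof. The main obstacle is the middle computation: the maps $f$ connect objects of $\CC_\MM^*$ that need not be skeletal, so one must track precisely which functor each natural module transformation is precomposed with when building $f(1,2)\otimes Id$ and $Id\otimes f(2,3)$, and keep the four normalizations of the $\eta$'s consistent; assembling the $\delta_G$-bookkeeping into $\delta_K\nu=1$ is delicate but uses nothing beyond the identities already established for $\mu$, $\gamma_y$, $\tilde\nu$ and $\kappa$.
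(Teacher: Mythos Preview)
Your proposal is correct and follows essentially the same route as the paper: pass to the skeleton $sk(\CC_\MM^*)$, identify its simple objects with $K\ltimes_\nu\mathbb{A}$ via the bijection $T$ of \cite[Thm.~4.5]{Naidu}, and then compute the associativity constraint $\widehat{\omega}'$ of the skeleton to match it with $\widehat{\omega}$. The paper itself does not carry out the last computation but simply cites \cite[Thm.~4.9]{Naidu} for the explicit formula $\widehat{\omega}'=\tilde{\nu}(y_1,y_2)(1;u(y_3))\,\rho_1(\kappa_{y_2,u(y_3)})$; your sketch of that computation---normalizing the $\eta$'s by $\eta(1_K)=1$, checking $K$-invariance via $\delta_G\widehat{\omega}'=1$, and evaluating at $k=1_K$---is a correct reconstruction of what Naidu does and supplies the detail the present paper omits.
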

   
   Applying the results of \S \ref{subsection Weak Morita equivalence of tensor categories} we have
   \begin{cor} \label{corollary weak Morita equivalence}
   The categories $Vect( A \rtimes_F K, \omega)$ and $Vect(K \ltimes_{{\nu}} {{\mathbb{A}}}, \widehat{\omega})$ are
   weakly Morita equivalent. Hence their centers are are canonically equivalent 
   $$\ZZ(Vect( A \rtimes_F K, \omega)) \simeq \ZZ(Vect(K \ltimes_{{\nu}} {{\mathbb{A}}}, \widehat{\omega}))$$
   as braided tensor categories
   \end{cor}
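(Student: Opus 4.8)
The plan is to derive this as a formal consequence of the preceding theorem together with the generalities on weak Morita equivalence recalled in \S\ref{subsection Weak Morita equivalence of tensor categories}. First I would observe that $\MM = \MM(A\backslash G,\mu)$ is, by construction, an indecomposable right module category over $\CC = \VV(G,\omega)$: its set of simple objects is the coset space $A\backslash G$, which is a transitive right $G$-set, so $\MM$ cannot split as a direct sum of non-trivial module categories. With the concrete choices $G = A\rtimes_F K$ and $u(y) = (1,y)$ fixed at the end of \S\ref{subsection Abelian group extensions}, and under the standing hypothesis that $\CC_\MM^*$ is pointed (so that $A$ is abelian and normal in $G$ and the cocycles $\nu$ and $\widehat\omega$ of \eqref{definition nu} and \eqref{definition widehat omega} are defined), the previous theorem supplies a tensor equivalence $\CC_\MM^* \simeq Vect(K\ltimes_\nu{\mathbb A},\widehat\omega)$.

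Next I would invoke the definition of weak Morita equivalence \cite[Def.\ 4.2]{MugerI}: since $\MM$ is an indecomposable right module category over the tensor category $Vect(A\rtimes_F K,\omega) \simeq \VV(G,\omega)$ whose dual $\CC_\MM^*$ is tensor equivalent to $Vect(K\ltimes_\nu{\mathbb A},\widehat\omega)$, the categories $Vect(A\rtimes_F K,\omega)$ and $Vect(K\ltimes_\nu{\mathbb A},\widehat\omega)$ are by definition weakly Morita equivalent. (Alternatively one could phrase this via \cite[Thm.\ 3.1]{W-GT-fusion-cat}, but the direct route is shorter.)

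Finally, for the statement about centers I would appeal to the canonical braided equivalence $\ZZ(\CC)\simeq\ZZ(\CC_\MM^*)$ attached to any indecomposable module category $\MM$ over $\CC$ \cite[Prop.\ 2.2]{Ost-2}, recalled at the end of \S\ref{subsection Weak Morita equivalence of tensor categories}, and compose it with the braided equivalence on centers induced by the tensor equivalence $\CC_\MM^*\simeq Vect(K\ltimes_\nu{\mathbb A},\widehat\omega)$ (the center is functorial on tensor categories and sends tensor equivalences to braided equivalences). This produces the asserted canonical braided tensor equivalence $\ZZ(Vect(A\rtimes_F K,\omega))\simeq\ZZ(Vect(K\ltimes_\nu{\mathbb A},\widehat\omega))$. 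The only things needing even a word of justification are the indecomposability of $\MM$ and that the hypotheses of the cited structural results hold, both immediate from the assumption that $\CC_\MM^*$ is pointed; I therefore expect no genuine obstacle and would keep the written proof to two or three lines.
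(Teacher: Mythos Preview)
Your proposal is correct and matches the paper's approach exactly: the paper derives this corollary in one line by ``applying the results of \S\ref{subsection Weak Morita equivalence of tensor categories}'' to the preceding theorem, which is precisely what you do (the previous theorem gives the tensor equivalence $\CC_\MM^*\simeq Vect(K\ltimes_\nu{\mathbb A},\widehat\omega)$, and then the definition of weak Morita equivalence together with \cite[Prop.~2.2]{Ost-2} finishes it). Your written proof can indeed be kept to a couple of lines.
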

   
   \section{Weak Morita equivalence classes of group-theoretical tensor categories}
   
We are interested in classifying group theoretical tensor categories of a specific global dimension up to weak Morita equivalence. For this purpose we will fix the group $G= A \rtimes_F K$ with $A$ abelian and normal in $G$ and $F \in Z^2(K,A)$, and we will give an explicit description of the cocycles
$\omega \in Z^3(A \rtimes_F K, \Cx)$ and $\widehat{\omega} \in Z^3(K \ltimes_{{\nu}} {{\mathbb{A}}},  \Cx)$
   such that the tensor categories $\VV( A \rtimes_F K, \omega)$ and $\VV( K \ltimes_{{\nu}} {{\mathbb{A}}}, \widehat{\omega})$ are weakly Morita equivalent.
   
   \subsection{Description of $\omega$, $\mu$ and $\gamma$} \label{subsection omega nu}
   
   In Theorem \ref{Theorem conditions for pointed} and in \S \ref{Subsection The Grothendieck ring of the pointed category}  we have seen the conditions needed for the 
   tensor category $\CC_\MM^*=\VV(G,\omega)_{\MM(A \backslash G, \mu)}^*$ to be pointed. In particular we have seen that we need
   the existence of $\gamma \in C^1(K, C^1(G, \Map(K, \Cx)))$ such that 
   $$\delta_G \gamma= \delta_K \mu.$$
   Since we also have that $\delta_G \mu^{-1}= \pi^* \omega$ we can obtain the following lemma.
   
   \begin{lemma} \label{lemma omega cohomologous to tilde nu}
     The cocycles $\pi^* \omega$ and $\tilde{\nu}$ are cohomologous in $\Tot(C^*(K, C^*(G,\Map(K, \Cx))))$. 
   \end{lemma}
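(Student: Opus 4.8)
The plan is to exhibit an explicit element of the total complex whose total differential realizes the required cohomology. Recall we are working in $\Tot(C^{*,*})$ where $C^{p,q}=C^p(K,C^q(G,\Map(K,\Cx)))$ and the total differential is $\delta_{Tot}=\delta_K\oplus(\delta_G)^{(-1)^p}$. The two cocycles to be compared live in different bidegrees: $\pi^*\omega$ sits in bidegree $(0,3)$ (it is a $K$-invariant $3$-cochain on $G$, so a $0$-cochain on $K$ with values in $C^3(G,\Map(K,\Cx))$), while $\tilde\nu=\delta_K\gamma$ sits in bidegree $(2,1)$ (a $2$-cochain on $K$ with values in $Z^1(G,\Map(K,\Cx))$). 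Since $\delta_{Tot}$ mixes bidegrees, both can be $\delta_{Tot}$-cocycles of the same total degree $3$, and the natural candidate for a primitive of their difference is built from $\mu$ and $\gamma$.

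First I would record carefully the total degrees and the signs. The element $\mu\in C^2(G,\Map(K,\Cx))$, viewed as $\mu\in C^{0,2}$, has total degree $2$; applying $\delta_{Tot}$ gives $\delta_{Tot}\mu = \delta_K\mu \oplus (\delta_G\mu)^{(\pm1)}$. By \eqref{delta mu = omega} we have $\delta_G\mu^{-1}=\pi^*\omega$, i.e. $\delta_G\mu = (\pi^*\omega)^{-1}$ up to the sign convention, and by the hypothesis in the lemma's setup we have $\delta_G\gamma=\delta_K\mu$ where $\gamma\in C^{1,1}$ has total degree $2$. So the combination $\mu\cdot\gamma^{\mp1}$ (with the exponent chosen to match the $(\delta_G)^{(-1)^p}$ sign in degree $p=1$ versus $p=0$) should have total differential with $K$-component $\delta_K\mu / \delta_K\mu = $ trivial on the relevant bidegree and $G$-component equal to $\pi^*\omega$ times a boundary term, while picking up $\delta_K\gamma=\tilde\nu$ in the top bidegree. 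Concretely I would set $\beta := \mu\,\gamma^{\epsilon}\in C^{0,2}\oplus C^{1,1}\subset\Tot^2$ for the appropriate sign $\epsilon=\pm1$ and compute $\delta_{Tot}\beta$, checking it equals $(\pi^*\omega)\cdot\tilde\nu^{-1}$ (again up to sign) in $\Tot^3 = C^{0,3}\oplus C^{1,2}\oplus C^{2,1}$, with the $C^{1,2}$ component vanishing precisely because of $\delta_G\gamma=\delta_K\mu$.

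The main obstacle is purely bookkeeping: getting the signs in $\delta_{Tot}=\delta_K\oplus(\delta_G)^{(-1)^p}$ exactly right across the three bidegrees $(0,2)$, $(1,1)$ and the resulting $(0,3),(1,2),(2,1)$, and making sure that the cross-term $\delta_K\gamma$ matches the definition $\tilde\nu=\delta_K\gamma$ with the correct orientation. There is no conceptual difficulty — once $\beta$ is written down, the identity $\delta_{Tot}\beta = \pi^*\omega\cdot\tilde\nu^{-1}$ (or its inverse) follows from the three input equations $\delta_G\mu^{-1}=\pi^*\omega$, $\delta_G\gamma=\delta_K\mu$, and $\delta_K\tilde\nu=1=\delta_G\tilde\nu$ by expanding each component of the total differential and cancelling. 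I would therefore present the proof as: define $\beta$, expand $\delta_{Tot}\beta$ bidegree by bidegree, invoke the three relations, and conclude that $\pi^*\omega$ and $\tilde\nu$ differ by $\delta_{Tot}\beta$, hence are cohomologous in $\Tot(C^*(K,C^*(G,\Map(K,\Cx))))$.
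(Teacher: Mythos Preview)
Your proposal is correct and matches the paper's proof essentially line for line: the paper takes $\beta=\mu\oplus\gamma\in C^{0,2}\oplus C^{1,1}$ (so your sign is $\epsilon=+1$), expands $\delta_{Tot}(\mu\oplus\gamma)=\delta_G\mu\oplus\delta_K\mu\cdot\delta_G\gamma^{-1}\oplus\delta_K\gamma$, and uses the three relations you listed to conclude $\pi^*\omega\cdot\delta_{Tot}(\mu\oplus\gamma)=\tilde\nu$. The relations $\delta_K\tilde\nu=1=\delta_G\tilde\nu$ are not actually needed here (they verify $\tilde\nu$ is a cocycle, which is already forced once the other identity is established).
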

 
   \begin{proof}
Recall the definition of the double complex
$C^*(K, C^*(G,\Map(K, \Cx)))$
given in \S \ref{subsubsection Double complex}, and note
that we have $\pi^* \omega \in C^{0,3}$, $\mu \in C^{0,2}$, 
$\gamma \in  C^{1,1}$ and $\tilde{\nu} = \delta_K \gamma \in C^{2,1}$, satisfying $\pi^* \omega \cdot \delta_G \mu=1$ and $ \delta_K \mu \cdot \delta_G \gamma^{-1}=1$.

Consider the element $\mu \oplus \gamma \in \Tot^2$ and note that
$$\delta_{Tot} (\mu \oplus \gamma) = (\delta_K \oplus \delta_G^{(-1)^p}) (\mu \oplus \gamma) = \delta_G \mu \oplus \delta_K \mu \cdot \delta_G \gamma^{-1} \oplus \delta_K \gamma.$$

Therefore 
$$\pi^* \omega  \cdot \delta_{Tot} (\mu \oplus \gamma) = \tilde{\nu}.$$
\end{proof}

 Lemma \ref{lemma omega cohomologous to tilde nu} implies further conditions on the cohomology class of $\omega$ for the
   tensor category $\CC_\MM^*=\VV(G,\omega)_{\MM(A \backslash G, \mu)}^*$ to be pointed.
   \begin{cor} \label{omega in 2,1 and 3,0}
     If the 
   tensor category $\CC_\MM^*=\VV(G,\omega)_{\MM(A \backslash G, \mu)}^*$ is pointed then $\omega$ is cohomologous to a cocycle that lives 
    in $C^{2,1} \oplus C^{3,0}$ of the double complex that induces the Lyndon-Hochschild-Serre spectral sequence.
   \end{cor}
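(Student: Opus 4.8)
The plan is to read this off from Lemma~\ref{lemma omega cohomologous to tilde nu} together with the Proposition asserting that the inclusion $C^*(G,\Map(K,\Cx))^K\hookrightarrow\Tot(C^*(K,C^*(G,\Map(K,\Cx))))$ is a quasi-isomorphism. First I would unpack the hypothesis: if $\CC_\MM^*=\VV(G,\omega)_{\MM(A\backslash G,\mu)}^*$ is pointed, then by Theorem~\ref{Theorem conditions for pointed} the subgroup $A$ is abelian and normal in $G$ and $[\frac{\mu\trl y}{\mu}]$ vanishes in $H^2(G,\Map(K,\Cx))$ for every $y\in K$; choosing $\gamma_y\in C^1(G,\Map(K,\Cx))$ with $\delta_G\gamma_y=\frac{\mu\trl y}{\mu}$ and assembling them into $\gamma\in C^{1,1}$ satisfying $\delta_G\gamma=\delta_K\mu$, we are exactly in the situation required by Lemma~\ref{lemma omega cohomologous to tilde nu}.

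Next I would locate the relevant cochains in the double complex: $\pi^*\omega\in C^{0,3}$, $\mu\in C^{0,2}$, $\gamma\in C^{1,1}$ and $\tilde{\nu}:=\delta_K\gamma\in C^{2,1}$. As recorded in \S\ref{Subsection The Grothendieck ring of the pointed category}, $\tilde{\nu}$ is annihilated by both differentials, $\delta_K\tilde{\nu}=\delta_K^2\gamma=1$ and $\delta_G\tilde{\nu}=\delta_K\delta_G\gamma=\delta_K^2\mu=1$, so $\tilde{\nu}\in Z^2(K,Z^1(G,\Map(K,\Cx)))$ is a genuine $3$-cocycle of the total complex concentrated in bidegree $(2,1)$; in particular it lies in $C^{2,1}\oplus C^{3,0}$. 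Lemma~\ref{lemma omega cohomologous to tilde nu}, via the explicit identity $\pi^*\omega\cdot\delta_{Tot}(\mu\oplus\gamma)=\tilde{\nu}$ from its proof, then gives $[\pi^*\omega]=[\tilde{\nu}]$ in $H^3(\Tot(C^*(K,C^*(G,\Map(K,\Cx)))))$.

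Finally I would transport this back to ordinary group cohomology: by the quasi-isomorphism Proposition, $C^q(G,\Map(K,\Cx))^K\cong C^q(G,\Cx)$ sits inside $\Tot$ as a quasi-isomorphic subcomplex and carries $\omega$ to $\pi^*\omega$, so under the induced isomorphism $H^*(G,\Cx)\cong H^*(\Tot)$ the class of $\omega$ corresponds to $[\pi^*\omega]=[\tilde{\nu}]$. Hence, once $\omega$ is identified with $\pi^*\omega$, it is cohomologous in the total complex to the cocycle $\tilde{\nu}$, which lives in $C^{2,1}\subseteq C^{2,1}\oplus C^{3,0}$; this is the assertion.

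I do not anticipate a genuine obstacle: the content is packaged in Lemma~\ref{lemma omega cohomologous to tilde nu}, and what remains is bookkeeping. The two points that need a little care are (i) checking that $\tilde{\nu}$ is literally a cocycle of $\Tot$ sitting in a single bidegree --- this is the double-cocycle computation recalled above --- and (ii) being precise that ``cohomologous'' here is meant in $\Tot$ after identifying $\omega$ with $\pi^*\omega$ via the Proposition. The $C^{3,0}$ summand allowed in the statement is not actually needed for this corollary, since $\tilde{\nu}$ sits purely in $C^{2,1}$; it is, however, the natural slack to keep for \S\ref{section examples} and Theorem~\ref{main theorem}, where explicit coordinates produce a representative that genuinely splits into a $C^{2,1}$ part and a $C^{3,0}$ part.
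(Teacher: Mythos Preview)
Your proposal is correct and follows the paper's approach exactly: the paper states the corollary without proof, treating it as an immediate consequence of Lemma~\ref{lemma omega cohomologous to tilde nu}, and your argument simply spells out that deduction in detail, including the identification of $\omega$ with $\pi^*\omega$ via the quasi-isomorphism Proposition and the observation that $\tilde{\nu}$ already sits in $C^{2,1}$. Your remark that the $C^{3,0}$ summand is unnecessary slack at this stage is also accurate.
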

  
 \begin{rem} Note that this implies that the cohomology class of $\omega$ belongs to the subgroup of 
 $H^3(G, \Cx) $ defined as
 $$\Omega(G;A) := \ker \left( \ker \left( H^3(G, \Cx) \to E^{0,3}_\infty \right) \to E^{1,2}_\infty  \right)$$
 which fits into the short exact sequence
 $$1 \to E^{3,0}_\infty \to \Omega(G;A) \to E^{2,1}_\infty \to 1.$$
   
   The cohomology classes in $\Omega(G;A)$ are the only cohomology classes such that
   $\CC_\MM^*=\VV(G,\omega)_{\MM(A \backslash G, \mu)}^*$ is pointed.
\end{rem}
      
   In what follows we will construct explicit representatives for $\omega$ and $\mu$, but for this purpose we will start by constructing explicit 3-cocyles in
  $\Tot(C^*(K, C^*(G,\Map(K, \Cx))))$ which appear in    
  $\Omega(G;A)$. Let us start by determining the second differential $d_2 : E_2^{2,1} \to E_2^{4,0}$.

  \begin{lemma}
  The second differential $d_2 : E_2^{2,1} \to E_2^{4,0}$ is isomorphic to the homomorphism
  $$H^2(K, {{\mathbb{A}}}) \to H^4(K, \Cx), \ \ [\widehat{F}] \mapsto [(\widehat{F} \wedge F)^{-1}]$$
  where $(\widehat{F} \wedge F) (k_1,k_2,k_3,k_4):= \widehat{F}(k_1,k_2)(F(k_3,k_4))$.
  \end{lemma}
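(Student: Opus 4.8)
The plan is to track the chosen cochains through the total complex and read off the image of the transgression-type differential $d_2$ directly. Recall from Lemma \ref{lemma omega cohomologous to tilde nu} that we have at our disposal $\mu \in C^{0,2}$ and $\gamma \in C^{1,1}$ with $\delta_G\mu^{-1}=\pi^*\omega$ and $\delta_G\gamma=\delta_K\mu$, and that $\tilde\nu=\delta_K\gamma \in C^{2,1}$ represents the class in $E_2^{2,1}$ corresponding, under $\psi$, to $\nu\in Z^2(K,\mathbb{A})$; write $[\widehat F]=[\nu]$ for a general class in $H^2(K,\mathbb{A})$. To compute $d_2[\widehat F]$ I would lift a cocycle representing $\widehat F$ on $E_1^{2,1}=C^2(K,H^1(A,\Cx))$ to the total complex: first lift along $\psi$ using the map $\varphi$ of Lemma \ref{lemma varphi}, obtaining an element $\varphi(\widehat F) \in C^2(K,Z^1(G,\Map(K,\Cx)))=C^{2,1}$; then, since $\delta_G\varphi(\widehat F)=1$ but $\delta_K\varphi(\widehat F)$ need not vanish, I must correct by an element of $C^{1,2}$ whose $\delta_G$ cancels $\delta_K\varphi(\widehat F)$, and the $\delta_K$ of that correction, pushed down to $C^{4,0}=C^4(K,\Cx)$, is $d_2[\widehat F]$ up to sign.

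**Key computation.** The heart of the matter is the failure of $\varphi$ to be a $K$-module map, which Lemma \ref{lemma iso varphi} controls only at the level of cohomology. Concretely, with $G=A\rtimes_F K$ and $u(k)=(1,k)$ so that $\kappa_{k_1,(a,k_2)}={}^{k_1}aF(k_1,k_2)$, one computes $\delta_K\varphi(\widehat F)$ explicitly on arguments $(k_1,k_2,k_3;g)$ for $g=(a,k)\in G$. Using that $\rho\in\mathbb{A}$ is a homomorphism $A\to\Cx$, the discrepancy between $\varphi(\widehat F)\trl k_3$ and $\varphi(\widehat F^{\,k_3})$ is measured by the values $\rho(F(k_3,k))$-type terms coming from $\kappa$, and assembling these one finds that $\delta_K\varphi(\widehat F)=\delta_G\sigma$ where $\sigma\in C^{1,2}$ is built from $\widehat F$ evaluated against $F$ via the pairing $\widehat F(k_1,k_2)(F(\cdot,\cdot))$. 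I would then check that $\delta_K\sigma \in C^{2,2}$ vanishes after applying $\psi$ — equivalently, that one more correction in $C^{0,3}$ can be absorbed — but in fact the relevant next stage lands in $C^{4,0}$: following the standard zig-zag for $d_2$, after the two corrections the residual cocycle in $C^4(K,\Cx)$ is precisely $(\widehat F\wedge F)^{-1}$ where $(\widehat F\wedge F)(k_1,k_2,k_3,k_4)=\widehat F(k_1,k_2)(F(k_3,k_4))$. The sign/inverse is dictated by the convention $\delta_{Tot}=\delta_K\oplus(\delta_G)^{(-1)^p}$ fixed in \S\ref{subsubsection Double complex} and by the fact that $\delta_G\mu^{-1}=\pi^*\omega$ carries a $(-1)$.

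**Main obstacle.** The genuinely delicate step is the bookkeeping in identifying $\sigma\in C^{1,2}$ with $\delta_G\sigma=\delta_K\varphi(\widehat F)$: this requires expanding $\kappa_{k\trl g_1\cdots g_{i-1},g_i}$ for $g_i\in G$ in terms of the semidirect-product structure, separating the $A$-part from the $K$-part, and using the cocycle identity $\delta_K F=1$ to see that all terms not of the form $\widehat F(\cdot,\cdot)(F(\cdot,\cdot))$ cancel. Everything else — that the construction is well defined on cohomology, that it agrees with the abstract $d_2$ of the LHS spectral sequence, and that $\varphi$ and $\psi$ induce the stated isomorphisms $E_2^{2,1}\cong H^2(K,\mathbb{A})$ and $E_2^{4,0}\cong H^4(K,\Cx)$ — is immediate from the Propositions and Lemmas already established in \S\ref{subsection Cohomology of groups and the Lyndon-Hochschild-Serre spectral sequence}. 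I would therefore organize the proof as: (i) recall the identifications of $E_2^{2,1}$ and $E_2^{4,0}$; (ii) fix representatives and perform the two-step lift; (iii) do the $\kappa$-expansion to pin down $\sigma$; (iv) read off $(\widehat F\wedge F)^{-1}$ and note that the wedge indeed lands in $Z^4(K,\Cx)$ because $\widehat F,F$ are cocycles.
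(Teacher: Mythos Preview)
Your strategy matches the paper's exactly: lift $\widehat F$ via $\varphi$ to $C^{2,1}$, compute $\delta_K\varphi(\widehat F)$, find a $\delta_G$-primitive for it, and apply $\delta_K$ once more to read off $d_2$. You also land on the correct formula $(\widehat F\wedge F)^{-1}$, and your description of the correcting cochain as ``$\widehat F$ evaluated against $F$ via the pairing $\widehat F(k_1,k_2)(F(\cdot,\cdot))$'' is precisely the paper's
\[
u(k_1,k_2,k_3)(x):=\widehat F(k_1,k_2)(F(k_3,x)).
\]

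There is, however, a systematic bidegree error in your write-up that would make the argument as written incorrect. Since $\varphi(\widehat F)\in C^{2,1}$ one has $\delta_K\varphi(\widehat F)\in C^{3,1}$, so its $\delta_G$-preimage must live in $C^{3,0}$, not in $C^{1,2}$; an element of $C^{1,2}$ has $\delta_G$ landing in $C^{1,3}$, which cannot cancel anything in $C^{3,1}$. Once the preimage $u\in C^{3,0}$ is found, $\delta_K u$ lands directly in $C^{4,0}$ and equals $\widehat F\wedge F$, so that $\delta_{Tot}(\varphi(\widehat F)\oplus u^{-1})=(\widehat F\wedge F)^{-1}$. The zig-zag for $d_2:E_2^{2,1}\to E_2^{4,0}$ is thus the single step
\[
C^{2,1}\xrightarrow{\ \delta_K\ }C^{3,1}\xleftarrow{\ \delta_G\ }C^{3,0}\xrightarrow{\ \delta_K\ }C^{4,0},
\]
not two steps, and there is no passage through $C^{2,2}$ or $C^{0,3}$. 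Finally, the opening reference to $\mu,\gamma,\tilde\nu$ is extraneous: the lemma computes $d_2$ for an arbitrary class in $H^2(K,\mathbb{A})$ and does not involve any choice of $\omega$ or module category.
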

   
   \begin{proof}
 First recall that \begin{align*}E_2^{2,1} &= H^2(K, H^1(G, \Map(K, \Cx))) \cong H^2(K, \Hom(A, \Cx)) = H^2(K, {{\mathbb{A}}})\\
   E_2^{4,0} & = H^4(K, H^0(G, \Map(K, \Cx))) = H^4(K, \Map(K, \Cx)^G) \cong H^4(K, \Cx). \end{align*}
   
   Take $\widehat{F} \in Z^2(K, {{\mathbb{A}}})$ and use the map $\varphi$ of Lemma \ref{lemma varphi} to lift this cocycle to $\varphi(\widehat{F}) \in C^2(K, Z^1(G,\Map(K,\Cx)))$; in coordinates:
   \begin{align*}
   \varphi(\widehat{F})(k_1,k_2)(x_1,(a_2,x_2)) &= \widehat{F}(k_1,k_2)(\kappa_{x_1,(a_2,x_2)}) = 
   \widehat{F}(k_1,k_2)({}^{x_1}a_2 F(x_1,x_2))\\
   &= \widehat{F}(k_1,k_2)({}^{x_1}a_2) \  \widehat{F}(k_1,k_2)(F(x_1,x_2)).
   \end{align*}
   Its boundary is
   \begin{align*}
   \delta_k \varphi(\widehat{F})(k_1,k_2,k_3)&(x_1, (a_2,x_2))\\ =& \widehat{F}(k_2,k_3)({}^{x_1}a_2 F(x_1,x_2))
    \widehat{F}(k_1k_2,k_3)({}^{x_1}a_2 F(x_1,x_2))^{-1}\\
   &  \widehat{F}(k_1,k_2k_3)({}^{x_1}a_2 F(x_1,x_2))
      \widehat{F}(k_1,k_2)({}^{k_3x_1}a_2 F(k_3x_1,x_2))^{-1}\\
      =& \widehat{F}(k_1,k_2)^{k_3}( F(x_1,x_2))
       \widehat{F}(k_1,k_2)( F(k_3x_1,x_2))^{-1}\\
       =&  \widehat{F}(k_1,k_2)\left( \frac{F(k_3, x_1)}{F(k_3,x_1x_2)} \right),
   \end{align*}
   and we can define $u \in C^3(K, C^0(G, \Map(K,\Cx)))$ as
   \begin{align*}
   u(k_1,k_2,k_3)(x) := \widehat{F}(k_1,k_2)(F(k_3,x)).
   \end{align*}
   On the one hand we have
\begin{align*}
  \delta_G u(k_1,k_2,k_3)(x_1,(a_2,x_2)) = & u(k_1,k_2,k_3)(x_1x_2) u(k_1,k_2,k_3)(x_1)^{-1}  \\
 =& \widehat{F}(k_1,k_2)\left( \frac{F(k_3, x_1x_2)}{F(k_3,x_1)} \right)
   \end{align*}
   and on the other
   \begin{align*}
  \delta_K u(k_1, & k_2,k_3,k_4)(x)  \\ =&  \widehat{F}(k_2,k_3)(F(k_4,x))
    \widehat{F}(k_1k_2,k_3)(F(k_4,x))^{-1}
    \widehat{F}(k_1,k_2k_3)(F(k_4,x))\\
     &\widehat{F}(k_1,k_2)(F(k_3k_4,x))^{-1}
    \widehat{F}(k_1,k_2)(F(k_3,k_4x))\\
    =&\widehat{F}(k_1,k_2)^{k_3}(F(k_4,x))\widehat{F}(k_1,k_2)(F(k_3k_4,x))^{-1}
    \widehat{F}(k_1,k_2)(F(k_3,k_4x))\\
  =&  \widehat{F}(k_1,k_2)(F(k_3,k_4)).
   \end{align*}
   Since $\delta_Gu=\delta_K \varphi(\widehat{F})$ we have that
   \begin{align*}
   \delta_{Tot} ( \varphi(\widehat{F}) \oplus u^{-1})= \delta_K \varphi(\widehat{F}) \delta_Gu \oplus \delta_k u^{-1}= (\widehat{F} \wedge F)^{-1};
   \end{align*}
   therefore $d_2 [\varphi(\widehat{F})]= [(\widehat{F} \wedge F)^{-1}]$.
   
   \end{proof}
   
   Suppose that $d_2 [\varphi(\widehat{F})]=0$, hence there exists $\epsilon \in C^3(K, \Cx)$
   such that $\delta_K \epsilon = \widehat{F} \wedge F$.
      Define $\bar{\epsilon} \in C^3(K,C^0(G, \Maps(K,\Cx)))$ by the equation
    $$\bar{\epsilon}(k_1,k_2,k_3)(x):= {\epsilon}(k_1,k_2,k_3)$$
   and note that $\delta_K \bar{\epsilon} = \widehat{F} \wedge F$ and that $\delta_G \bar{\epsilon}=1$.
   Hence the class $\varphi(\widehat{F}) \oplus \bar{\epsilon} u^{-1} \in C^{2,1} \oplus C^{3,0}$ defines a 3-cocycle in the total complex:
 \begin{align*}
 \varphi(\widehat{F}) \oplus \bar{\epsilon} u^{-1} \in Z^3 \Tot(C^*(K, C^*(G,\Map(K, \Cx)))).
 \end{align*}  
 Define $\beta \in C^2(K,C^0(G, \Maps(K,\Cx)))$ by the equation
 \begin{align*}
 \beta(k_1,k_2)(x):= \epsilon(k_1,k_2,x)
 \end{align*}
 and note that
 \begin{align}
 \delta_K \beta (k_1,k_2,k_3)(x) = &\epsilon(k_2,k_3,x) 
 \nonumber \epsilon(k_1k_2,k_3,x)^{-1} \epsilon(k_1,k_2k_3,x)  \epsilon(k_1,k_2,k_3x)^{-1} \\
 \nonumber =& \delta_K \epsilon(k_1,k_2,k_3,x)  \epsilon(k_1,k_2,k_3)^{-1}\\
 \label{equation delta K beta} =& \widehat{F}(k_1,k_2)((F(k_3,x)) \bar{\epsilon}(k_1,k_2,k_3)(x)^{-1}.
 \end{align}
 Therefore $\delta_K \beta \  \bar{\epsilon} \ u^{-1} =1$, hence we have that the class $\varphi(\widehat{F}) \delta_G \beta \in C^{2,1}$ is a 3-cocycle in the total complex and moreover that it is 
cohomologous to the class $\varphi(\widehat{F}) \oplus \bar{\epsilon} u^{-1}$, in coordinates:
\begin{align}
(\varphi(\widehat{F}) \delta_G \beta) (k_1,k_2)(x_1,(a_2,x_2)) =& \widehat{F}(k_1,k_2)({}^{x_1}a_2) \  \widehat{F}(k_1,k_2)(F(x_1,x_2)) \nonumber\\
& \epsilon(k_1,k_2,x_1x_2) \ \epsilon(k_1,k_2,x_1)^{-1}.
\label{varphi(widehat F) delta G beta}
\end{align}

 Summarizing the previous results:
 \begin{proposition} \label{proposition omega in 2,1 3,0}
 Every cohomology class which appears in $\Omega(G;A)$ can be represented by a 3-cocycle
 $\varphi(\widehat{F}) \delta_G \beta \in C^{2,1}$ with $\widehat{F} \in Z^2(K, {{\mathbb{A}}})$, $\beta(k_1,k_2)(x)=\epsilon'(k_1,k_2,x)$ and $\delta_K \epsilon' = \widehat{F} \wedge F$.
 \end{proposition}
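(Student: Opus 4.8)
The strategy will be to reverse the construction of the two paragraphs just above. Those paragraphs already establish one half of the claim: for every $\widehat{F}\in Z^2(K,{{\mathbb{A}}})$ and every $\epsilon'\in C^3(K,\Cx)$ with $\delta_K\epsilon'=\widehat{F}\wedge F$, the cochain $\varphi(\widehat{F})\,\delta_G\beta\in C^{2,1}$, where $\beta(k_1,k_2)(x):=\epsilon'(k_1,k_2,x)$, is a $3$-cocycle of the total complex cohomologous to $\varphi(\widehat{F})\oplus\bar{\epsilon}'\,u^{-1}$, and therefore represents a class in the filtration-$2$ subgroup $\Omega(G;A)$ of $H^3(G,\Cx)$. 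So what remains is the reverse inclusion: every class of $\Omega(G;A)$ can be put in this form, and I will do this by recovering the data $\widehat{F}$ and $\epsilon'$ from a given class. To start, fix $[\omega]\in\Omega(G;A)$. By the very definition of $\Omega(G;A)$ as the degree-$2$ part of the $K$-filtration on $H^3(G,\Cx)$ (see Corollary \ref{omega in 2,1 and 3,0} and the Remark following it), I may choose a total cocycle $\tilde\omega=a\oplus b$ representing $[\omega]$ with $a\in C^{2,1}$ and $b\in C^{3,0}$; unpacking $\delta_{Tot}\tilde\omega=1$ yields $\delta_G a=1$, the fact that $\delta_K a$ is $\delta_G$-exact, and $\delta_K b=1$. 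From $\delta_G a=1$ the cochain $a$ determines a class in $E_2^{2,1}=H^2(K,H^1(G,\Map(K,\Cx)))$, and transporting it through the $K$-module isomorphism $\widetilde{\psi}$ of Lemma \ref{lemma iso varphi} produces a class in $H^2(K,{{\mathbb{A}}})$; I take $\widehat{F}\in Z^2(K,{{\mathbb{A}}})$ to be a cocycle representing it.

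The central step will be to adjust $\tilde\omega$, inside its cohomology class, so that its $C^{2,1}$-component becomes exactly $\varphi(\widehat{F})$. By Lemmas \ref{lemma varphi} and \ref{lemma iso varphi} the map $\varphi$ realizes the inverse of $\widetilde{\psi}$ on cohomology, so $a$ and $\varphi(\widehat{F})$ agree in $E_2^{2,1}$; unwinding this, $a=\varphi(\widehat{F})\,\delta_K g\,\delta_G h$ for suitable $g\in C^1(K,Z^1(G,\Map(K,\Cx)))$ and $h\in C^{2,0}$. The key observation is that, since $g$ is valued in $Z^1(G,\Map(K,\Cx))$, one has $\delta_G g=1$, so the total coboundary $\delta_{Tot}(g^{-1}\oplus h^{-1})$ has no $C^{1,2}$-component; subtracting it from $\tilde\omega$ keeps the support inside $C^{2,1}\oplus C^{3,0}$ and replaces the $C^{2,1}$-part by $\varphi(\widehat{F})$. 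After this adjustment I may assume $\tilde\omega=\varphi(\widehat{F})\oplus b$.

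It then remains to identify $b$ and $\epsilon'$ from the surviving cocycle relations together with the computation of $d_2$ made above, which gave $\delta_K\varphi(\widehat{F})\cdot\delta_G u=1$ and $\delta_K u=\overline{\widehat{F}\wedge F}$ (the constant-in-$x$ extension) for $u(k_1,k_2,k_3)(x):=\widehat{F}(k_1,k_2)(F(k_3,x))$. The $C^{3,1}$-component of $\delta_{Tot}\tilde\omega=1$ forces $\delta_G(b\,u)=1$, and since $Z^0(G,\Map(K,\Cx))=\Map(K,\Cx)^G\cong\Cx$ consists of constant functions, this means $b=\bar{\epsilon}'\,u^{-1}$ for some $\epsilon'\in C^3(K,\Cx)$ (with $\bar{\epsilon}'$ its constant-in-$x$ extension, as above); the $C^{4,0}$-component $\delta_K b=1$ then reads $\delta_K\epsilon'=\widehat{F}\wedge F$. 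That this equation is solvable is exactly where $[\omega]\in\Omega(G;A)$ enters: under the exact sequence $1\to E^{3,0}_\infty\to\Omega(G;A)\to E^{2,1}_\infty\to1$ the class $[\omega]$ maps to the class of $\varphi(\widehat{F})$ in $E^{2,1}_\infty$, which is in particular $d_2$-closed, so $d_2[\varphi(\widehat{F})]=[(\widehat{F}\wedge F)^{-1}]=0$ in $H^4(K,\Cx)$. Finally, with $\beta(k_1,k_2)(x):=\epsilon'(k_1,k_2,x)$, equation \eqref{equation delta K beta} shows that $\tilde\omega$ and $\varphi(\widehat{F})\,\delta_G\beta$ differ by the total coboundary $\delta_{Tot}\beta$, so $[\omega]=[\varphi(\widehat{F})\,\delta_G\beta]$, which is the assertion.

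The only real obstacle I foresee is the normalization step in the second paragraph: replacing the $C^{2,1}$-component by $\varphi(\widehat{F})$ without creating a $C^{1,2}$-term uses that $\varphi$ and $\psi$ are inverse $K$-module isomorphisms only on cohomology — on cochains $\varphi$ is not even a $K$-map — and requires care with the sign $(-1)^p$ in $\delta_{Tot}=\delta_K\oplus\delta_G^{(-1)^p}$. Everything after it is the bookkeeping already carried out in the two preceding paragraphs, read in reverse.
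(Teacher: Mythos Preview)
Your proof is correct and follows essentially the same route as the paper's: both recover $\widehat{F}$ from the image of $[\omega]$ in $E_2^{2,1}$, then show that the remaining ambiguity is exactly a choice of $\epsilon'$ with $\delta_K\epsilon'=\widehat{F}\wedge F$, and finish by invoking equation \eqref{equation delta K beta}. The only difference is presentational: the paper uses the short exact sequence $1\to E_\infty^{3,0}\to\Omega(G;A)\to E_\infty^{2,1}\to1$ to argue that $[\omega]\cdot[\varphi(\widehat{F})\oplus\bar{\epsilon}u^{-1}]^{-1}$ lies in $E_\infty^{3,0}$ and hence is represented by some $\bar{\tau}$, setting $\epsilon'=\epsilon\tau$, whereas you normalize the cocycle representative directly by the coboundary $\delta_{Tot}(g^{-1}\oplus h^{-1})$ and then read off $\epsilon'$ from $bu$. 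Your worry about the normalization step is unfounded: since $g$ is chosen in $C^1(K,Z^1(G,\Map(K,\Cx)))$ one has $\delta_G g=1$ on the nose, so no $C^{1,2}$-term appears, exactly as you say.
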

 \begin{proof}
 Take $[\omega] \in \Omega(G;A)$ and let $[\widehat{F}] \in E^{2,1}_2$ be
 a representative of the cohomology class of the image of $[\omega]$ in $E^{2,1}_\infty$.
 Since $d_2 [\varphi(\widehat{F})]=0$ we know that
 the cohomology class  $[ \varphi(\widehat{F}) \oplus \bar{\epsilon} u^{-1}]$ constructed above belongs
 to $\Omega(G;A)$. Therefore we have that
 $$[\omega^{-1}] \cdot [ \varphi(\widehat{F}) \oplus \bar{\epsilon} u^{-1}] \in E^{3,0}_\infty$$
 and hence we can choose a representative cocycle $[\tau] \in H^3(K,\Cx)\cong E_2^{3,0}$  
 such that
 $$[\omega] = [ \varphi(\widehat{F}) \oplus \bar{\epsilon} \ \bar{\tau} \ u^{-1}]$$
with $\bar{\tau} \in C^3(K,C^0(G, \Maps(K,\Cx)))$ defined as
 $$\bar{\tau}(k_1,k_2,k_3)(x):= {\tau}(k_1,k_2,k_3).$$
 
Let $\epsilon':= \epsilon \tau$ and define $\beta \in C^2(K,C^0(G, \Maps(K,\Cx)))$ by the equation
 \begin{align*}
 \beta(k_1,k_2)(x):= \epsilon'(k_1,k_2,x).
 \end{align*}
 Equation \eqref{equation delta K beta} implies that $\delta_K \beta = (\bar{\epsilon} \ \bar{\tau})^{-1} u$ and therefore
 the proposition follows from the equation
 $$( \varphi(\widehat{F}) \oplus \bar{\epsilon} \ \bar{\tau} u^{-1}) \delta_{Tot} \beta = \varphi(\widehat{F}) \delta_G \beta \oplus \delta_K \beta  \  \bar{\epsilon}  \ \bar{\tau} u^{-1}=\varphi(\widehat{F}) \delta_G \beta.$$
 \end{proof}

   Now we need to find an explicit description of $\omega \in Z^3(G, \Cx)$  such that $\pi^*\omega$ and $\varphi(\widehat{F}) \delta_G \beta$ are cohomologous.

  \begin{theorem} \label{theorem omega mu gamma}
 Let $G = A \rtimes_F K$ and consider $\omega \in C^3(G, \Cx)$, $ \mu \in C^{0,2}$ and $\gamma \in C^{1,1}$ defined by the following equations:
 \begin{align*}
 \omega((a_1,x_1),(a_2,x_2),(a_3,x_3)) := & \widehat{F}(x_1,x_2)(a_3) \ \epsilon(x_1,x_2,x_3)\\
 \mu(x_1, (a_2,x_2),(a_3,x_3)) =& \left(\widehat{F}(x_1,x_2)(a_3)\ \epsilon(x_1,x_2,x_3) \right)^{-1}\\
 \gamma(y)(x_1,(a_2,x_2))=&  \widehat{F}(y,x_1)(a_2)\  \epsilon(y,x_1,x_2,). 
 \end{align*}
Then $\pi^* \omega \cdot (\delta_{Tot} \mu \oplus \gamma) =\varphi(\widehat{F}) \delta_G \beta$.
\end{theorem}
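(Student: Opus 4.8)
The plan is to check the asserted identity componentwise with respect to the bidegree decomposition of $\Tot(C^*(K,C^*(G,\Map(K,\Cx))))$. Since $\mu\in C^{0,2}$ and $\gamma\in C^{1,1}$, the computation carried out in the proof of Lemma~\ref{lemma omega cohomologous to tilde nu} shows that $\delta_{Tot}(\mu\oplus\gamma)$ has $C^{0,3}$-component $\delta_G\mu$, $C^{1,2}$-component $\delta_K\mu\cdot(\delta_G\gamma)^{-1}$, $C^{2,1}$-component $\delta_K\gamma$, and zero $C^{3,0}$-component. As $\pi^*\omega\in C^{0,3}$ and the right-hand side $\varphi(\widehat{F})\,\delta_G\beta$ lies entirely in $C^{2,1}$, the theorem is equivalent to the three relations
\begin{align*}
\text{(i)}&\quad \delta_G\mu^{-1}=\pi^*\omega,\\
\text{(ii)}&\quad \delta_G\gamma=\delta_K\mu,\\
\text{(iii)}&\quad \delta_K\gamma=\varphi(\widehat{F})\,\delta_G\beta,
\end{align*}
where $\varphi(\widehat{F})\,\delta_G\beta$ is the explicit cochain~\eqref{varphi(widehat F) delta G beta}. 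I would point out that (i) is precisely the module-category pentagon~\eqref{delta mu = omega} for $\MM(A\backslash G,\mu)$ and (ii) is precisely the condition $\delta_G\gamma=\delta_K\mu$ from \S\ref{Subsection The Grothendieck ring of the pointed category} that makes $\CC_\MM^*$ pointed, so verifying them also confirms that these explicit $\omega,\mu,\gamma$ genuinely arise from a pointed dual category; moreover, once (i) and (ii) are in place, the computation in the proof of Lemma~\ref{lemma omega cohomologous to tilde nu} applies verbatim and yields $\pi^*\omega\cdot\delta_{Tot}(\mu\oplus\gamma)=\delta_K\gamma$, so that (iii) closes the argument.

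Each of (i)--(iii) is then a direct substitution. First I would note that $\omega$, $\mu$ and $\gamma$ are normalized, which is immediate because $\widehat{F}$, $F$ and $\epsilon$ all become trivial whenever one of their arguments equals the identity. For the three verifications I would write every element of $G$ as $g_i=(a_i,x_i)$ and use: the multiplication $(a,x)(a',x')=(a\,{}^{x}a'\,F(x,x'),\,xx')$; the section $u(y)=(1,y)$, so that $\kappa_{k,(a,x)}={}^{k}a\,F(k,x)$ and $k\trl(a,x)=kx$; the $2$-cocycle identity for $\widehat{F}\in Z^2(K,{{\mathbb{A}}})$ in the form $\widehat{F}(k_2,k_3)\,\widehat{F}(k_1k_2,k_3)^{-1}\,\widehat{F}(k_1,k_2k_3)=\widehat{F}(k_1,k_2)^{k_3}$, evaluated at elements of $A$; and the defining relation $\delta_K\epsilon=\widehat{F}\wedge F$, i.e. $\delta_K\epsilon(k_1,k_2,k_3,k_4)=\widehat{F}(k_1,k_2)(F(k_3,k_4))$. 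In every case the pattern is the same: the $\widehat{F}$-valued factors reorganize via the $\widehat{F}$-cocycle identity, the $\epsilon$-valued factors collapse via $\delta_K\epsilon=\widehat{F}\wedge F$, and the two residual factors of the shape $\widehat{F}(-,-)(F(-,-))$ produced by these two mechanisms cancel against one another.

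Concretely, for (i) the expansion of $\delta_G\mu^{-1}(k;g_1,g_2,g_3)$ produces the $\widehat{F}$-block $\widehat{F}(kx_1,x_2)(a_3)\,\widehat{F}(k,x_1x_2)(a_3)^{-1}\,\widehat{F}(k,x_1)({}^{x_2}a_3)\,\widehat{F}(k,x_1)(F(x_2,x_3))$ and the $\epsilon$-block $\epsilon(kx_1,x_2,x_3)\,\epsilon(k,x_1x_2,x_3)^{-1}\,\epsilon(k,x_1,x_2x_3)\,\epsilon(k,x_1,x_2)^{-1}$; the $\widehat{F}$-cocycle identity collapses the first three $\widehat{F}$-factors to $\widehat{F}(x_1,x_2)(a_3)\,\widehat{F}(k,x_1)({}^{x_2}a_3)^{-1}$, while $\delta_K\epsilon=\widehat{F}\wedge F$ rewrites the $\epsilon$-block as $\epsilon(x_1,x_2,x_3)\,\widehat{F}(k,x_1)(F(x_2,x_3))^{-1}$, so that every factor involving the coset $k$ cancels and what remains is exactly $\widehat{F}(x_1,x_2)(a_3)\,\epsilon(x_1,x_2,x_3)=\pi^*\omega(k;g_1,g_2,g_3)$. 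For (ii) the analogous bookkeeping uses $\delta_K\epsilon(y,k,x_1,x_2)=\widehat{F}(y,k)(F(x_1,x_2))$ to match the $\epsilon$-factors and the $\widehat{F}$-cocycle identity to match the $\widehat{F}$-factors of $\delta_K\mu(y)$; for (iii) one uses $\delta_K\epsilon(y_1,y_2,k,x)=\widehat{F}(y_1,y_2)(F(k,x))$ and finds precisely the right-hand side of~\eqref{varphi(widehat F) delta G beta}. The only real difficulty is the bookkeeping itself: keeping straight the left $K$-action ${}^{\bullet}(\cdot)$ on $A$ against the induced right action on ${{\mathbb{A}}}$, the $F$-twist in the group law, and the signs and normalizations built into $\delta_{Tot}$, while arranging the cancellations so that the $\widehat{F}$-cocycle identity and $\delta_K\epsilon=\widehat{F}\wedge F$ do all the work. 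As a by-product, since $\varphi(\widehat{F})\,\delta_G\beta$ is a $\delta_{Tot}$-cocycle by Proposition~\ref{proposition omega in 2,1 3,0}, the identity forces $\delta_G\omega=1$, i.e. $\omega\in Z^3(G,\Cx)$.
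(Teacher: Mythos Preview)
Your proposal is correct and follows essentially the same approach as the paper: both reduce the total-complex identity to the three componentwise equalities $\delta_G\mu^{-1}=\pi^*\omega$, $\delta_G\gamma=\delta_K\mu$, and $\delta_K\gamma=\varphi(\widehat{F})\,\delta_G\beta$, and then verify each by a direct expansion in coordinates, using the $2$-cocycle identity for $\widehat{F}$ together with $\delta_K\epsilon=\widehat{F}\wedge F$ to organize the cancellations. One small slip: in your treatment of (i), the first three $\widehat{F}$-factors actually collapse all the way to $\widehat{F}(x_1,x_2)(a_3)$ (the $\widehat{F}(k,x_1)({}^{x_2}a_3)^{\pm1}$ terms already cancel among themselves), leaving only the residual $\widehat{F}(k,x_1)(F(x_2,x_3))$ to be killed by the $\epsilon$-block residue; your stated final answer is nonetheless correct.
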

\begin{proof}
Let us calculate: 
\begin{align*}
   \delta_G \mu ( x_1,&(a_2,x_2),(a_3,x_3),(a_4,x_4)) \\
 =&   \mu(x_1x_2, (a_3,x_3),(a_4,x_4)) \  \mu(x_1, (a_2{}^{x_2}a_3F(x_2,x_3),x_2x_3),(a_3,x_3))^{-1}\\
&  \mu(x_1, (a_2,x_2)(a_3{}^{x_3}a_4F(x_3,x_4),x_3x_4))
\ \mu(x_1, (a_2,x_2),(a_3,x_3))^{-1}\\
=& \widehat{F}(x_1x_2,x_3)(a_4)^{-1} \ \widehat{F}(x_1,x_2x_3)(a_4) \
\widehat{F}(x_1,x_2)(a_3{}^{x_3}a_4F(x_3,x_4))^{-1}\\
& \widehat{F}(x_1,x_2)(a_3) \ \epsilon(x_2,x_3,x_4)^{-1} \ \delta_K \epsilon(x_1,x_2,x_3,x_4)\\
=& \widehat{F}(x_2,x_3)(a_4)^{-1} \  \epsilon(x_2,x_3,x_4)^{-1},
\end{align*}
  and
  \begin{align*}
  \pi^* \omega(x_1,(a_2,x_2),(a_3,x_3),(a_4,x_4))=& \omega((a_2,x_2),(a_3,x_3),(a_4,x_4))\\ =& \widehat{F}(x_2,x_3)(a_4) \  \epsilon(x_2,x_3,x_4),
   \end{align*} 
 hence we have that $$\delta_G \mu \cdot \pi^*\omega =1.$$
 
 Now 
 \begin{align*}
 \delta_K\mu (y) ( x_1,(a_2,x_2),(a_3,x_3& )) \\
 =&   \mu(x_1, (a_2,x_2),(a_3,x_3))  \ \mu(yx_1, (a_2,x_2),(a_3,x_3))^{-1}\\
 =& \frac{\widehat{F}(yx_1,x_2)(a_3) \  \epsilon(yx_1,x_2,x_3)}{\widehat{F}(x_1,x_2)(a_3) \  \epsilon(x_1,x_2,x_3)},
 \end{align*}
 and
 \begin{align*}
 \delta_G \gamma(y&)(x_1,(a_2,x_2),(a_3,x_3)) \\
 = & \gamma(y)(x_1x_2,(a_3,x_3)) \ \gamma(y)(x_1,(a_2 {}^ {x_2}a_3F(x_2,x_3),x_2x_3))^{-1} \  \gamma(y)(x_1,(a_2,x_2)) \\
 = & \widehat{F} (y, x_1x_2)(a_3) \ \widehat{F}(y,x_1)(a_2 {}^ {x_2}a_3F(x_2,x_3))^{-1} \ \widehat{F}(y,x_1)(a_2)\\
 & \epsilon(y,x_1x_2,x_3) \  \epsilon(y,x_1,x_2x_3)^{-1}  \ \epsilon(y,x_1,x_2) \\
 =& \widehat{F}(yx_1,x_2)(a_3) \ \widehat{F}(x_1,x_2)(a_3)^{-1} \epsilon(yx_1,x_2,x_3) \ \epsilon(x_1,x_2,x_3)^{-1},
 \end{align*}
 hence we have that
 \begin{align*}
 \delta_K \mu  \cdot \delta_G \gamma^{-1} = 1
 \end{align*}
 
 Finally we calculate
 \begin{align*}
 \delta_K \gamma(k_1,k_2)&(x_1,(a_2,x_2)) \\
 =& \gamma(k_2)(x_1,(a_2,x_2)) \ \gamma(k_1k_2)(x_1,(a_2,x_2))^{-1} \ \gamma(k_1)(k_2x_1,(a_2,x_2))\\
 =& \widehat{F}(k_2,x_1)(a_2) \ \widehat{F}(k_1k_2,x_1)(a_2)^{-1} \ \widehat{F}(k_1,k_2x_2)(a_2) \\
 & \epsilon(k_2,x_1,x_2) \ \epsilon(k_1k_2,x_1,x_2)^{-1} \ \epsilon(k_1,k_2x_1,x_2)\\
 =& \widehat{F}(k_1,k_2)({}^{x_1}a_2)  \ \delta_K \epsilon(k_1,k_2,x_1,x_2) \ \epsilon(k_1,k_2,x_1x_2) \ \epsilon(k_1,k_2,x_1)^{-1}\\
 =& \widehat{F}(k_1,k_2)({}^{x_1}a_2)    \ \widehat{F}(k_1,k_2)(F(x_1,x_2)) \ \epsilon(k_1,k_2,x_1x_2)\  \epsilon(k_1,k_2,x_1)^{-1},
 \end{align*}
 and since by equation
 \eqref{varphi(widehat F) delta G beta} we have that
 \begin{align*}  (\varphi(\widehat{F}) \delta_G \beta) (k_1,k_2)(x_1,(a_2,x_2)) = &  \widehat{F}(k_1,k_2)({}^{x_1}a_2)    \ \widehat{F}(k_1,k_2)(F(x_1,x_2))\\
 &\epsilon(k_1,k_2,x_1x_2)\  \epsilon(k_1,k_2,x_1)^{-1}
 \end{align*}
 we have that
 $$\delta_K \gamma = \varphi(\widehat{F}) \delta_G \beta.$$
 Hence  $\pi^* \omega \cdot (\delta_{Tot} \mu \oplus \gamma) =\varphi(\widehat{F}) \delta_G \beta$.
   \end{proof}
   
   \subsection{Description of $\widehat{\omega}$ and $\nu$}
   Assuming the explicit descriptions of $\omega$, $\mu$ and $\gamma$ described in Theorem \ref{theorem omega mu gamma}, we see that $\tilde{\nu}= \varphi(\widehat{F}) \delta_G \beta$. Applying this explicit description of $\tilde{\nu}$ into the definition of $\nu$ given in \eqref{definition nu} and of $\widehat{\omega}$ given in \eqref{definition widehat omega}
   we obtain
   \begin{align*}
   \nu(k_1,k_2)(a):= \tilde{\nu}(k_1,k_2)(1,(a,1))= \widehat{F}(k_1,k_2)(a)
   \end{align*}
   which implies that $\nu= \widehat{F}$, and
   \begin{align*}
 \widehat{\omega}((k_1, \rho_1),  (k_2,\rho_2),(k_3 ,\rho_3)) :=& \tilde{\nu}(k_1,k_2)(1;(1,k_3)) \ \rho_1(F(k_2,k_3))\\
 =& \epsilon(k_1,k_2,k_3) \ \rho_1(F(k_2,k_3)).
   \end{align*}
   
   After applying Corollary \ref{corollary weak Morita equivalence} to the previous explicit construction of $\widehat{\omega}$ we obtain the following theorem:
   \begin{theorem} \label{theorem Morita equivalence}
   Let $K$ be a finite group acting on the finite abelian group $A$. Consider cocycles $F \in Z^2(K,A)$ and $\widehat{F} \in Z^2(K,{{\mathbb{A}}})$ such that $\widehat{F} \wedge F$ is trivial in cohomology, i.e. there exists $\epsilon \in C^3(K,\Cx)$ such that $\delta_K \epsilon = \widehat{F} \wedge F$. Define the 3-cocycles $\omega \in Z^3(A \rtimes_F K, \Cx)$ and $\widehat{\omega} \in Z^3(K \ltimes_{\widehat{F}} {{\mathbb{A}}}, \Cx)$ by the equations:
   \begin{align*}
   \omega((a_1,k_1),(a_2,k_2),(a_3,k_3)) := & \widehat{F}(k_1,k_2)(a_3) \ \epsilon(k_1,k_2,k_3)\\
    \widehat{\omega}((k_1, \rho_1),  (k_2,\rho_2),(k_3 ,\rho_3)) :=& \epsilon(k_1,k_2,k_3) \ \rho_1(F(k_2,k_3)).
   \end{align*}
   Then the tensor categories $Vect(A \rtimes_F K, \omega)$ and $Vect(K \ltimes_{\widehat{F}} {{\mathbb{A}}}, \widehat{\omega})$ are weakly Morita equivalent, and therefore their centers are braided equivalent
   $$\ZZ(Vect(A \rtimes_F K, \omega)) \simeq \ZZ(Vect(K \ltimes_{\widehat{F}} {{\mathbb{A}}}, \widehat{\omega})).$$
   \end{theorem}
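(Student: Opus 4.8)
The plan is to obtain this theorem by specializing the general constructions of the preceding sections to the explicit cochains produced in Theorem~\ref{theorem omega mu gamma} and then invoking Corollary~\ref{corollary weak Morita equivalence}. First I would put $G := A \rtimes_F K$ and take $\omega \in C^3(G,\Cx)$, $\mu \in C^{0,2}$ and $\gamma \in C^{1,1}$ exactly as in Theorem~\ref{theorem omega mu gamma}; these cochains can be written down precisely because the hypothesis that $\widehat F \wedge F$ is a coboundary supplies the $3$-cochain $\epsilon$ (and hence $\beta$) appearing in their definitions. From the identities proved there --- $\delta_G\mu\cdot\pi^*\omega = 1$, $\delta_K\mu\cdot\delta_G\gamma^{-1} = 1$ and $\delta_K\gamma = \varphi(\widehat F)\,\delta_G\beta$ --- I would first record that $\omega$ is a genuine $3$-cocycle (apply $\delta_G$ to the equation $\pi^*\omega = (\delta_G\mu)^{-1}$, use $(\delta_G)^2 = 1$ and the injectivity of $\pi^*$), that $\mu$ satisfies $\delta_G\mu^{-1} = \pi^*\omega$, so that $\MM := \MM(A\backslash G,\mu)$ is a right module category over $\VV(G,\omega)$, and that $\delta_G\gamma_y = (\mu\trl y)/\mu$ for every $y\in K$. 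Since $A$ is abelian and normal in $G$, Theorem~\ref{Theorem conditions for pointed} then guarantees that $\CC^*_\MM = \VV(G,\omega)^*_{\MM(A\backslash G,\mu)}$ is pointed.

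Next I would substitute this explicit $\gamma$ into $\tilde\nu = \delta_K\gamma$, so that $\tilde\nu = \varphi(\widehat F)\,\delta_G\beta$, and read off from the defining equations \eqref{definition nu} and \eqref{definition widehat omega} that $\nu = \psi\circ\tilde\nu = \widehat F$ in $Z^2(K,\mathbb{A})$ and that
$$\widehat\omega((k_1,\rho_1),(k_2,\rho_2),(k_3,\rho_3)) = \tilde\nu(k_1,k_2)(1;(1,k_3))\,\rho_1(F(k_2,k_3)) = \epsilon(k_1,k_2,k_3)\,\rho_1(F(k_2,k_3)).$$
In particular the crossed product $K\ltimes_\nu\mathbb{A}$ coincides with $K\ltimes_{\widehat F}\mathbb{A}$ and the resulting $\widehat\omega$ is exactly the cocycle in the statement; being the associativity constraint of the pointed skeleton of $\CC^*_\MM$, it automatically lies in $Z^3(K\ltimes_{\widehat F}\mathbb{A},\Cx)$. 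Applying Corollary~\ref{corollary weak Morita equivalence} to this data then yields that $Vect(A\rtimes_F K,\omega)$ and $Vect(K\ltimes_{\widehat F}\mathbb{A},\widehat\omega)$ are weakly Morita equivalent, whence by \S\ref{subsection Weak Morita equivalence of tensor categories} their centers are canonically braided equivalent.

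I expect no serious obstacle here: all the computational content is already contained in Theorem~\ref{theorem omega mu gamma}, whose proof verifies the three displayed identities by direct manipulation in the semidirect products using the cocycle conditions for $F$ and $\widehat F$ together with the relation $\delta_K\epsilon = \widehat F\wedge F$. The only things that still need checking for the statement itself are that the formulas for $\omega$ and $\widehat\omega$ in Theorem~\ref{theorem omega mu gamma} agree verbatim with those here after the harmless renaming $x_i\leadsto k_i$ (writing elements of $G$ as pairs), and that every cochain used above actually exists --- which is precisely the hypothesis that $[\widehat F\wedge F]$ vanishes in $H^4(K,\Cx)$, as discussed before Proposition~\ref{proposition omega in 2,1 3,0}.
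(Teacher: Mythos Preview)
Your proposal is correct and follows essentially the same route as the paper: take the explicit $\omega,\mu,\gamma$ from Theorem~\ref{theorem omega mu gamma}, compute $\tilde\nu=\delta_K\gamma=\varphi(\widehat F)\,\delta_G\beta$ and hence $\nu=\widehat F$ and $\widehat\omega$ from \eqref{definition nu} and \eqref{definition widehat omega}, and then invoke Corollary~\ref{corollary weak Morita equivalence}. The only difference is that you spell out a few sanity checks (that $\omega$ is closed, that $\CC_\MM^*$ is pointed via Theorem~\ref{Theorem conditions for pointed}) which the paper leaves implicit.
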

   
   Note that we may have taken a different choice of $\mu$ and $\gamma$ in section \ref{subsection omega nu} thus producing different $\tilde{\nu}$ and $\widehat{\omega}$. The description of
   $\widehat{\omega}$ depends on the choice of cohomology class $[\widehat{F}] \in H^2(K,{{\mathbb{A}}})\cong E_2^{2,1}$ in the second page representing the image of
   $[\omega] $ in $E_3^{2,1}=E_\infty^{2,1}$. This choice may be changed by  
   elements in the image of the second differential $d_2: E_2^{0,2} \to E_2^{2,1}$.
   
   Changing $\omega$ by a coboundary $\omega'=\omega\delta_G \alpha$, and writing $\omega'$ explicitly as
   \begin{align}\label{def omega'}\omega'((a_1,x_1),(a_2,x_2),(a_3,x_3)) :=  \widehat{F}'(x_1,x_2)(a_3) \ \epsilon'(x_1,x_2,x_3),\end{align}
  produces a  $\widehat{\omega}'$ which becomes
  \begin{align}\label{def widehat omega'}\widehat{\omega}'((k_1, \rho_1),  (k_2,\rho_2),(k_3 ,\rho_3)) := \epsilon'(k_1,k_2,k_3) \ \rho_1(F(k_2,k_3)).\end{align}
   Applying Theorem \ref{theorem Morita equivalence} and using the equivalence of categories $Vect(A \rtimes_F K, \omega) \simeq Vect(A \rtimes_F K, \omega')$ we obtain that the tensor categories $Vect(A \rtimes_F K, \omega)$ and $Vect(K \ltimes_{\widehat{F}'} {{\mathbb{A}}}, \widehat{\omega}')$ are also weakly Morita equivalent.
   The previous argument permit us to conclude the following corollary:
   \begin{cor} \label{corollary omega'}
   Suppose that the fusion category $\CC_\MM^*=\VV(A \rtimes_F K, \omega)_{\MM(K, \mu)}^*$ is pointed. Then it is
   equivalent to the category 
   $Vect(K \ltimes_{\widehat{F}'} {{\mathbb{A}}}, \widehat{\omega}')$ where $\widehat{\omega}'$ and $\omega'$ are the cocycles defined in \eqref{def omega'} and \eqref{def widehat omega'} respectively
   and $\omega'$ is cohomologous to $\omega$.
   \end{cor}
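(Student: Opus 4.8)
The plan is to bring $\omega$ into the cohomological normal form of Proposition~\ref{proposition omega in 2,1 3,0}, read the resulting explicit model of the dual off Theorem~\ref{theorem omega mu gamma} and the constructions of \S\ref{Subsection The Grothendieck ring of the pointed category}, and then transport the module category $\MM(K,\mu)$ along an equivalence $Vect(A\rtimes_F K,\omega)\simeq Vect(A\rtimes_F K,\omega')$ so that it lands on that model. First, because $\CC_\MM^*$ is pointed, Theorem~\ref{Theorem conditions for pointed} gives that $A$ is abelian and normal in $G=A\rtimes_F K$, so the double complex of \S\ref{subsubsection Double complex} is available, and Corollary~\ref{omega in 2,1 and 3,0} together with the Remark following it places $[\omega]$ in $\Omega(G;A)$. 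Proposition~\ref{proposition omega in 2,1 3,0} then provides $\widehat{F}'\in Z^2(K,\mathbb{A})$ and $\epsilon'\in C^3(K,\Cx)$ with $\delta_K\epsilon'=\widehat{F}'\wedge F$ such that the cocycle $\varphi(\widehat{F}')\delta_G\beta$, where $\beta(k_1,k_2)(x):=\epsilon'(k_1,k_2,x)$, represents $[\pi^*\omega]$ in the total complex. I would exploit the freedom in this choice: as recalled in the paragraph before the statement, the class of $\widehat{F}'$ is pinned down by $[\omega]$ only modulo $\operatorname{im}(d_2\colon E_2^{0,2}\to E_2^{2,1})$, and this is precisely the ambiguity carried by $\mu$, so I would pick $\widehat{F}'$ to represent the class of $\nu=\psi\circ\tilde\nu$ attached to the given data $(\omega,\mu,\gamma)$ by Lemma~\ref{lemma omega cohomologous to tilde nu}.

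Feeding $(\widehat{F}',\epsilon')$ into Theorem~\ref{theorem omega mu gamma} produces the cocycle $\omega'$ of \eqref{def omega'} together with $\mu'\in C^{0,2}$ and $\gamma'\in C^{1,1}$ satisfying $\pi^*\omega'\cdot\delta_{Tot}(\mu'\oplus\gamma')=\varphi(\widehat{F}')\delta_G\beta$. Thus $\pi^*\omega$ and $\pi^*\omega'$ both represent $[\varphi(\widehat{F}')\delta_G\beta]$ in the cohomology of the total complex, and since the inclusion of $K$-invariant cochains $\pi^*\colon C^*(G,\Cx)\hookrightarrow\Tot\bigl(C^*(K,C^*(G,\Map(K,\Cx)))\bigr)$ is a quasi-isomorphism (the Proposition of \S\ref{subsubsection Double complex}), this yields $[\omega']=[\omega]$ in $H^3(G,\Cx)$, which is the asserted cohomologousness. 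Running the recipe of \S\ref{Subsection The Grothendieck ring of the pointed category} and the computation leading to Theorem~\ref{theorem Morita equivalence} on the explicit triple $(\omega',\mu',\gamma')$ gives $\tilde\nu=\varphi(\widehat{F}')\delta_G\beta$, hence $\nu=\widehat{F}'$ and the associativity constraint $\widehat\omega'$ of \eqref{def widehat omega'}, so that $Vect(A\rtimes_F K,\omega')^*_{\MM(K,\mu')}\simeq Vect(K\ltimes_{\widehat{F}'}\mathbb{A},\widehat\omega')$.

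It remains to connect $\CC_\MM^*$ itself with this model. Fix the tensor equivalence $E\colon Vect(A\rtimes_F K,\omega)\xrightarrow{\ \sim\ }Vect(A\rtimes_F K,\omega')$ given by the identity on objects twisted by a cochain $\alpha\in C^2(G,\Cx)$ with $\omega'=\omega\,\delta_G\alpha$. Then $E$ carries $\MM(K,\mu)$ to a module category over $Vect(A\rtimes_F K,\omega')$ with the same underlying $G$-set $K$, and, the dual category being a tensor invariant, $\CC_\MM^*\simeq Vect(A\rtimes_F K,\omega')^*_{E_*\MM(K,\mu)}$. By the classification in \S\ref{subsection indecomposable module categories}, $E_*\MM(K,\mu)$ is equivalent to $\MM(K,\mu')$ exactly when a certain class in $H^2(A,\Cx)/N_G(A)$ assembled from $\psi(\mu)$ and $\alpha|_A$ vanishes; since $[\omega]\in\Omega(G;A)$ has trivial restriction to $H^3(A,\Cx)$, $\operatorname{Res}^G_A\omega$ is a coboundary and one may adjust $\alpha$ within its $Z^2(G,\Cx)$ gauge freedom so as to absorb $\psi(\mu)$ and make that class trivial. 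With such a choice, $\CC_\MM^*\simeq Vect(A\rtimes_F K,\omega')^*_{\MM(K,\mu')}\simeq Vect(K\ltimes_{\widehat{F}'}\mathbb{A},\widehat\omega')$, which is the assertion.

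I expect the last paragraph to be the main obstacle: the dual of $Vect(G,\omega')$ genuinely depends on the module structure and not only on $[\omega']$, so $E_*\MM(K,\mu)$ must be matched with $\MM(K,\mu')$ on the nose, which forces one to coordinate the $Z^2(G,\Cx)$ freedom in $\alpha$ with the $\operatorname{im}(d_2)$ freedom used to select $\widehat{F}'$ in the first step. Everything else is a reassembly of results already proved, the cohomologousness $[\omega']=[\omega]$ being an immediate consequence of the quasi-isomorphism property of $\pi^*$.
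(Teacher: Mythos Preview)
Your proposal follows the same route as the paper: normalise $\omega$ via Proposition~\ref{proposition omega in 2,1 3,0} and Theorem~\ref{theorem omega mu gamma} to the explicit form $\omega'$, then pass through the equivalence $Vect(G,\omega)\simeq Vect(G,\omega')$ and invoke Theorem~\ref{theorem Morita equivalence}. The paper's argument is exactly the paragraph preceding the corollary, which stops at ``$Vect(A\rtimes_F K,\omega)$ and $Vect(K\ltimes_{\widehat{F}'}\mathbb{A},\widehat\omega')$ are weakly Morita equivalent'' and declares the corollary a consequence; it does not spell out the transport of the specific module category $\MM(K,\mu)$ along the equivalence nor its identification with $\MM(K,\mu')$.

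So the extra work you do in your last paragraph---matching $E_*\MM(K,\mu)$ with $\MM(K,\mu')$ by coordinating the $Z^2(G,\Cx)$ gauge freedom in $\alpha$ with the $\operatorname{im}(d_2\colon E_2^{0,2}\to E_2^{2,1})$ freedom in the choice of $\widehat{F}'$---is genuinely additional rigor beyond what the paper supplies. Your instinct that this is where the content lies is correct: the paper effectively asserts it, relying on the observation that the ambiguity in $\nu$ coming from the choice of $\mu$ is precisely the image of $d_2$, so that some choice of $\widehat{F}'$ and $\epsilon'$ will recover the given dual. Your argument makes this explicit, at the cost of having to track the module structure through the tensor equivalence; the paper's version is shorter but leaves that bookkeeping to the reader.
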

   
   \subsection{Classification theorem} Now we are ready to state the key result in order to establish the 
 weak Morita equivalence classes of group theoretical tensor categories.

 \begin{theorem} \label{main theorem}
 Let $H$ and $\widehat{H}$ be finite groups, $\eta \in Z^3(H, \Cx)$ and $\widehat{\eta} \in Z^3(\widehat{H}, \Cx)$. Then the tensor categories $Vect(H,\eta)$ and $Vect(\widehat{H}, \widehat{\eta})$ are weakly Morita equivalent if and only if the following conditions are satified:
 \begin{itemize}
 \item There exist isomorphisms of groups 
 $$\phi : G= A \rtimes_F K \stackrel{\cong}{\to} H \ \ \ \ 
 \widehat{\phi} : \widehat{G}= K \ltimes_{\widehat{F}} {{\mathbb{A}}} \stackrel{\cong}{\to} \widehat{H}$$
 for some finite group $K$ acting on the abelian group $A$,
 with $F \in Z^2(K, A)$ and $\widehat{F} \in Z^2(K, {{\mathbb{A}}})$ where ${{\mathbb{A}}} := \Hom(A, \Cx)$. 
 \item There exist $\epsilon : K^3 \to \Cx$ such that $\widehat{F} \wedge F =\delta_K \epsilon$.
 \item The cohomology classes satisfy the equations $[ \phi^* \eta ]=[\omega]$ and 
 $[\widehat{\phi}^*\widehat{\eta}]=[\widehat{\omega}]$ with 
 \begin{align*}
   \omega((a_1,k_1),(a_2,k_2),(a_3,k_3)) := & \widehat{F}(k_1,k_2)(a_3) \ \epsilon(k_1,k_2,k_3)\\
    \widehat{\omega}((k_1, \rho_1),  (k_2,\rho_2),(k_3 ,\rho_3)) :=& \epsilon(k_1,k_2,k_3) \ \rho_1(F(k_2,k_3)).
   \end{align*}
 
 \end{itemize}
 \end{theorem}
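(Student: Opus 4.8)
The plan is to deduce the theorem by assembling the results of the previous sections, so that each implication becomes essentially bookkeeping; the genuine content already resides in Theorem~\ref{Theorem conditions for pointed}, Theorem~\ref{theorem omega mu gamma}, Theorem~\ref{theorem Morita equivalence} and Corollary~\ref{corollary omega'}.

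For the ``if'' direction I would argue as follows. Given $K$ acting on $A$, together with $F\in Z^2(K,A)$, $\widehat{F}\in Z^2(K,{{\mathbb{A}}})$ and $\epsilon$ with $\delta_K\epsilon=\widehat{F}\wedge F$, Theorem~\ref{theorem Morita equivalence} directly gives that $Vect(A\rtimes_F K,\omega)$ and $Vect(K\ltimes_{\widehat{F}}{{\mathbb{A}}},\widehat{\omega})$ are weakly Morita equivalent, equivalently that their centers are braided equivalent. Since $\phi$ is a group isomorphism and $[\phi^*\eta]=[\omega]$, the tensor functor induced by $\phi$, twisted by a $2$-cochain trivializing $\phi^*\eta/\omega$, is a tensor equivalence $Vect(H,\eta)\simeq Vect(A\rtimes_F K,\omega)$; symmetrically $Vect(\widehat{H},\widehat{\eta})\simeq Vect(K\ltimes_{\widehat{F}}{{\mathbb{A}}},\widehat{\omega})$. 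As tensor-equivalent categories have braided-equivalent centers, chaining the three braided equivalences yields $\ZZ(Vect(H,\eta))\simeq\ZZ(Vect(\widehat{H},\widehat{\eta}))$, and the conclusion follows from \cite[Thm.~3.1]{W-GT-fusion-cat}; alternatively one invokes transitivity of weak Morita equivalence \cite[Prop.~4.6]{MugerI} and the fact that tensor-equivalent categories are weakly Morita equivalent.

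For the ``only if'' direction I would begin from a witness: an indecomposable right module category $\MM$ over $\VV(H,\eta)\simeq Vect(H,\eta)$ with $\VV(H,\eta)^*_{\MM}\simeq Vect(\widehat{H},\widehat{\eta})$. By \S\ref{subsection indecomposable module categories} write $\MM=\MM(A\backslash H,\mu)$ with $A\le H$ and $\delta_H\mu^{-1}=\pi^*\eta$. Since $\VV(H,\eta)^*_{\MM}$ is pointed, Theorem~\ref{Theorem conditions for pointed} forces $A$ to be abelian and normal in $H$; set $K:=A\backslash H$, so that $1\to A\to H\to K\to 1$ is an abelian extension and, by \S\ref{subsection Abelian group extensions}, $H\cong A\rtimes_F K$ for some $F\in Z^2(K,A)$, via an isomorphism $\phi$. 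Transporting $\eta$ and $\MM$ along $\phi$ identifies $\VV(H,\eta)^*_{\MM}$ with $\VV(A\rtimes_F K,\phi^*\eta)^*_{\MM(K,\phi^*\mu)}$, to which Corollary~\ref{corollary omega'} applies with $\phi^*\eta$ in the role of $\omega$: it produces a cocycle $\widehat{F}\in Z^2(K,{{\mathbb{A}}})$, a cochain $\epsilon\in C^3(K,\Cx)$ with $\delta_K\epsilon=\widehat{F}\wedge F$, a cohomology $\phi^*\eta\sim\omega$ with $\omega$ the stated normal form, and a tensor equivalence with $Vect(K\ltimes_{\widehat{F}}{{\mathbb{A}}},\widehat{\omega})$, $\widehat{\omega}$ the stated normal form. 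Hence $Vect(\widehat{H},\widehat{\eta})\simeq Vect(K\ltimes_{\widehat{F}}{{\mathbb{A}}},\widehat{\omega})$; and since a tensor equivalence of pointed fusion categories induces an isomorphism of based Grothendieck rings, hence a group isomorphism matching the two associator classes, I would extract $\widehat{\phi}:K\ltimes_{\widehat{F}}{{\mathbb{A}}}\stackrel{\cong}{\to}\widehat{H}$ with $[\widehat{\phi}^*\widehat{\eta}]=[\widehat{\omega}]$. Together with $[\phi^*\eta]=[\omega]$ this verifies the three conditions.

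The main obstacle is not in this assembly but in what it rests on: Corollary~\ref{corollary omega'}, and behind it the identification, for a pointed dual category, of the surviving class $[\widehat{F}]\in E^{2,1}_\infty$, of the trivialization $\epsilon$ of $[\widehat{F}\wedge F]=d_2[\widehat{F}]^{-1}$, and of the normal form $\omega\sim\varphi(\widehat{F})\delta_G\beta\in C^{2,1}$ from Theorem~\ref{theorem omega mu gamma}. I would also record that the two ``$[F]$ and $[\widehat{F}]$ survive the Lyndon--Hochschild--Serre spectral sequence'' conditions stated informally in the introduction are each equivalent to the existence of $\epsilon$ --- this is precisely the computation $d_2[\widehat{F}]=[(\widehat{F}\wedge F)^{-1}]$ and its mirror for $[F]$ over $K\ltimes_{\widehat{F}}{{\mathbb{A}}}$ --- so no hidden hypothesis is needed. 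Finally, the symmetry of the three conditions under the interchange $(A,F)\leftrightarrow({{\mathbb{A}}},\widehat{F})$ mirrors the symmetry of the weak Morita equivalence relation and serves as a consistency check on the statement.
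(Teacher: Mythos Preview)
Your proposal is correct and follows essentially the same route as the paper's proof: the ``if'' direction is Theorem~\ref{theorem Morita equivalence} transported along $\phi$ and $\widehat{\phi}$, and the ``only if'' direction starts from the module-category witness, invokes Theorem~\ref{Theorem conditions for pointed} to get $A$ abelian and normal, and then applies Corollary~\ref{corollary omega'} to produce $\widehat{F}$, $\epsilon$, and the normal forms of $\omega$ and $\widehat{\omega}$. Your write-up is simply more explicit than the paper's about the bookkeeping (transport of structure along $\phi$, extracting $\widehat{\phi}$ from the tensor equivalence of pointed categories), and your additional remarks on $d_2$ and symmetry are accurate but not needed for the argument.
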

 \begin{proof}
 Suppose that $Vect(H,\eta)$ and $Vect(\widehat{H}, \widehat{\eta})$ are weaky Morita equivalent. Then
 $Vect(\widehat{H}, \widehat{\eta})$ is equivalent 
 to the dual category $\VV(H,\eta)_{\MM(A \backslash H, \mu)}^*$ with
  $K:= A \backslash H$, $\phi : G= A \rtimes_F K \stackrel{\cong}{\to} H$ and  $\MM(A \backslash H, \mu)$
   some module category of $\VV(H,\eta)$. By Corollary \ref{corollary omega'} the tensor category 
   $Vect(\widehat{H}, \widehat{\eta})$  is furthermore equivalent
 to $Vect(K \ltimes_{\widehat{F}'} {{\mathbb{A}}}, \widehat{\omega}')$ where $\omega'$ and
  $\widehat{\omega}'$ are the cocycles defined in equations \eqref{def omega'} and \eqref{def widehat omega'} respectively, and such that $\omega'$ is cohomologous to $\phi^*\eta$.  In particular we have that
$ \widehat{\phi} : \widehat{G}= K \ltimes_{\widehat{F}} {{\mathbb{A}}} \stackrel{\cong}{\to} \widehat{H}$ and that
 $\widehat{\phi}^*\widehat{\eta}$ is cohomologous to $\widehat{\omega}'$.

 The converse is the statement of Theorem \ref{theorem Morita equivalence}.
    \end{proof}
    In the case that both $\omega$ and $\widehat{\omega}$ were cohomologically trivial, we conclude that
    $Vect(A \rtimes_F K, 1)$ and $Vect(K \ltimes_{\widehat{F}} {{\mathbb{A}}}, 1)$
    are weakly Morita equivalent if and only if the cohomology class $[\widehat{F}] \in H^2(K,{{\mathbb{A}}})$ lies in the image
    of the second differential of the spectral sequence
    $d_2: H^2(A, \Cx)^K \to H^2(K,{{\mathbb{A}}}).$
    This result was originally proved in \cite[Cor. 6.2]{Davydov}.
    
    \section{Examples} \label{section examples}
    \subsection{Pointed fusion categories of global dimension 4}
    We can now calculate the weakly Morita equivalence classes of pointed fusion categories of global dimension 4. 
    
    For $G = \integer/4$ we  have that
    $H^*(\integer/4,\integer) \cong \integer[u]/4u$ with $|u|=2$ and that the non trivial automorphism of $\integer/4$ maps $u$ to $-u$; therefore
    $H^4(\integer/4,\integer)/Aut(\integer/4) =\langle u^2 \rangle = \integer/4$.
    
     For $G= (\integer/2)^2$ we have that
    $$H^4((\integer/2)^2, \integer) \cong ker(Sq^1: H^4((\integer/2)^2, \IF_2) \to H^5((\integer/2)^2, \IF_2)) =\langle x^4,x^2y^2,y^4 \rangle$$
    where $H^*((\integer/2)^2, \IF_2)=\IF_2[x,y]$ and $Sq^1$ is the Steenrod operation, and up to automorphisms of $(\integer/2)^2$ we get
    $$
    H^4((\integer/2)^2, \integer)/Aut((\integer/2)^2) = \left\{ 
    \begin{array}{l}
    0\\
    (x^4)= \{x^4,y^4,x^4+y^4 \} \\
    (x^2y^2) =\{ x^2y^2,x^2y^2+x^4,x^2y^2+y^4 \}\\
    (x^4+x^2y^2+y^4) = \{x^4+x^2y^2+y^4\}.
    \end{array}
    \right.
    $$
    
    Since we have a clear description for a base of $H^4((\integer/2)^2, \integer)$,
    we will abuse the notation and denote with the symbols of 
    $H^4((\integer/2)^2, \integer)$ the elements of $H^3((\integer/2)^2, \Cx)$. With this 
    clarification
    the relevant terms of the second page of the LHS spectral sequence of the extension $1 \to \integer/2 \to \integer/4 \to \integer/2 \to 1$
    become
    
    \begin{tikzpicture}
\matrix [matrix of math nodes,row sep=6mm]
{
 3 &  [5mm]  |(a)|  \integer/2= \langle y^4 \rangle & [5mm]   & [5mm]  & [5mm] & [5mm] & [5mm] \\\
2 & |(b)| 0 & |(c)|  0  &  & & & \\
1&  \integer/2 & |(d)|  \integer/2= \langle yx\rangle & |(e)| \integer/2= \langle yx^2\rangle &  & & \\
0& \Cx &  \integer/2 & |(f)| 0 & |(g)|  \integer/2=\langle x^4\rangle & 0 \\
& 0 & 1 & 2 & 3& 4&\\
};

\tikzstyle{every node}=[midway,auto,font=\scriptsize]
\draw[thick] (-5,-1.7) -- (-5,2.8) ;
\draw[thick] (-5,-1.7) -- (6.0,-1.7) ;
\draw[-stealth] (d) -- node {$\cong$} (g);
\end{tikzpicture}

\noindent where the second differential is defined by the assignment $d_2(yx^k)= Sq^1(x^{k+2})$ with the class $x^2$ classifying the extension.
We conclude that the only weak Morita equivalence that appears, which does not come from an automorphism of a group, is
$$Vect(\integer/4, 0) \simeq Vect((\integer/2)^2,x^2y^2).$$
    
    Therefore we see that there are exactly seven weak Morita equivalence classes of pointed fusion categories of global dimension 4, namely
    the three for $\integer/4$: $$ Vect(\integer/4,u^2),
    Vect(\integer/4,2u^2),
    Vect(\integer/4,3u^2);$$ the three for $(\integer/2)^2$: $$Vect((\integer/2)^2,0),Vect((\integer/2)^2,x^4),Vect((\integer/2)^2, x^4+y^4+x^2y^2)$$
    and the one that we have just constructed
   $$ Vect(\integer/4,0)\simeq_M Vect((\integer/2)^2,x^2y^2).    $$
        
    \subsection{Non trivial action of $\integer/2$ on $\integer/4$} \label{subsection nontrivial action}
    Consider the non trivial action of $\integer/2$ on $\integer/4$ and the abelian extension $1 \to \integer/4 \to G \to \integer/2 \to 1$. The group
    $G$ is either the Dihedral group $D_8$ in the case that the extension is a split extension or the quaternion group $Q_8$ in the case that the extension is a non-split extension.
    
    In the case of $D_8$ the relevant elements of the second page of the LHS spectral sequence
    associated to the extension are:
    
    \begin{tikzpicture}
\matrix [matrix of math nodes,row sep=6mm]
{
 3 &  [5mm]  |(a)|  \integer/4= \langle a\rangle & [5mm]   & [5mm]  & [5mm] & [5mm] & [5mm] \\\
2 & |(b)| 0 & |(c)|  0  &  & & & \\
1&  \integer/2 & |(d)|  \integer/2= \langle e\rangle & |(e)| \integer/2= \langle b\rangle &  & & \\
0& \Cx &  \integer/2 & |(f)| 0 & |(g)|  \integer/2=\langle c\rangle & 0 \\
& 0 & 1 & 2 & 3& 4&\\
};
\tikzstyle{every node}=[midway,auto,font=\scriptsize]
\draw[thick] (-4.5,-1.7) -- (-4.5,2.8) ;
\draw[thick] (-4.5,-1.7) -- (5.0,-1.7) ;
\end{tikzpicture}

 \noindent   and they all survive to the page at infinity.  Since $H^3(D_8, \Cx)= \integer/4 \oplus \integer/2 \oplus \integer/2$ we may say that
    $H^3(D_8, \Cx)\cong \langle a \rangle \oplus \langle b \rangle  \oplus \langle c \rangle$, and since $D_8 \cong \integer/4 \rtimes \integer/2$
   we have that $F=0$. The element $b \in H^2(\integer/2, {\integer/4})$ defines the non trivial extension $Q_8 \cong \integer/2 \ltimes_b {\integer/4}$.
   
   The second page of the LHS spectral sequence of the extension
   $Q_8 \cong \integer/2 \ltimes_b {\integer/4}$ becomes:
   
    \begin{tikzpicture}
\matrix [matrix of math nodes,row sep=6mm]
{
 3 &  [5mm]  |(a)|  \integer/4= \langle \alpha \rangle & [5mm]   & [5mm]  & [5mm] & [5mm] & [5mm] \\\
2 & |(b)| 0 & |(c)|  0  &  & & & \\
1&  \integer/2 & |(d)|  \integer/2= \langle e\rangle & |(e)| \integer/2= \langle 4\alpha\rangle &  & & \\
0& \Cx &  \integer/2 & |(f)| 0 & |(g)|  \integer/2=\langle c\rangle & 0 \\
& 0 & 1 & 2 & 3& 4&\\
};
\tikzstyle{every node}=[midway,auto,font=\scriptsize]
\draw[thick] (-4.5,-1.7) -- (-4.5,2.8) ;
\draw[thick] (-4.5,-1.7) -- (5.0,-1.7) ;
\draw[-stealth] (d) -- node {$\cong$} (g);
\end{tikzpicture}

\noindent   where $d_2: E_2^{1,1} \stackrel{\cong}{\to} E_2^{3,0}$ is an isomorphism and $H^3(Q_8,\Cx)=\langle \alpha \rangle= \integer/8$.
   
   Therefore for these extensions we only have the weak Morita equivalences:
   \begin{align*}
   Vect(D_8, b) \simeq_M Vect(Q_8,0) \simeq_M  Vect(D_8,b \oplus c) \end{align*} 
   where the equivalence of the right is obtained from the fact that $c$ does not survive the spectral sequence for the group $Q_8$, and the self Morita equivalence
$$   Vect(Q_8,4\alpha) \simeq_M Vect(Q_8,4\alpha).$$
   \subsection{Extension of $\integer/2 \times \integer/2$ by $\integer/2$}
   Consider the non-abelian extensions of the form $1 \to \integer/2 \to G \to \integer/2 \times \integer/2 \to 1$, namely $D_8$ and $Q_8$.
   
   The second page of the LHS spectral sequence for these extensions becomes:
   
    \begin{tikzpicture}
\matrix [matrix of math nodes,row sep=6mm]
{
 3 &  [5mm]  |(a)|  \integer/2 & [5mm]   & [5mm]  & [5mm] & [5mm] & [5mm] \\\
2 & |(b)| 0 & |(c)|  0  &  & & & \\
1&  \integer/2 & |(d)|  (\integer/2)^2 & |(e)| (\integer/2)^3 &  & & \\
0& \Cx &  (\integer/2)^2 & |(f)| \integer/2 & |(g)|  (\integer/2)^3 & (\integer/2)^2 \\
& 0 & 1 & 2 & 3& 4&\\
};
\tikzstyle{every node}=[midway,auto,font=\scriptsize]
\draw[thick] (-4.0,-1.7) -- (-4.0,2.8) ;
\draw[thick] (-4.0,-1.7) -- (4.0,-1.7) ;
\end{tikzpicture}
   
  \noindent and we need only to concentrate in the differentials $d_2:E_2^{p,1} \to E_2^{p+2,0}$ between the first two rows since we know that $E_2^{0,3} = \integer/2$ survives the spectral sequence in all the groups.
   
   First we will determine the differential $\overline{d}_2^G$ in the LHS spectral sequence for coefficients in the field of two elements $\IF_2$. In this
   case $$E_2 \cong H^*(\integer/2 \times \integer/2, \IF_2) \otimes_{\IF_2} H^*(\integer/2, \IF_2) \cong \IF_2[x,y,e]$$
   and $\overline{d}_2^Ge \in H^2(\integer/2 \times \integer/2, \IF_2)$ represents
   the class that defines the extension $G$. It is known  
   that the class $x^2+xy +y^2$ defines $Q_8$ \cite[Lemma 2.10]{AdemMilgram}, the classes $x^2+xy, xy+y^2,xy$ define $D_8$ \cite[pp. 130]{AdemMilgram}, and the classes $x^2,y^2,x^2+y^2$ define
   $\integer/2 \times \integer/4$.
   
   Second we use the fact that for the group $(\integer/2)^2$  we have the isomorphism
   $$H^j((\integer/2)^2, \integer) \cong ker(Sq^1 : H^j((\integer/2)^2, \integer/2) \to H^{j+1}((\integer/2)^2, \integer/2))  $$
   where $Sq^1$ is the first Steenrod square.  This implies that the map
   canonical map
   $$H^j((\integer/2)^2, \integer/2)) \to H^{j}((\integer/2)^2, \Cx)$$
   can be seen as the map
   \begin{align*}
   H^j((\integer/2)^2, \integer/2)) \stackrel{Sq^1}{\longrightarrow} &
   ker\left(Sq^1 : H^{j+1}((\integer/2)^2, \integer/2) \to H^{j+2}((\integer/2)^2, \integer/2)\right)\\
  & \cong H^{j+1}((\integer/2)^2, \integer) \cong H^{j}((\integer/2)^2, \complex^*).
   \end{align*} 
   
   Therefore the second differential
   $$d_2^G : H^{p-2}((\integer/2)^2, \integer/2) \to H^{p}((\integer/2)^2, \Cx)$$
   is isomorphic to the composite map
   \begin{align*}
   d_2^G : H^{p-2}((\integer/2)^2, \integer/2)  \to  & ker\left(Sq^1 : H^{p+1}((\integer/2)^2, \integer/2) \to H^{p+2}((\integer/2)^2, \integer/2)\right)\\
   & \cong H^{p+1}((\integer/2)^2, \integer) \cong H^{p}((\integer/2)^2, \complex^*)\\
   z  \mapsto & Sq^1(z \cup \overline{d}_2^Ge).
   \end{align*}
   
   Without loss of generality we may choose $\overline{d}_2^Ge=xy+x^2$ for calculating the LHS spectral sequence for $D_8$. Applying the differential $d_2^G$ to the elements $1,x,y,x^2,xy,y^2$ we obtain that the surviving terms in the infinite page of the LHS spectral sequence for $D_8$ become:
   
    \begin{tikzpicture}
\matrix [matrix of math nodes,row sep=6mm]
{
 3 &  [1mm]  |(a)|  \integer/2 & [1mm]   & [1mm]  & [1mm] & [1mm] & [1mm] \\\
2 & |(b)| 0 & |(c)|  0  &  & & & \\
1&  0 & |(d)|  \integer/2=\langle e(y) \rangle  & |(e)| \integer/2=\langle e(xy+x^2) \rangle  &  & & \\
0& \Cx &  (\integer/2)^2=\langle x^2,y^2 \rangle & |(f)| 0  & |(g)|  (\integer/2)^2=\frac{\langle x^4,x^2y^2,y^4 \rangle}{\langle x^2y^2+x^4 \rangle} & 0 \\
& 0 & 1 & 2 & 3& 4&\\
};

\tikzstyle{every node}=[midway,auto,font=\scriptsize]
\draw[thick] (-5.3,-1.7) -- (-5.3,2.8) ;
\draw[thick] (-5.3,-1.7) -- (5.9,-1.7) ;
\end{tikzpicture}
   
\noindent  Here we are abusing the notation and we are using the explicit
base of $H^4((\integer/2)^2, \integer)$ to denote the elements in 
$H^3((\integer/2)^2, \Cx)$.
Since $E_3^{2,1}= \langle e(xy+x^2) \rangle$ we have that the weak Morita
   equivalences that we obtain in the extension are 
   $$Vect(D_8,0) \simeq_M Vect((\integer/2)^3, Sq^1(e(xy+x^2)))$$
    $$Vect(D_8,x^4) \simeq_M Vect((\integer/2)^3, Sq^1(e(xy+x^2))+x^4)$$
     $$Vect(D_8,y^4) \simeq_M Vect((\integer/2)^3, Sq^1(e(xy+x^2))+y^4)$$
   and the self equivalence
   $$Vect(D_8,e(xy+x^2) \simeq Vect(D_8,e(xy+x^2).$$

   The surviving terms for $Q_8$ with $\overline{d}_2^Ge=x^2+xy+y^2$ are:
   
   \begin{tikzpicture}
\matrix [matrix of math nodes,row sep=6mm]
{
 3 &  [3mm]  |(a)|  \integer/2 & [3mm]   & [3mm]  & [3mm] & [3mm] & [3mm] \\\
2 & |(b)| 0 & |(c)|  0  &  & & & \\
1&  0 & |(d)|  0  & |(e)| \integer/2=\langle e(x^2 +xy+ y^2) \rangle &  & & \\
0& \Cx &  (\integer/2)^2=\langle x^2,y^2 \rangle & |(f)| 0  & |(g)|  \integer/2=\langle x^2y^2\rangle & 0 \\
& 0 & 1 & 2 & 3& 4&\\
};
\tikzstyle{every node}=[midway,auto,font=\scriptsize]
\draw[thick] (-5.6,-1.7) -- (-5.6,2.8) ;
\draw[thick] (-5.6,-1.7) -- (6.1,-1.7) ;
\end{tikzpicture}
   
   \noindent with $E_\infty^{0,3} = \integer/2=\langle \alpha \rangle$,
   $\langle x^2 +xy+ y^2 \rangle= \langle 2\alpha \rangle$ and 
   $\langle x^2y^2 \rangle= \langle 4\alpha \rangle$ where 
   $\alpha$ is a generator $\langle \alpha \rangle = H^3(Q_8,\Cx)$ that was defined in section \S\ref{subsection nontrivial action}.
   
 Hence the only Morita equivalences that we obtain are 
 $$Vect(Q_8,0) \simeq Vect((\integer/2)^3, Sq^1(e(x^2 +xy+ y^2)))$$
 $$Vect(Q_8,4\alpha) \simeq Vect((\integer/2)^3, Sq^1(e(x^2 +xy+ y^2))+x^2y^2)$$
 and the self Morita equivalences
   $Vect(Q_8,2\alpha) \simeq_M Vect(Q_8,2\alpha) $ and $Vect(Q_8,6\alpha) \simeq_M Vect(Q_8,6\alpha) $.
   
   Bundling up the previous results for the group $Q_8$ we obtain the following result:
   
   \begin{proposition}
Let us suppose that $Vect(Q_8, k\alpha)$ is weakly Morita equivalent to $Vect(G,\eta)$. Then 
\begin{itemize}
\item For $k$ odd or  $k=2,6$, $G$ must be isomorphic to $Q_8$ and $\eta$ must correspond to $j\alpha$ with $j$ odd or $j=2,6$.
\item For $k=4$, $G$ must be isomorphic to $Q_8$ or $(\integer/2)^3$.
\item For $k=0$, $G$ must be isomorphic to $Q_8$, $D_8$ or $(\integer/2)^3$.
\end{itemize}
   \end{proposition}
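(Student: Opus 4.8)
The plan is to apply Theorem~\ref{main theorem}, in the packaged form of Corollary~\ref{corollary omega'}, to enumerate exhaustively the weak Morita partners of $Vect(Q_8,k\alpha)$. If $Vect(G,\eta)\simeq_M Vect(Q_8,k\alpha)$ then by definition there is an indecomposable right module category $\MM$ over $Vect(Q_8,k\alpha)$ with $\Fun_{Vect(Q_8,k\alpha)}(\MM,\MM)\simeq Vect(G,\eta)$; since $Vect(G,\eta)$ is pointed, Theorem~\ref{Theorem conditions for pointed} forces $\MM=\MM(A\backslash Q_8,\mu)$ for some abelian subgroup $A$ normal in $Q_8$, and Corollary~\ref{omega in 2,1 and 3,0} forces $[k\alpha]\in\Omega(Q_8;A)$. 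So the first step is to list, up to $\Aut(Q_8)$, the abelian normal subgroups of $Q_8$: the trivial group, the centre $Z\cong\integer/2$, and one copy of $\integer/4$ (the three cyclic subgroups of order four being permuted by $\Aut(Q_8)$, hence carrying the same data). I also record that $H^2(A,\Cx)=0$ for each of these $A$; this gives $E_2^{0,2}=0$, so there is no freedom in the cocycle $\widehat F$ coming from $d_2\colon E_2^{0,2}\to E_2^{2,1}$, and it also makes the module category unique once $\mathrm{res}^{Q_8}_A[k\alpha]=0$. Consequently each admissible pair $\bigl(A,[k\alpha]\bigr)$ with $[k\alpha]\in\Omega(Q_8;A)$ produces exactly one partner $Vect\bigl(K\ltimes_{\widehat F}{{\mathbb{A}}},\widehat\omega\bigr)$, to be read off from the spectral sequences already displayed.

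The second step treats the three branches. For $A=1$ the LHS spectral sequence is concentrated on the edge, so $\Omega(Q_8;1)=H^3(Q_8,\Cx)=\integer/8$, the partner group is $Q_8\ltimes\Hom(1,\Cx)=Q_8$, and $\widehat\omega$ is cohomologous to $\phi^{*}(k\alpha)$ for an isomorphism $\phi\colon Q_8\to G$; since $G\cong Q_8$ and every automorphism of $\integer/8$ fixes $0$ and $4\alpha$, preserves $\{2\alpha,6\alpha\}$ (the order-four elements) and permutes the generators, this branch keeps $G\cong Q_8$ with $\eta$ in the same ``order-block'' $\{0\},\{4\alpha\},\{2\alpha,6\alpha\},\{\alpha,3\alpha,5\alpha,7\alpha\}$ as $k\alpha$. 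For $A=Z$ the $Q_8$-diagram of the present subsection gives $\Omega(Q_8;Z)=\langle 2\alpha\rangle=\{0,2\alpha,4\alpha,6\alpha\}$ with $E_\infty^{3,0}=\langle 4\alpha\rangle$ and $E_\infty^{2,1}\cong\langle 2\alpha\rangle/\langle 4\alpha\rangle$; a class has nonzero image in $E_\infty^{2,1}$ exactly for $k\in\{2,6\}$, in which case $\widehat F=[x^2+xy+y^2]$, whose (central) extension of $(\integer/2)^2$ by $\integer/2$ is $Q_8$, while for $k\in\{0,4\}$ one gets $\widehat F=0$ and partner group $(\integer/2)^2\times\integer/2=(\integer/2)^3$. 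For $A=\integer/4$ the $Q_8$-diagram of \S\ref{subsection nontrivial action} gives $E_\infty^{3,0}=0$, $E_\infty^{2,1}=\langle 4\alpha\rangle$, $E_\infty^{0,3}\cong\integer/4$, so $\Omega(Q_8;\integer/4)=\{0,4\alpha\}$; for $k=0$ the $E^{2,1}$-component vanishes, $\widehat F=0$, and the partner is $\integer/4\rtimes\integer/2=D_8$, whereas for $k=4$ the $E^{2,1}$-component is nonzero, $\widehat F$ is the nontrivial class of $H^2(\integer/2,\widehat{\integer/4})$ for the inversion action, whose extension $\integer/2\ltimes_{\widehat F}\widehat{\integer/4}$ is $Q_8$, and the symmetry of the pair $(\omega,\widehat\omega)$ places $[\widehat\omega]$ at the same filtration level, i.e. $\widehat\omega$ is cohomologous to $4\alpha$.

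Collating the branches proves the three bullets: for $k$ odd the only admissible $A$ is $1$, so $G\cong Q_8$ and $\eta=j\alpha$ with $j$ odd; for $k\in\{2,6\}$ the admissible subgroups are $1$ and $Z$, both giving $G\cong Q_8$ with $\eta\in\{2\alpha,6\alpha\}$; for $k=4$ they are $1$, $Z$, $\integer/4$, giving $G\cong Q_8$ or $(\integer/2)^3$; and for $k=0$ they are $1$, $Z$, $\integer/4$, giving $G\cong Q_8$, $D_8$ or $(\integer/2)^3$. The step I expect to be the crux is confirming that the two quoted $E_2$-pages are complete --- every differential into or out of the first two rows being accounted for, with the surviving orders multiplying to $|H^3(Q_8,\Cx)|=8$ as a consistency check --- and then matching each surviving class $\widehat F\in H^2(K,{{\mathbb{A}}})$ with the isomorphism type of the order-eight group it classifies; this is where the facts ``$[x^2+xy+y^2]$ classifies $Q_8$'' and ``the nontrivial class of $H^2(\integer/2,\widehat{\integer/4})$ classifies $Q_8$'' are used, and it is essential that in the cases $(A,k)=(Z,2),(Z,6),(\integer/4,4)$ the partner is $Q_8$ rather than $D_8$ or $(\integer/2)^3$.
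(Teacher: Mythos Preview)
Your proposal is correct and follows essentially the same route as the paper's proof: enumerate the abelian normal subgroups of $Q_8$, read off from the two LHS spectral sequences already displayed which classes $k\alpha$ lie in $\Omega(Q_8;A)$, and identify the resulting partner group $K\ltimes_{\widehat F}{{\mathbb{A}}}$.

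The differences are organizational rather than substantive. You argue by subgroup $A\in\{1,Z,\integer/4\}$ and then collate over $k$, while the paper argues directly by the value of $k$; you make explicit the point that $H^2(A,\Cx)=0$ for every relevant $A$ (hence uniqueness of $\mu$ and vanishing of $d_2\colon E_2^{0,2}\to E_2^{2,1}$), which the paper leaves implicit; and you include the case $A=1$ explicitly. One small sharpening available to you: the paper records that $\Aut(Q_8)$ acts \emph{trivially} on $H^3(Q_8,\Cx)$, which is stronger than your observation that any automorphism of $\integer/8$ preserves the order of elements. Your weaker statement suffices for the proposition as stated, but the stronger fact lets you conclude that in the $A=1$ branch the partner cocycle is exactly $k\alpha$, not merely in the same order-block.
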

   \begin{proof}
   First note that the action of $Aut(Q_8)$ on $H^3(Q_8,\Cx)$ is trivial.
   Second note that the only normal subgroups of $Q_8$ are its center and the cyclic ones generated by roots of unity and that they
   all fit into the central extension $1 \to \integer/2 \to Q_8 \to (\integer/2)^2 \to 1$ or the non-split extension $1 \to \integer/4 \to Q_8 \to \integer/2 \to 1$ that we have studied before. Since any weak Morita equivalence between pointed fusion categories comes from
   a normal and abelian subgroup of $Q_8$, the classification that we have done before exhausts all possibilities. For $k$ odd
   we know that $k\alpha$ survives to the restriction to the center and to the cyclic subgroups isomorphic to $\integer/4$ and therefore
   $G$ can only be $Q_8$. The classes $2\alpha$ and
   $6\alpha$ trivialize on the center of $Q_8$ but these classes define extensions of $(\integer/2)^2$ by $\integer/2$ which are
   isomorphic to $Q_8$ and define cohomology classes which are precisely $2\alpha$ and $6\alpha$. The class $4\alpha$
   trivializes in all normal and abelian subgroups; in the case of the subgroup $\integer/4$ the only group that may appear is $Q_8$,
    and in the case of the center we may obtain the weak Morita equivalence
    $$Vect(Q_8,4\alpha) \simeq Vect((\integer/2)^3, Sq^1(e(x^2 +xy+ y^2))+x^2y^2).$$
   Finally, the trivial class produces only the group $D_8$ in the case of the subgroup $\integer/4$ and $(\integer/2)^3$ in the case of the center;
   some weak Morita equivalences are
   $$Vect(Q_8,0) \simeq Vect((\integer/2)^3, Sq^1(e(x^2 +xy+ y^2))) \simeq_M  Vect(D_8,b).$$
   
      \end{proof}

\bibliographystyle{abbrv} 

\def\cprime{$'$} \def\cprime{$'$}

\end{document}